\def\eps{\varepsilon}
\def\lam{\lambda}
\def\al{\alpha}
\def\wt{\widetilde}
\def\Q{\mathbb{Q}}
\def\P{\mathbb{P}}
\def\E{{\mathbb{E}}}
\newcommand{\R}{{\mathbb R}}
\newcommand{\F}{{\mathbb F}}
\newcommand{\N}{{\mathbb N}}
\DeclareMathOperator*{\argmax}{arg\,max}
\newcommand{\bd}{\begin{displaymath}}
\newcommand{\ed}{\end{displaymath}}
\newcommand{\be}{\begin{equation}}
\newcommand{\ee}{\end{equation}}
\newcommand{\bq}{\begin{eqnarray}}
\newcommand{\eq}{\end{eqnarray}}
\newcommand{\bn}{\begin{eqnarray*}}
\newcommand{\en}{\end{eqnarray*}}
\newcommand{\dl}{\delta}
\newtheorem{theorem}{Theorem}[section]
\newtheorem{lemma}[theorem]{Lemma}
\newtheorem{proposition}[theorem]{Proposition}
\newtheorem{corollary}[theorem]{Corollary}
\newtheorem{remark}[theorem]{Remark}
\newtheorem{example}[theorem]{Example}
\newtheorem{definition}[theorem]{Definition}
\newtheorem{assumption}[theorem]{Assumption}
\numberwithin{equation}{section}
\date{\today}
\title{Stochastic Graphon Games with Interventions}
\author[]{Eyal Neuman}
\author[]{Sturmius Tuschmann\thanks{ST is supported by the EPSRC Centre for Doctoral Training in Mathematics of Random \mbox{Systems}: Analysis, Modelling and Simulation (EP/S023925/1).}}
\affil[]{Department of Mathematics, Imperial College London}
\begin{document}
\maketitle

\begin{abstract} 
We consider a class of targeted intervention problems in dynamic network and graphon games. First, we study a general dynamic network game in which players interact over a graph and maximize their heterogeneous, concave goal functionals, which depend on both their own actions and their interactions with their neighbors.
We establish the existence and uniqueness of the Nash equilibrium in both the finite-player network game and the corresponding infinite-player graphon game. We also prove the convergence of the Nash equilibrium in the network game to the one in the graphon game, providing explicit bounds on the convergence rate.

Using this framework, we introduce a central planner who implements a dynamic targeted intervention. Given a fixed budget, the planner maximizes the average welfare at equilibrium by perturbing the players' heterogeneous objectives, thereby influencing the resulting Nash equilibrium.
Using a novel fixed-point argument, we prove the existence and uniqueness of an optimal intervention in the graphon setting, and show that it achieves near-optimal performance in large finite networks, again with explicit bounds on the convergence rate.
As an application, we study the special case of linear-quadratic objectives and exploit the spectral decomposition of the graphon operator to derive semi-explicit solutions for the optimal intervention. This spectral approach provides key insights into the design of optimal interventions in dynamic environments.



\end{abstract} 

\begin{description}
\item[Mathematics Subject Classification (2020):] 	91A07, 91A15, 91A43, 93E20 
\item[Keywords:] graphon games, network games, targeted interventions, central planner, Nash equilibrium, stochastic control
\end{description}

\section{Introduction}

Dynamic network games are  non-cooperative games in which players interact strategically through a dynamic system to optimize their objectives,  with their interdependencies specified by a connectivity network modeled as a graph. The players’ actions affect both the evolution of the environment and their own payoffs. These games provide a framework for modeling strategic interactions among agents in competitive real-world systems, including autonomous driving \cite{hang2020human},  real-time bidding \cite{sayedi2018real}, and dynamic production management \cite{leng2005game}. The rigorous mathematical analysis of   dynamic network games is particularly challenging due to the heterogeneity among players and their interactions, the scale of players involved, and the nonlinear feedback effects of players' actions on the environment.

The scaling limits of graph sequences, particularly for dense sequences, have garnered significant attention. \citet{lovasz2006limits} proved that dense graph sequences with converging subgraph densities converge to a natural limit known as a graphon, which is a symmetric measurable function $W:[0,1]^2\to [0,1]$. \citet{borgs2008convergent,borgs2012convergent} further substantiated the role of graphons as the appropriate limiting objects for dense graph sequences and introduced the cut distance, which characterizes the convergence of these sequences (see \citet{lovasz2012large}).  Lovász and Szegedy's seminal work gave rise to the concept of graphon games, which serve as infinite-population approximations to finite-player network games. In these games, a continuum of players interacts according to a graphon $W$, where $W(x,y)$ describes the interaction of the two infinitesimal players $x,y\in [0,1]$. \citet{parise2023graphon} demonstrated that Nash equilibria of static graphon games can approximate Nash equilibria of network games on large graphs, which are sampled from the graphon (see also \cite{carmona2022stochastic, parise2021analysis}). 

Dynamic network games present greater analytical challenges than their static counterparts, as each player's payoff depends not only on the actions of others but also on an evolving system state, whose dynamics are jointly affected by the actions of all players. Dynamic graphon games were first studied by \citet{gao2020linear}, who derived an approximate Nash equilibrium for the corresponding finite-player game on large graphs in the linear-quadratic case (see also \citet{aurell2022stochastic}). These games were studied in greater generality by \citet{bayraktar2023propagation} and \citet{tangpi2024optimal}, who established the convergence of the Nash equilibrium of network games to that of the limiting graphon game using propagation of chaos. Finally, a general class of linear-quadratic stochastic games with heterogeneous interactions, where both the goal functionals are non-Markovian, was solved by \citet{neuman2024stochastic}. 

Although significant progress has been made in analyzing network games and their graphon limits, challenges persist in regulating or intervening in economic behavior across large-scale networks. One major difficulty lies in the intervention problem faced by a central planner, which grows increasingly complex as the network expands. Also, the assumption that the central planner has full knowledge of the network’s structure is often unrealistic, as gathering precise network data can be prohibitively expensive or entirely infeasible due to privacy and proprietary restrictions. For static games, some work has been done in the linear-quadratic case by \citet{galeotti2020targeting}, who decomposed the intervention into principal components of the network and derived the optimal intervention in terms of the associated eigenvalues. A more general framework for interventions in static games with continuously differentiable and concave payoff functions was developed by \citet{parise2023graphon}. 

In this paper, we address the central planner’s intervention problem for a general class of dynamic, non-Markovian games on large networks. The main challenge arises from the heterogeneity of the players’ goal functionals, the network-based nature of their interactions, and the fact that the central planner’s objective depends implicitly on the players’ actions at equilibrium. Below, we outline our main contributions.

\textbf{Existence, uniqueness, and convergence of equilibria in the underlying game. }
We consider a general framework in which players’ utility functionals are heterogeneous, non-Markovian, continuously differentiable, and concave. We derive existence and uniqueness results for the Nash equilibrium in the graphon game (see Theorem~\ref{thm:NE}) and in the network game (see Corollary~\ref{cor:NE-network}). Moreover, we prove the convergence of the equilibrium in the network game to the one in the graphon game, both when a given graph sequence converges to the graphon in the cut norm (see Theorem~\ref{thm:convergence}) and when the graph sequence is sampled from the graphon (see~Theorem \ref{thm:convergence-sampled}), providing explicit bounds on the convergence rate in both cases. As described in Example \ref{example:U} and Remarks~\ref{rem:convergence} and \ref{rem:convergence-sampled}, those results extend the framework of non-Markovian graphon games from \cite{abijaber2023equilibrium,neuman2024stochastic} along with existence, uniqueness, and convergence results from \citet{aurell2022stochastic,bayraktar2023propagation,carmona2022stochastic,gao2020linear,lacker2023label,parise2023graphon,tangpi2024optimal}, and others. 

\textbf{Existence, uniqueness, and convergence of optimal interventions.}
We then turn our attention to the central planner’s intervention problem (see \eqref{eq:network-int-problem}, \eqref{eq:graphon-int-problem}). In this problem, the central planner optimizes the average welfare of all players by modifying their utility functionals, subject to limited resources and under the constraint that the players are at equilibrium. 
In the static formulation of the problem, the existence of an optimal graphon intervention can typically be derived via standard compactness and continuity arguments. By contrast, proving existence in the dynamic setting is considerably more involved, since the sets of admissible actions and interventions now reside in infinite-dimensional Hilbert spaces, remaining closed and bounded but no longer compact. 

In Theorem~\ref{thm:intervention-existence}, we present the first known result on the existence and uniqueness of an optimal intervention in a dynamic framework. We prove this result for the graphon intervention problem~\eqref{eq:graphon-int-problem} using a novel fixed-point argument. The main challenge lies in the fact that the central planner’s intervention influences the players’ actions and vice versa. To account for this interdependence, we introduce a product operator $\boldsymbol{P}$ that maps a pair $(\hat\theta, a)$, consisting of an intervention $\hat\theta$ and an action profile $a$, to the pair $\boldsymbol{P}(\hat\theta, a)$, which consists of the central planner’s best response to $a$ and the players’ best response to $\hat\theta$. By showing that $\boldsymbol{P}$ admits a unique fixed point, we establish the existence and uniqueness of an optimal intervention.
We then provide convergence results which show that the optimal intervention in the network game converges to the optimal intervention in the corresponding graphon game, again both when a given graph sequence converges to the graphon (see Theorem~\ref{thm:intervention-convergence}) and when the graph sequence is sampled from the graphon (see~Corollary~\ref{cor:intervention-convergence-sampled}), with explicit bounds on the convergence rates. These results extend the work of \citet{parise2023graphon} to dynamic games.

\textbf{Semi-explicit optimal interventions in the linear-quadratic setting.}
In the second part of the paper, we focus on a special case where players’ utility functionals are linear-quadratic. Motivated by \citet{galeotti2020targeting}, we solve the network intervention problem \eqref{eq:network-int-problem-LQ} semi-explicitly using principal components (see Theorem~\ref{thm:LQ}), allowing for a detailed characterization of the optimal intervention. We further demonstrate that this analysis becomes particularly valuable in the case of an infinite network, where the dimensionality of the central planner’s problem is substantially reduced. Specifically, the network intervention problem \eqref{eq:network-int-problem-LQ} scales with the population size, while the associated graphon intervention problem \eqref{eq:graphon-int-problem-LQ} can often be solved more efficiently. A key difference between the network and the graphon setting is that, in the infinite-player version, we work with a spectral decomposition of the graphon operator $\boldsymbol{W}$ on $L^2([0,1], \mathbb{R})$, rather than with one of the adjacency matrix $G^N\in [0,1]^{N\times N}$ of the finite network. Unlike in the finite-dimensional case, the spectrum of $\boldsymbol{W}$ may contain either a finite or infinite number of distinct eigenvalues, each with its own multiplicity. By leveraging the spectral properties of $\boldsymbol{W}$, we address these complexities and derive the optimal graphon intervention in semi-explicit form (see Theorem~\ref{thm:LQ-graphon}).

One of the main conclusions from this analysis is that, within each principal component, the optimal interventions are scalings of the underlying heterogeneity processes. This observation offers insights into the similarity between the optimal intervention and the eigenfunctions corresponding to the spectrum of the graphon operator (see Corollary~\ref{corollary:similarity-ratio-graphon}). Moreover, it yields explicit asymptotics for small and large budgets of the central planner (see Proposition~\ref{prop:convergence-graphon}) and demonstrates how these asymptotics approximate the optimal intervention (see Proposition~\ref{prop:convergence-rate-graphon}).

\paragraph{Organization of the paper.} In Section~\ref{sec:underlying-game}, we present the finite-player network game and the corresponding graphon game. We then establish existence and uniqueness results for both games, as well as convergence results that show how the network game equilibrium approximates the graphon game equilibrium. In Section~\ref{sec:interventions}, we introduce the central planner's intervention problem, prove existence and uniqueness of the optimal intervention, and present the corresponding convergence results. Finally, in Section~\ref{sec:LQ}, we present our results for the linear-quadratic case. Sections~\ref{sec:underlying-game-proofs}--\ref{sec:LQ-proofs} are dedicated to the proofs of our main results.

\section{The Underlying Game}\label{sec:underlying-game}
In this section we present a dynamic network game and the corresponding graphon game, where each player seeks to maximize a general utility functional. We derive results on the existence and uniqueness of the Nash equilibrium in both cases, and prove the convergence of the equilibria when the number of players tends to infinity.  
\subsection{The Network Game}\label{subsec:definition-finite-player-game}

Let $T>0$ denote a finite deterministic time horizon and let $N\in \mathbb{N}$ be the number of players in the system. We consider a network represented by a graph with symmetric adjacency matrix $G^N\in [0,1]^{N\times N}$, where $G^N_{ij}$ represents the levels of interaction  between players $i$ and $j$. We fix a filtered probability space $(\Omega, \mathcal F, \mathbb F:=(\mathcal F_t)_{0 \leq t \leq T}, \mathbb P)$ satisfying the usual conditions of right-continuity and completeness, and consider a dynamic stochastic game among the $N$ players. Each player $i \in \{1, \ldots, N\}$ selects their action $a^{i,N}$ from their set of admissible actions $\mathcal A^{i,N}$, which is a subset of
\be\label{eq:A}  
  \mathcal A:=\left\{\tilde{a}:\Omega\times [0,T]\to\R \, \Big| \, \tilde{a} \textrm{ is } \mathbb{F}\textrm{-progressively measurable, } \,\|\tilde a\|^2_{\mathcal{A}}:=\int_0^T \mathbb E\big[\tilde{a}_t^2 \big] dt  <\infty \right\},
\ee
where throughout, variables with a tilde denote elements in $\mathcal{A}$. Let $a^N:=(a^{1,N},\ldots, a^{N,N})$ be the vector of all actions and define the set of admissible action profiles
\be\label{eq:AadN}
\mathcal{A}_{ad}^N:=\prod_{i=1}^N\mathcal{A}^{i,N}.
\ee
Moreover, let $\langle\cdot,\cdot\rangle_{\mathcal{A}}$ denote the corresponding inner product on $\mathcal{A}$ given by
\be\label{eq:inner-product-A}
\big\langle  \tilde a,  \tilde b\big\rangle_{\mathcal{A}}:=\E\left[\int_0^T \tilde a_t \tilde b_tdt\right],\quad \tilde a, \tilde b\in\mathcal{A}.
\ee
Analogously, denote by  $\left\langle\cdot,\cdot\right\rangle_{\mathcal{A}^N}$ and $\|\cdot\|_{\mathcal{A}^N}$ the inner product and induced norm on $\mathcal{A}^N$. Consider a universal utility functional 
\be \label{eq:U}
U:\mathcal{A}\times\mathcal{A}\times\mathcal{A}\to\R.
\ee
Each player $i \in \{1, \ldots, N\}$ seeks to maximize their individual utility functional on $\mathcal{A}^{i,N}$ given by
\be \label{eq:Ui}
a^{i,N}\mapsto U\left(a^{i,N},z^{i,N}(a^N),\theta^{i,N}\right),
\ee
where the local aggregate
\be \label{eq:z^{i,N}}
z^{i,N}(a^N):=\frac{1}{N}\sum_{j=1}^NG^N_{ij}a^{j,N}
\ee
is defined as a weighted average of the other player's actions computed according to the heterogeneous interaction weights of the network $G^N$. The stochastic process $\theta^{i,N}\in\mathcal{A}$ incorporates heterogeneity in the utility functionals of different players. Denote by ${\theta}^N\in \mathcal{A}^N$ the vector of all heterogeneity processes. In line with \cite{parise2023graphon}, we denote this network game by $\mathcal{G}(\mathcal A_{ad}^N,U,\theta^N,G^N)$.
\begin{definition} \label{def:Nash-finite-player}
A vector of actions $\bar a^N\in\mathcal{A}_{ad}^N$ is called a Nash equilibrium of the network game $\mathcal{G}(\mathcal A_{ad}^N,U,\theta^N,G^N)$ if for every $i\in \{1,\ldots, N\}$ the action $\bar a^{i,N}$ satisfies 
\be \label{obj-n}
\bar a^{i,N}=\argmax_{\tilde {a}\in\mathcal{A}^{i,N}} \ U\left(\tilde{a}, z^{i,N}(\bar a^N),\theta^{i,N}\right).
\ee
\end{definition}

\begin{example}\label{example:U}
Notice that our formulation does not include state processes, and that the utility functional $U$ from \eqref{eq:U} takes as inputs stochastic processes in $\mathcal{A}$. This makes it quite versatile, including the following examples:
\begin{itemize}
\item[(i)] A simple class of utility functionals is given by 
$$
U(\tilde a,\tilde z,\tilde\theta)= \E\left[\int_0^T f_U(\tilde a_t,\tilde z_t,\tilde\theta_t)dt \right],
$$
where $f_U:\R^3\to\R$ is a function incorporating a running cost. In particular, this class includes the dynamic formulation of the static game studied by \citet{galeotti2020targeting}, which we will analyze in detail in Section~\ref{sec:LQ}.
\item[(ii)] Another example is the class of linear-quadratic utility functionals given by 
$$
U(\tilde a,\tilde z,\tilde\theta)=\langle\tilde a,\boldsymbol{A}_1\tilde a\rangle_\mathcal{A}+\langle\tilde a,\boldsymbol{A}_2\tilde z\rangle_\mathcal{A}+\langle\tilde z,\boldsymbol{A}_2\tilde a\rangle_\mathcal{A}+\langle\tilde z,\boldsymbol{A}_3\tilde z\rangle_\mathcal{A}+\langle\tilde a,\tilde \theta\rangle_\mathcal{A}+\langle\tilde z, \eta^*\rangle_\mathcal{A},
$$
where $\boldsymbol{A}_1,\boldsymbol{A}_2,\boldsymbol{A}_3$ are Volterra operators on $L^2([0,T],\R)$ and $\eta^*\in\mathcal{A}$ is a process incorporating common noise. Notably, this class allows for non-Markovian dynamics and was studied in detail in \cite{abijaber2023equilibrium,neuman2024stochastic}.
\item[(iii)] Given square-integrable Volterra kernels $K,L:[0,T]^2\to\R$ and a family of processes $(\eta^{i,N})_{i=1}^N\in\mathcal{A}$, assume that the players have state processes $X^{i,N}$ and local aggregate state processes $Z^{i,N}$ with linear dynamics given by 
\be\begin{aligned}\label{eq:Xi-Zi}
X^{i,N}_t&=\int_0^t K(t,s)X^{i,N}_sds+\int_0^t L(t,s)a^{i,N}_sds+\eta^{i,N}_t,\\
Z^{i,N}_t&=\frac{1}{N}\sum_{j=1}^N G_{ij}^NX^{j,N}_t,\quad\  i=1,\ldots, N.
\end{aligned}\ee
Then, the dynamics \eqref{eq:Xi-Zi} have a unique explicit solution given by
\be\begin{aligned}\label{eq:Xi-Zi-resolved}
X_t^i&=\int_0^t \left(L(t,s)-\big(R\star L\big)(t,s)\right)a^{i,N}_sds-\int_0^t R(t,s)\eta^{i,N}_sds+\eta^{i,N}_t,\\
Z_t^i&=\int_0^t \left(L(t,s)-\big(R\star L\big)(t,s)\right)z^{i,N}_s(a^N)ds-\frac{1}{N}\sum_{j=1}^NG_{ij}^N\left(\int_0^t R(t,s)\eta^{j,N}_sds-\eta^{j,N}_t\right),
\end{aligned}\ee
where $R:[0,T]^2\to\R$ denotes the resolvent of the kernel $(-K)$ and the $\star$-product $(R\star L)(t,s)$ is given by $\smash{\int_0^T R(t,u) L(u,s)du}$ (see \cite{gripenberg1990volterra}, Chapter~9.3, Theorem~3.6). Consider goal functionals of the classical form 
\be\label{eq:J^{i,N}}
J^{i,N}(a^{i,N})=\E\left[\int_0^Tf_J(t,X^{i,N}_t,Z^{i,N}_t, a^{i,N}_t)dt+g_J(X^{i,N}_T,Z^{i,N}_T)\right],
\ee
for a running cost $f_J:\R^4\to\R$ and a terminal cost $g_J:\R^2\to\R$. Then, using \eqref{eq:Xi-Zi-resolved}, the functionals $J^{i,N}$ in \eqref{eq:J^{i,N}} have an explicit representation $U\left(a^{i,N},z^{i,N}(a^N),\theta^{i,N}\right)$ for a universal function $U:\mathcal{A}^3\to\R$ as in \eqref{eq:U}, \eqref{eq:Ui} and suitably chosen heterogeneity processes $(\theta^{i,N})_{i=1}^N$. This shows that our framework aligns with the formulations of \citet{aurell2022stochastic,lacker2023label,tangpi2024optimal} and others, who study cost functionals which include state processes.
\end{itemize}
\end{example}

\subsection{The Graphon Game}\label{subsec:definition-infinite-player-game}
In line with \cite{aurell2022stochastic,neuman2024stochastic,tangpi2024optimal}, the corresponding infinite-player graphon game will be modeled by the following setup. We label the players amid the unit interval by $x\in [0,1]$. 
Let $\mathcal{B}_{[0,1]}$ be the Borel $\sigma$-algebra of $[0,1]$, and $\nu_{[0,1]}$ denote the Lebesgue measure on $[0,1]$. Let $(\Omega,\mathcal{F},\P)$ be the sample space and $([0,1],\mathcal{I},\nu)$ be a probability space that extends the Lebesgue measure space $([0,1],\mathcal{B}_{[0,1]},\nu_{[0,1]})$ as in Theorem~1 in \cite{sun2009individual}. 
We will consider a rich Fubini extension $([0,1]\times\Omega,\mathcal{I}\boxtimes \mathcal{F},\nu\boxtimes\P)$ of the standard product space $([0,1]\times\Omega,\mathcal{I}\otimes \mathcal{F},\nu\otimes\P)$ (see \cite{sun2006exact,sun2009individual} for an overview). Namely, by Theorem~1 in \cite{sun2009individual}, it is possible to define $\mathcal{I  }\boxtimes \mathcal{F}$-measurable processes $\theta: [0,1]\times\Omega \to L^2([0,T],\R)$ such that the random variables $(\theta^x)_{x\in [0,1]}$ are essentially pairwise independent, and for each $x\in [0,1]$, the process $\theta^x=(\theta_t^x)_{0\leq t\leq T}$ is a stochastic process in $L^2(\Omega\times [0,T],\R)$. Here, the processes $(\theta^x)_{x\in [0,1]}$ have to satisfy the technical condition that the map from $([0,1],\mathcal{I},\nu)$ to the space of Borel probability measures on $L^2([0,T],\R)\times\R$ which assigns to each $x\in [0,1]$ the distribution of $\theta^x$ is measurable. Throughout the rest of this subsection, denote by $\F\vcentcolon=(\mathcal{F}_t)_{0\leq t\leq T}$ the augmentation of the filtration generated by $(\theta^x)_{x\in [0,1]}$.
\begin{remark}
By Definition~2.2 in \cite{sun2006exact}, the Fubini property holds on the rich Fubini extension $([0,1]\times\Omega,\mathcal{I}\boxtimes \mathcal{F},\nu\boxtimes\P)$. That is, for any $\nu\boxtimes\P$-integrable function $f:[0,1]\times\Omega\to\R$ it holds that
\be
\int_{[0,1]\times\Omega}f(x,\omega)(\nu\boxtimes\P)(dx,d\omega)=\int_0^1\E[f(x)]\nu(dx)=\E\Big[\int_0^1f(x)\nu(dx)\Big].
\ee
Also see Lemma~2.3 in \cite{sun2006exact} for a generalized Fubini property. 
We usually denote $\nu(dx)=dx$ for ease of notation and tacitly employ the Fubini property without referring to this remark again.
\end{remark}

\begin{definition}\label{def:action-profile}
An action profile is a family $a=(a^x)_{x\in [0,1]}$ of actions $a^x\in\mathcal{A}$ such that the map $(x,t,\omega)\to a_t^x(\omega)$ is $\mathcal{I}\otimes\mathcal{B}([0,T])\otimes\mathcal{F}$-measurable.
Define the set of feasible action profiles as
\be\label{eq:Ainfty}
\mathcal{A}^\infty\vcentcolon=\left\{a-\textrm{action profile}\,\Big| \,\|a\|^2_{\mathcal{A}^\infty}:=\int_0^1\int_0^T\E\big[(a_t^x)^2\big]dtdx<\infty\right\},
\ee
and denote by $\langle\cdot,\cdot\rangle_{\mathcal{A}^\infty}$ the corresponding inner product.  
\end{definition}
We consider a dynamic stochastic game among a continuum of players, where each player $x\in [0,1]$ selects an action $a^x$  from their set of admissible actions $\mathcal{A}^x\subset\mathcal{A}$. 
Define the set of admissible action profiles as
\be\label{eq:Aadinfty}
\mathcal{A}_{ad}^\infty:=\left\{a\in\mathcal{A}^\infty\Big| a^x\in\mathcal{A}^x\textrm{ for all }x\in [0,1]\right\}.
\ee
While the local aggregate in the network game is defined in terms of the adjacency matrix $G^N$, a natural way to define interactions among infinitely many players is through a graphon, that is, a symmetric and measurable function $W:[0,1]^2\to[0,1]$. Here, $W(x,y)$ denotes the level of interaction between players $x$ and $y$. Given a graphon $W$ from the set of graphons
$$
\mathcal{W}_0:=\left\{W:[0,1]^2\to[0,1]\mid W\textrm{ is symmetric and measurable}\right\},
$$
define the interaction effects experienced by player $x$ as the local aggregate
\be \label{eq:graphon-operator}
z^x(a):=(\boldsymbol{W}a)(x):=\int_0^1W(x,y)a^ydy.
\ee
Here, $\boldsymbol{W}$ denotes the bounded linear operator on $L^2([0,1],\R)$ induced by the graphon $W$. Similar to the network game, the utility functional of player $x$ on $\mathcal{A}^x$ is given by 
\be \label{eq:Ux}
a^x\mapsto U\left(a^x,z^x(a),\theta^x\right),
\ee
where $\theta^x\in\mathcal{A}$ is a heterogeneity process such that $\theta\in\mathcal{A}^\infty$. 
We note that the utility functional \eqref{eq:Ux} in the graphon game has the same structure as the utility functional \eqref{eq:Ui} in the network game, as they both contain the same universal functional $U$ from \eqref{eq:U}. We denote the graphon game by $\mathcal{G}(\mathcal{A}_{ad}^\infty,U,\theta,W)$.
\begin{definition} \label{def:Nash-infinite-player}
An admissible action profile $(\bar{a}^{x})_{x\in [0,1]}\in\mathcal{A}_{ad}^\infty$ is called a Nash equilibrium of the graphon game $\mathcal{G}(\mathcal{A}_{ad}^\infty,U,\theta,W)$ if for every $x\in [0,1]$ the action $\bar a^x$ satisfies 
\be \label{graph-game}
\bar a^x=\argmax_{\tilde {a}\in\mathcal{A}^x} \ U\left(\tilde{a}, z^x(\bar a),\theta^x\right).
\ee
\end{definition}
 
\subsection{Correspondence between Network Games and Graphon Games}\label{subsec:correspondence}
For every $N\in\N$, assume that the augmentation of the filtration generated by $\{\theta^{i,N}\}_{i=1}^N$ from Section~\ref{subsec:definition-finite-player-game} is contained in the filtration $\F$ from Section~\ref{subsec:definition-infinite-player-game}.
We now demonstrate that any network game can be reformulated as a graphon game. In the network game, a Nash equilibrium is an $N$-tuple of processes in $\mathcal{A}$, whereas in the graphon game, a Nash equilibrium is an element in $\mathcal{A}^\infty$. To compare these two objects we introduce the uniform partition of $[0,1]$ given by
\be \label{eq:partition}
\mathcal{P}^N=\{\mathcal{P}_1^N,\ldots,\mathcal{P}_N^N\}, \quad
\mathcal{P}_i^N:=\begin{cases}
    [\tfrac{i-1}{N},\tfrac{i}{N}),\phantom{\tfrac{N-1}{N},1]}\hspace{-6mm}\textrm{for } 1\leq i\leq N-1,\\
    [\tfrac{N-1}{N},1],\phantom{\tfrac{i-1}{N},\tfrac{i}{N})}\hspace{-6mm}\textrm{for } i= N.
\end{cases}
\ee
The idea is to pair each player $i$ in the network game with the interval $\mathcal{P}_i^N\subset [0,1]$. Namely, for a family $a^N\in\mathcal{A}^N$ of actions $a^{i,N}\in \mathcal{A}$, define the corresponding step function action profile $a^{N}_{\operatorname{step}}\in \mathcal{A}^\infty$ by
\be\label{eq:step-function}
a^{x,N}_{\operatorname{step}}:=a^{i,N},\quad \textrm{for all } x\in\mathcal{P}_i^N,\  i=1,\ldots, N.
\ee
Similarly, the partition $\mathcal{P}^N$ allows to define for any network with symmetric adjacency matrix $G^N\in [0,1]^{N\times N}$ a corresponding step graphon $W_{G^N}:[0,1]^2\to [0,1]$ given by
\be \label{eq:step-graphon}
W_{G^N}(x,y)\vcentcolon=G_{ij}^N,\quad\textrm{for all } \,(x,y)\in\mathcal{P}_i^N\times\mathcal{P}_j^N,\ i,j=1,\ldots N.
\ee
\begin{proposition}\label{prop:correspondence}
A vector of actions $\bar a^N\in\mathcal{A}^N$ is a Nash equilibrium of the network game $\mathcal{G}(\mathcal A_{ad}^N,U,\theta^N,G^N)$ if and only if the corresponding step function action profile $a^{N}_{\operatorname{step}}\in \mathcal{A}^\infty$ defined in \eqref{eq:step-function} is a Nash equilibrium of the graphon game $\smash{\mathcal{G}(\mathcal{A}_{ad}^\infty,U,\theta^N_{\operatorname{step}},W_{G^N})}$ with action sets $\mathcal{A}^x:=\mathcal{A}^{i,N}$ for all $x\in\mathcal{P}_i^N$, step function heterogeneity profile $\smash{\theta^{N}_{\operatorname{step}}}$ corresponding to $\theta^N$ as in \eqref{eq:step-function}, and underlying step graphon $W_{G^N}$ corresponding to $G^N$ as in \eqref{eq:step-graphon}.
\end{proposition}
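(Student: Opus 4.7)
The plan is to reduce the graphon-game Nash condition for the step profile directly to the network-game Nash condition at each player via a single computation that matches the local aggregates on each partition cell $\mathcal{P}_i^N$.

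First, I would establish the key identity that for any $a^N \in \mathcal{A}^N$ and any $x \in \mathcal{P}_i^N$,
$$
z^x(a^N_{\operatorname{step}}) = \int_0^1 W_{G^N}(x,y)\, a^{y,N}_{\operatorname{step}}\, dy = \sum_{j=1}^N G_{ij}^N \, a^{j,N}\, \nu_{[0,1]}(\mathcal{P}_j^N) = \frac{1}{N}\sum_{j=1}^N G_{ij}^N a^{j,N} = z^{i,N}(a^N),
$$
where I use the step-graphon definition \eqref{eq:step-graphon}, the step-profile definition \eqref{eq:step-function}, and the fact that $\nu_{[0,1]}(\mathcal{P}_j^N) = 1/N$. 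In particular, $z^x(a^N_{\operatorname{step}})$ is constant in $x$ on each cell $\mathcal{P}_i^N$ and equals the $i$-th network-game local aggregate. An identical step-function identity gives $\theta^x = \theta^{i,N}$ for $x \in \mathcal{P}_i^N$ by the construction of $\theta^N_{\operatorname{step}}$.

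Second, I would verify that the correspondence preserves admissibility. Since $a^{x,N}_{\operatorname{step}} = a^{i,N}$ for $x \in \mathcal{P}_i^N$ with $\mathcal{A}^x := \mathcal{A}^{i,N}$, the pointwise constraint $a^x \in \mathcal{A}^x$ is equivalent to $a^{i,N} \in \mathcal{A}^{i,N}$ for all $i$. The joint $\mathcal{I}\otimes\mathcal{B}([0,T])\otimes\mathcal{F}$-measurability of the step profile is immediate because it is piecewise constant on the finite measurable partition $\mathcal{P}^N$, and the integrability $\|a^N_{\operatorname{step}}\|^2_{\mathcal{A}^\infty} = \frac{1}{N}\sum_{i=1}^N \|a^{i,N}\|^2_{\mathcal{A}}$ follows by Fubini, so $a^N \in \mathcal{A}^N_{ad}$ if and only if $a^N_{\operatorname{step}} \in \mathcal{A}^\infty_{ad}$.

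Third, I would conclude by matching the argmax conditions. Combining the previous steps, for any $x \in \mathcal{P}_i^N$ the graphon-game best-response condition \eqref{graph-game} at $x$ reads
$$
\bar a^{i,N} \;=\; \bar a^{x,N}_{\operatorname{step}} \;=\; \argmax_{\tilde a \in \mathcal{A}^{i,N}} U\!\left(\tilde a,\, z^{i,N}(\bar a^N),\, \theta^{i,N}\right),
$$
which is exactly the network-game Nash condition \eqref{obj-n} at player $i$. Since the graphon condition is the same identity for every $x \in \mathcal{P}_i^N$, both directions of the equivalence follow at once. I do not anticipate any substantive obstacle: the whole argument reduces to the local-aggregate identity above, and the only care needed is the measurability bookkeeping, which is straightforward because the step profile is constant on each cell of a finite measurable partition.
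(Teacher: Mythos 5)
Your proof is correct and rests on the same key computation as the paper's, namely that $z^x(a^N_{\operatorname{step}})=\tfrac{1}{N}\sum_j G^N_{ij}a^{j,N}=z^{i,N}(a^N)$ for $x\in\mathcal{P}^N_i$, combined with matching the argmax conditions cell by cell. The only presentational difference is that the paper starts from an arbitrary Nash equilibrium of the step-graphon game and first argues it must itself be a step function (a mild bonus observation not needed for the stated biconditional), whereas you work directly with the given pair $(\bar a^N,\bar a^N_{\operatorname{step}})$, which is cleaner for the statement as phrased.
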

The proof of Proposition~\ref{prop:correspondence} is given in Section~\ref{sec:underlying-game-proofs}.
\subsection{Equilibrium Results for the Underlying Game}
Motivated by \citet{parise2023graphon}, we focus on continuously differentiable and strongly concave utility functionals to derive Nash equilibrium properties of the network game and the graphon game. Since our goal is to study interventions by a central planner, we are particularly interested in the existence of unique Nash equilibria.

\begin{assumption}\label{assum:U}
For all $\tilde z,\tilde\theta\in\mathcal{A}$, the utility functional $U(\tilde a,\tilde z,\tilde \theta)$ in \eqref{eq:U} is continuously G\^ateaux differentiable in $\tilde a$ and strongly concave in $\tilde a$ with strong concavity constant $\alpha_U>0$, that is, $\smash{U(\tilde a,\tilde z,\tilde \theta)+\frac{\alpha_U}{2}\|\tilde a\|^2_{\mathcal{A}}}$ is concave in $\tilde a$. Moreover, $\smash{\nabla_{\tilde a} U(\cdot,\tilde z,\tilde \theta)}$ is Lipschitz continuous in $\tilde z,\tilde\theta$ with constants $\ell_U,\ell_\theta$. 
\end{assumption}

\begin{assumption}\label{assum:graphon}
For each $x\in [0,1]$, the set of admissible actions $\mathcal{A}^x$ from \eqref{eq:Aadinfty} is nonempty, convex, and closed. Moreover, there exist $\tilde z_0, \tilde\theta_0 \in\mathcal{A}$ such that 
$$\big(\argmax_{\tilde{a}\in \mathcal{A}^x} U(\tilde{a}, \tilde z_0,\tilde\theta_0)\big)_{x\in [0,1]}\in\mathcal{A}^\infty.$$
\end{assumption}

For a graphon $W\in\mathcal{W}_0$, denote by $\lambda_{1}(\boldsymbol{W})$ the largest eigenvalue of the graphon operator $\boldsymbol{W}$ from \eqref{eq:graphon-operator}. Note that $\lambda_{1}(\boldsymbol{W})\in [0,1]$ (see \cite{avella2018centrality}, Lemma~1 and \cite{lovasz2006limits}, Chapter~7.5, equation~7.20).

\begin{theorem}\label{thm:NE}
Assume that Assumptions~\ref{assum:U} and \ref{assum:graphon} are satisfied.
\begin{itemize}
\item[(i)]  Fix a graphon $W\in\mathcal{W}_0$. If $\ell_U\cdot\lambda_{1}(\boldsymbol{W})<\alpha_U$, the graphon game $\mathcal{G}(\mathcal{A}_{ad}^\infty,U,\theta,W)$ admits a unique Nash equilibrium.
\item[(ii)]  Consequently, if $\ell_U<\alpha_U$, the graphon game $\mathcal{G}(\mathcal{A}_{ad}^\infty,U,\theta,W)$ admits a unique Nash equilibrium for every graphon $W\in\mathcal{W}_0$.
\end{itemize}
\end{theorem}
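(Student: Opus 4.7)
The plan is to prove (i) via the Banach fixed-point theorem applied to a best-response operator on the Hilbert space $\mathcal{A}^\infty$; part (ii) then follows immediately from the fact that $\lambda_1(\boldsymbol{W})\in[0,1]$ for every $W\in\mathcal{W}_0$. For each $x\in[0,1]$ and each $(\tilde z,\tilde\theta)\in\mathcal{A}\times\mathcal{A}$, strong concavity and continuous G\^ateaux differentiability of $U$ in $\tilde a$ together with the nonemptiness, closedness and convexity of $\mathcal{A}^x$ yield a unique maximizer
$$BR^x(\tilde z,\tilde\theta):=\argmax_{\tilde a\in\mathcal{A}^x} U(\tilde a,\tilde z,\tilde\theta).$$
A fixed point of the best-response operator $\mathcal{T}:\mathcal{A}^\infty\to\mathcal{A}^\infty$ defined by $\mathcal{T}(a)^x:=BR^x\bigl((\boldsymbol{W}a)(x),\theta^x\bigr)$ is, by Definition~\ref{def:Nash-infinite-player}, exactly a Nash equilibrium.

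The first step is a pointwise Lipschitz bound on $BR^x$. Writing down the first-order optimality inequalities characterizing $BR^x(\tilde z_i,\tilde\theta_i)$ for $i=1,2$, adding them, and invoking $\alpha_U$-strong monotonicity of $-\nabla_{\tilde a}U$ in $\tilde a$ together with $\ell_U$-Lipschitz continuity of $\nabla_{\tilde a}U$ in $\tilde z$ and $\ell_\theta$-Lipschitz continuity in $\tilde\theta$, one arrives at
$$\|BR^x(\tilde z_1,\tilde\theta_1)-BR^x(\tilde z_2,\tilde\theta_2)\|_{\mathcal{A}}\leq \frac{\ell_U}{\alpha_U}\|\tilde z_1-\tilde z_2\|_{\mathcal{A}}+\frac{\ell_\theta}{\alpha_U}\|\tilde\theta_1-\tilde\theta_2\|_{\mathcal{A}}.$$

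The main step, and the one I expect to be the principal obstacle, is to identify the operator norm of $\boldsymbol{W}$ as it acts on $\mathcal{A}^\infty\cong L^2([0,1];\mathcal{A})$ via $(\boldsymbol{W}a)(x)=\int_0^1 W(x,y)a^y\,dy$. Minkowski's integral inequality gives the pointwise bound $\|(\boldsymbol{W}a)(x)\|_{\mathcal{A}}\leq \int_0^1 W(x,y)\|a^y\|_{\mathcal{A}}\,dy$, so applying the scalar $L^2([0,1],\R)$ operator-norm bound for $\boldsymbol{W}$ to the nonnegative function $y\mapsto\|a^y\|_{\mathcal{A}}$ yields $\|\boldsymbol{W}a\|_{\mathcal{A}^\infty}\leq \lambda_1(\boldsymbol{W})\|a\|_{\mathcal{A}^\infty}$. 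Combined with the Lipschitz bound above, this delivers the contraction estimate
$$\|\mathcal{T}(a)-\mathcal{T}(b)\|_{\mathcal{A}^\infty}\leq \frac{\ell_U}{\alpha_U}\|\boldsymbol{W}(a-b)\|_{\mathcal{A}^\infty}\leq \frac{\ell_U\,\lambda_1(\boldsymbol{W})}{\alpha_U}\|a-b\|_{\mathcal{A}^\infty}.$$

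To conclude, well-posedness of $\mathcal{T}:\mathcal{A}^\infty\to\mathcal{A}^\infty$ is verified by bounding $\|\mathcal{T}(a)\|_{\mathcal{A}^\infty}\leq \|\mathcal{T}(a)-BR(\tilde z_0,\tilde\theta_0)\|_{\mathcal{A}^\infty}+\|BR(\tilde z_0,\tilde\theta_0)\|_{\mathcal{A}^\infty}$, controlling the first summand via the joint Lipschitz estimate (using $\boldsymbol{W}a\in\mathcal{A}^\infty$ and $\theta\in\mathcal{A}^\infty$) and the second via Assumption~\ref{assum:graphon}; joint $\mathcal{I}\otimes\mathcal{B}([0,T])\otimes\mathcal{F}$-measurability of $\mathcal{T}(a)$ follows from the continuity of $BR^x$ in $(\tilde z,\tilde\theta)$. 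Under the hypothesis $\ell_U\,\lambda_1(\boldsymbol{W})<\alpha_U$ of (i), the Banach fixed-point theorem on the complete Hilbert space $\mathcal{A}^\infty$ then gives a unique fixed point, i.e.\ the unique Nash equilibrium of $\mathcal{G}(\mathcal{A}_{ad}^\infty,U,\theta,W)$; part (ii) is an immediate corollary.
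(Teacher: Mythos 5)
Your proof is correct and follows essentially the same route as the paper: you form the best-response operator $\mathcal{T}=\boldsymbol{B}_\theta\boldsymbol{W}$, derive the pointwise Lipschitz bound on the best response via the variational-inequality/strong-monotonicity argument (the paper's Lemma~\ref{lemma:B_theta}(i), proved through Lemma~\ref{lemma:VI}), lift the $L^2([0,1],\R)$ operator-norm bound $\lambda_1(\boldsymbol{W})$ to $\mathcal{A}^\infty$, verify $\mathcal{T}(\mathcal{A}^\infty)\subset\mathcal{A}^\infty$ as in Lemma~\ref{lemma:B_theta}(ii), and invoke Banach's fixed-point theorem, with part (ii) following from $\lambda_1(\boldsymbol{W})\in[0,1]$. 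Your Minkowski-integral-inequality step spelling out why $\|\boldsymbol{W}a\|_{\mathcal{A}^\infty}\leq\lambda_1(\boldsymbol{W})\|a\|_{\mathcal{A}^\infty}$ for $\mathcal{A}$-valued profiles is a detail the paper leaves implicit (it simply writes $\|\boldsymbol{W}\|_{\operatorname{op}}$), and making it explicit is a worthwhile clarification.
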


The proof of Theorem~\ref{thm:NE} is given in Section~\ref{sec:underlying-game-proofs}. By means of Proposition~\ref{prop:correspondence}, we can obtain an analogous result for network games.

\begin{assumption}\label{assum:network}
For each $i\in \{1,\ldots, N\}$, the set of admissible actions $\mathcal{A}^{i,N}$ from \eqref{eq:AadN} is nonempty, convex, and closed.
\end{assumption}
For a symmetric matrix $G^N\in[0,1]^{N\times N}$, denote by $\lambda_{1}(G^N)$ its largest eigenvalue.

\begin{corollary}
\label{cor:NE-network}
Let Assumptions~\ref{assum:U} and \ref{assum:network} be satisfied.
\begin{itemize}
    \item[(i)]  Fix a graph with symmetric adjacency matrix $G^N\in[0,1]^{N\times N}$. If $\ell_U\cdot\lambda_{1}(G^N)<\alpha_U \cdot N$, the network game $\mathcal{G}(\mathcal A_{ad}^N,U,\theta^N,G^N)$ admits a unique Nash equilibrium.
    \item[(ii)]  Consequently, if $\ell_U<\alpha_U$, the network game $\mathcal{G}(\mathcal A_{ad}^N,U,\theta^N,G^N)$ admits a unique Nash equilibrium for every graph with symmetric adjacency matrix $G^N\in[0,1]^{N\times N}$.
\end{itemize}
\end{corollary}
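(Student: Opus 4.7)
The plan is to derive this corollary directly from Theorem~\ref{thm:NE} by invoking the correspondence from Proposition~\ref{prop:correspondence}. I would reformulate the network game $\mathcal{G}(\mathcal{A}_{ad}^N, U, \theta^N, G^N)$ as the graphon game $\mathcal{G}(\mathcal{A}_{ad}^\infty, U, \theta^N_{\operatorname{step}}, W_{G^N})$ with cellwise action sets $\mathcal{A}^x := \mathcal{A}^{i,N}$ for $x \in \mathcal{P}_i^N$, and verify that Assumption~\ref{assum:graphon} holds for it: Assumption~\ref{assum:network} gives the nonemptiness, convexity, and closedness of the step action sets, while the strong concavity of $U$ in Assumption~\ref{assum:U} produces a unique argmax in each $\mathcal{A}^{i,N}$ for any $\tilde z_0,\tilde\theta_0\in\mathcal{A}$; assembling these argmaxes into a step profile on the finite partition yields the required element of $\mathcal{A}^\infty$.

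The crux of the argument is the spectral identity
\[
\lambda_1(\boldsymbol{W}_{G^N}) = \tfrac{1}{N}\lambda_1(G^N).
\]
For a step function $f$ with $f|_{\mathcal{P}_j^N} = f_j$, the definition \eqref{eq:step-graphon} gives $(\boldsymbol{W}_{G^N} f)(x) = \tfrac{1}{N}\sum_{j=1}^N G^N_{ij} f_j$ for $x\in\mathcal{P}_i^N$, so the step-function subspace is invariant and $\boldsymbol{W}_{G^N}$ acts on it as $\tfrac{1}{N} G^N$. Conversely, any $h \in L^2([0,1],\R)$ orthogonal to this subspace satisfies $\int_{\mathcal{P}_j^N} h\, dy = 0$ for each $j$, which immediately forces $\boldsymbol{W}_{G^N} h \equiv 0$. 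Hence the nonzero spectrum of $\boldsymbol{W}_{G^N}$ coincides with that of $\tfrac{1}{N} G^N$, and since $G^N$ is symmetric with non-negative entries, Perron--Frobenius gives $\lambda_1(G^N)\ge 0$ and the displayed identity.

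For part (i), the hypothesis $\ell_U \cdot \lambda_1(G^N) < \alpha_U \cdot N$ is equivalent to $\ell_U \cdot \lambda_1(\boldsymbol{W}_{G^N}) < \alpha_U$, so Theorem~\ref{thm:NE}(i) produces a unique Nash equilibrium $\bar a \in \mathcal{A}_{ad}^\infty$ of the step graphon game. Because $\theta^{x,N}_{\operatorname{step}}$, $\mathcal{A}^x$, and the local aggregate $z^x(\bar a) = (\boldsymbol{W}_{G^N}\bar a)(x)$ are all constant on each $\mathcal{P}_i^N$, the best-response argmax in \eqref{graph-game} depends only on the cell containing $x$; consequently $\bar a$ is necessarily of step-function type, and Proposition~\ref{prop:correspondence} translates it into a unique Nash equilibrium of the network game. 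For part (ii), the bound $\lambda_1(G^N)\le N$ follows from the maximum-row-sum estimate for symmetric matrices with entries in $[0,1]$, so $\ell_U<\alpha_U$ implies the hypothesis of part (i) for every such $G^N$.

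The main obstacle is the spectral identification $\lambda_1(\boldsymbol{W}_{G^N}) = \tfrac{1}{N}\lambda_1(G^N)$; this is the only point that is not an immediate invocation of Proposition~\ref{prop:correspondence} and Theorem~\ref{thm:NE}, and I do not expect any conceptual difficulty beyond the standard identification of step-graphon operators with their underlying matrices.
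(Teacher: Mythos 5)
Your proof is correct and follows essentially the same route as the paper: reduce to the step-graphon game via Proposition~\ref{prop:correspondence}, identify $\lambda_1(\boldsymbol{W}_{G^N})=\lambda_1(G^N)/N$, and invoke Theorem~\ref{thm:NE}. The only difference is cosmetic — you derive the spectral identity directly from the step-graphon structure, whereas the paper simply cites Proposition~3 of \citet{gao2019spectral}; your self-contained argument (step functions form an invariant subspace on which $\boldsymbol{W}_{G^N}$ acts as $\tfrac{1}{N}G^N$, and the orthogonal complement is killed) is sound.
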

The proof of Corollary~\ref{cor:NE-network} is given in Section~\ref{sec:underlying-game-proofs}.

\subsection{Convergence Results for the Underlying Game}\label{subsec:convergence}

In this section, we address the convergence of the network game from Section~\ref{subsec:definition-finite-player-game} to the graphon game from Section~\ref{subsec:definition-infinite-player-game}, as the number of players approaches infinity. In previous work on graphon games, there have mainly been two approaches to this problem. First, one can start from a sequence of weighted graphs with adjacency matrices $(G^N)_{N\in\N}$ that converge to a graphon $W$ in a suitable sense, and prove the convergence of the corresponding equilibria (see \cite{bayraktar2023propagation,carmona2022stochastic,gao2020linear,lacker2023label,neuman2024stochastic,tangpi2024optimal}, among others). Second, one can fix a graphon game with underlying graphon $W$, sample from $W$ either weighted or simple graphs and thereby corresponding network games, and then show the convergence of the sampled network game equilibria to the graphon game equilibrium (see \cite{aurell2022stochastic,carmona2022stochastic,neuman2024stochastic,parise2023graphon}). While the first approach is more general and intuitive, the second approach yields strong convergence properties and does not rely on a predefined graph sequence. For the sake of completeness, we follow both approaches and give corresponding convergence results in Theorems~\ref{thm:convergence} and~\ref{thm:convergence-sampled}.

We start with the first approach, where a graph sequence is given a priori. An important ingredient to define the convergence of such graph sequences is the cut norm (see \cite{lovasz2012large}, Chapter~8.2).
\begin{definition}
Let $\mathcal{W}$ denote the space of all bounded symmetric measurable kernels $W:[0,1]^2\to\R$. For a kernel $W\in\mathcal{W}$, define its cut norm by
\be\label{eq:cut-norm}
\|W\|_\Box \vcentcolon= \sup_{S_1,S_2\subset [0,1]}\left|\int_{S_1}\int_{S_2} W(x,y)dxdy\right|,
\ee
where the supremum is taken over all Borel-measurable subsets $S_1,S_2$. If one identifies functions that are almost everywhere equal, the cut norm is indeed a norm.
\end{definition} 

The seminal works of \citet{lovasz2006limits} and \citet{borgs2008convergent,borgs2012convergent} employed the cut norm and introduced the related cut distance to characterize the convergence of dense graph sequences to graphons. Namely, given a sequence of graphs with adjacency matrices $(G^N)_{N\in\N}$, we say that they converge in cut norm to a graphon $W$ if and only if the corresponding step graphons $W_{G^N}$ defined in \eqref{eq:step-graphon} satisfy $\|W-W_{G^N}\|_\Box\to 0$. 

\begin{assumption}\label{assum:Ax}
There is a constant $M$ such that for each $x\in[0,1]$, the set of admissible actions $\mathcal{A}^x$ from \eqref{eq:Aadinfty} satisfies
\be \label{ad-set-a-m}
\mathcal{A}^x \subset \mathcal{A}_M:=\left\{ \tilde a \in \mathcal A \mid  \|\tilde a\|_\mathcal{A}   \leq M \right\}.
\ee
\end{assumption}
For a set $\mathcal{E}$, denote its infinite product indexed by $[0,1]$ by 
\be\label{eq:product}
\mathcal{E}^{[0,1]}:=\prod_{x\in [0,1]}\mathcal{E}.
\ee
\begin{theorem}\label{thm:convergence}
Consider a graphon game $\mathcal{G}((\mathcal{A}^0)^{[0,1]},U,\theta,W)$ in which the players have homogeneous action sets, that is, $\mathcal{A}^x=\mathcal{A}^0$ for all $x\in [0,1]$. Suppose that the game satisfies Assumptions~\ref{assum:U}, \ref{assum:graphon}, and \ref{assum:Ax} with $\ell_U\lambda_{1}(\boldsymbol{W})<\alpha_U$, and denote by $\bar a$ its unique Nash equilibrium. Consider a sequence of graphs with symmetric adjacency matrices $(G^N)_{N\in\N}$ that converges to $W$ in cut norm, that is, 
$$\|W-W_{G^N}\|_\Box\to 0,\quad \textrm{as }N\to\infty.$$ For each $N\in\N$, let ${\theta}^N\in \mathcal{A}^N$ be a vector of heterogeneity processes and $\smash{\theta^{N}_{\operatorname{step}}}$ be the step function heterogeneity profile corresponding to $\theta^N$ as in \eqref{eq:step-function}. Assume that 
$$
\|\theta-\theta^N_{\operatorname{step}}\|_{\mathcal{A}^\infty}\to 0,\quad \textrm{as }N\to\infty.
$$
Then, there exists $N_0\in\N$ such that the network game $\mathcal{G}((\mathcal{A}^0)^N,U,\theta^N,G^N)$  admits a unique Nash equilibrium $\bar a^N$ for all $N\geq N_0$. Moreover, it holds for all $N\geq N_0$ that
$$
\|\bar a-\bar a^N_{\operatorname{step}}\|_{\mathcal{A}^\infty}\leq C_W\|W-W_{G^N}\|_{\Box}^{1/2}+C_\theta\|\theta-\theta^N_{\operatorname{step}}\|_{\mathcal{A}^\infty}\xrightarrow{N\to \infty}0,
$$
where $\bar a^{N}_{\operatorname{step}}$ is the step function action profile corresponding to $\bar a^N$ as in \eqref{eq:step-function} and $$C_W:=\frac{\sqrt{8}\ell_UM}{\alpha_U-\ell_U\lambda_{1}(\boldsymbol{W})},\quad C_\theta:=\frac{\ell_\theta}{\alpha_U-\ell_U\lambda_{1}(\boldsymbol{W})}.$$
\end{theorem}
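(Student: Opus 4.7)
My plan is to translate the network game into its step-graphon counterpart via Proposition~\ref{prop:correspondence}, argue that the spectral hypothesis of Theorem~\ref{thm:NE}(i) propagates from $\boldsymbol{W}$ to the induced operator $\boldsymbol{W_{G^N}}$ for large $N$, and then derive the quantitative error bound via a variational-inequality argument together with a cut-norm estimate.

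For existence and uniqueness of $\bar a^N$ for large $N$, I invoke Proposition~\ref{prop:correspondence} to identify a Nash equilibrium $\bar a^N$ of $\mathcal{G}((\mathcal{A}^0)^N, U, \theta^N, G^N)$ with a Nash equilibrium $\bar a^N_{\operatorname{step}}$ of the step-graphon game $\mathcal{G}((\mathcal{A}^0)^{[0,1]}, U, \theta^N_{\operatorname{step}}, W_{G^N})$. By Theorem~\ref{thm:NE}(i) applied to the step-graphon game, it is enough to ensure $\ell_U\lambda_1(\boldsymbol{W_{G^N}}) < \alpha_U$ eventually. The key observation is that $\lambda_1(\boldsymbol{W_{G^N}}) \to \lambda_1(\boldsymbol{W})$: cut-norm convergence of $[0,1]$-valued kernels implies spectral continuity (via a standard estimate and Weyl's inequality), and combined with the strict hypothesis $\ell_U\lambda_1(\boldsymbol{W}) < \alpha_U$ this yields the threshold $N_0$.

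For the quantitative bound, I exploit the variational characterization supplied by Assumption~\ref{assum:U}. Since the action sets are homogeneous, both $\bar a^x$ and $\bar a^{N,x}_{\operatorname{step}}$ are admissible competitors for each other. Writing the first-order optimality condition for $\bar a^x$ tested against $\bar a^{N,x}_{\operatorname{step}}$ and vice versa, summing the two inequalities, and inserting $\nabla_a U(\bar a^{N,x}_{\operatorname{step}}, (\boldsymbol{W}\bar a)(x), \theta^x)$ to split the differences, strong concavity of $U$ in its first argument with constant $\alpha_U$ produces
\begin{equation*}
\alpha_U \|\bar a^x - \bar a^{N,x}_{\operatorname{step}}\|_{\mathcal{A}}^2 \leq \bigl\langle \nabla_a U\bigl(\bar a^{N,x}_{\operatorname{step}}, (\boldsymbol{W_{G^N}}\bar a^N_{\operatorname{step}})(x), \theta^{N,x}_{\operatorname{step}}\bigr) - \nabla_a U\bigl(\bar a^{N,x}_{\operatorname{step}}, (\boldsymbol{W}\bar a)(x), \theta^x\bigr), \bar a^{N,x}_{\operatorname{step}} - \bar a^x\bigr\rangle_{\mathcal{A}}.
\end{equation*}
Applying the $\ell_U$- and $\ell_\theta$-Lipschitz estimates of Assumption~\ref{assum:U}, Cauchy-Schwarz, and integration in $x$ then delivers
\begin{equation*}
\alpha_U \|\bar a - \bar a^N_{\operatorname{step}}\|_{\mathcal{A}^\infty}^2 \leq \bigl(\ell_U\|\boldsymbol{W}\bar a - \boldsymbol{W_{G^N}}\bar a^N_{\operatorname{step}}\|_{\mathcal{A}^\infty} + \ell_\theta\|\theta - \theta^N_{\operatorname{step}}\|_{\mathcal{A}^\infty}\bigr) \|\bar a - \bar a^N_{\operatorname{step}}\|_{\mathcal{A}^\infty}.
\end{equation*}

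The last step is to split $\boldsymbol{W}\bar a - \boldsymbol{W_{G^N}}\bar a^N_{\operatorname{step}} = \boldsymbol{W}(\bar a - \bar a^N_{\operatorname{step}}) + (\boldsymbol{W} - \boldsymbol{W_{G^N}})\bar a^N_{\operatorname{step}}$. Self-adjointness of $\boldsymbol{W}$ on $L^2([0,1];\mathcal{A})$ together with its operator norm $\lambda_1(\boldsymbol{W})$ bounds the first summand by $\lambda_1(\boldsymbol{W})\|\bar a - \bar a^N_{\operatorname{step}}\|_{\mathcal{A}^\infty}$, which after absorption on the left produces the denominator $\alpha_U - \ell_U\lambda_1(\boldsymbol{W})$ in both $C_W$ and $C_\theta$. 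I expect the cross term to be the main technical obstacle: here I would invoke a vector-valued cut-norm estimate
\begin{equation*}
\|(\boldsymbol{W} - \boldsymbol{W_{G^N}})\bar a^N_{\operatorname{step}}\|_{\mathcal{A}^\infty} \leq \sqrt{8\|W-W_{G^N}\|_\Box}\cdot M,
\end{equation*}
which uses Assumption~\ref{assum:Ax} (so that $\|\bar a^{N,x}_{\operatorname{step}}\|_{\mathcal{A}} \leq M$ uniformly in $x$) and lifts the classical scalar bound of Lovász (\cite{lovasz2012large}, Lemma~8.11) to the Hilbert-space-valued action profiles of our dynamic setting. This lifting is delicate precisely because $\bar a^N_{\operatorname{step}}$ is only $L^2$-bounded in each coordinate $x$, not pointwise bounded in $(t,\omega)$; I would overcome this by exploiting the Hilbert-space structure of $\mathcal{A}$ through a Gram-kernel representation $K(y,y') = \langle \bar a^{N,y}_{\operatorname{step}}, \bar a^{N,y'}_{\operatorname{step}}\rangle_{\mathcal{A}}$ and reducing to the scalar cut-norm bound component by component. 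Dividing through by $\|\bar a - \bar a^N_{\operatorname{step}}\|_{\mathcal{A}^\infty}$ and rearranging yields the stated estimate.
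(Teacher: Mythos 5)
Your proposal is essentially the paper's argument: identify the network game with its step-graphon game via Proposition~\ref{prop:correspondence}, secure eventual applicability of Theorem~\ref{thm:NE}(i) by spectral continuity, and then derive the quantitative bound from a variational-inequality/strong-concavity estimate followed by the decomposition $\boldsymbol{W}\bar a - \boldsymbol{W_{G^N}}\bar a^N_{\operatorname{step}} = \boldsymbol{W}(\bar a - \bar a^N_{\operatorname{step}}) + (\boldsymbol{W} - \boldsymbol{W_{G^N}})\bar a^N_{\operatorname{step}}$. (The paper packages the VI step as Lipschitz continuity of a best-response operator and proves a general sensitivity result, Proposition~\ref{prop:continuity}, which it then specializes; you inline it, which is fine. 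Your Weyl-type argument for $\lambda_1(\boldsymbol{W_{G^N}})\to\lambda_1(\boldsymbol{W})$ is also valid and arguably more elementary than the cut-distance spectral convergence theorem the paper cites.) The one place you are overcomplicating things is the step you flag as the ``main technical obstacle.'' The bound
\begin{equation*}
\|(\boldsymbol{W} - \boldsymbol{W_{G^N}})\bar a^N_{\operatorname{step}}\|_{\mathcal{A}^\infty} \leq \|\boldsymbol{W}-\boldsymbol{W_{G^N}}\|_{\operatorname{op}}\,\|\bar a^N_{\operatorname{step}}\|_{\mathcal{A}^\infty} \leq \sqrt{8\|W-W_{G^N}\|_\Box}\cdot M
\end{equation*}
requires no Gram-kernel construction and no pointwise boundedness in $(t,\omega)$. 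The first inequality is the trivial fact that the scalar kernel operator, acting componentwise on $L^2([0,1];\mathcal{A})\cong L^2([0,1],\R)\otimes\mathcal{A}$, has the same operator norm as on $L^2([0,1],\R)$ (expand $\bar a^N_{\operatorname{step}}$ in an orthonormal basis of $\mathcal{A}$ and apply the scalar bound slice by slice; this is $\|T\otimes I\|_{\operatorname{op}}=\|T\|_{\operatorname{op}}$). The second inequality is Janson's scalar relation between cut norm and $L^2\to L^2$ operator norm, which is exactly the lemma the paper invokes. In short, once you pass through the operator norm, the Hilbert-valued lift is automatic, and the rest of your derivation closes as stated.
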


\begin{remark}\label{rem:convergence}
Theorem~\ref{thm:convergence} extends the convergence results of \citet{bayraktar2023propagation,carmona2022stochastic,gao2020linear,lacker2023label,tangpi2024optimal} and others to general (in particular non-Markovian) utility functionals as in \eqref{eq:U}, without requiring any assumptions on the underlying graphon. Additionally, compared to the previous works, it allows for much more general idiosyncratic and common noise, which can be incorporated through the heterogeneity processes and through the definition of the utility functional in \eqref{eq:U}, respectively. Furthermore, Theorem~\ref{thm:convergence} extends Theorem~4.4 from \cite{neuman2024stochastic} beyond the linear-quadratic case.
\end{remark}

The proof of Theorem~\ref{thm:convergence} is given in Section~\ref{sec:underlying-game-proofs}.

Next, we proceed with the second approach, where a random sequence of weighted or simple graphs is sampled from a prespecified graphon (see \cite{lovasz2012large}, Chapter~10.1). Consequently, network games can thereby be sampled from a given graphon game.
\begin{definition} \label{def:sampling}
Fix a graphon $W\in\mathcal{W}_0$, a number of desired nodes $N\in\N$, and let $(x_1,\dots, x_N)$ be an ordered $N$-tuple of independent uniform random points from $[0,1]$. We define random graphs sampled according to the following two sampling procedures:
\begin{enumerate}[label=(P{{\arabic*}})]
    \item\label{P1}  the weighted graph $G_w^N(W)$ obtained by taking $N$ isolated nodes $i\in\{1,\ldots , N\}$ and adding undirected edges with weights $W(x_i,x_j)$ between nodes $i$ and $j$ for $i,j=1,\ldots N$,
    \item\label{P2}  the simple graph $G_s^N(W)$ obtained by taking $N$ isolated nodes $i\in\{1,\ldots , N\}$ and adding undirected edges between nodes $i$ and $j$ at random with probability $\kappa_NW(x_i,x_j)$ for $1\leq i< j \leq N$, where $(\kappa_N)_{N\geq 1}\subset (0,1]$ is a sequence of density parameters.   
\end{enumerate}
\begin{remark}\label{rem:sampling}
We will always assume that the sampling is carried out on another probability space $(\Omega',\mathcal{F}',\Q)$ independently of the randomness in the network game and the graphon game modeled by the probability measure $\P$.
\end{remark}
\begin{remark}\label{rem:correspondence-with-kappa_N}
In Definition~\ref{def:sampling}, the expected number of edges per node in $G_s^N(W)$ is of order $\kappa_NN$. In line with \cite{neuman2024stochastic,parise2023graphon}, we will assume later that $\smash{\tfrac{\log N}{\kappa_NN}\to 0}$ as $N \rightarrow \infty$, which allows for graph sequences that gradually become sparser for large $N$. Here, the introduction of a density parameter $\kappa_N$ only affects how the sampled network games are obtained from the graphon, without affecting the graphon game limit itself. As a consequence, one has to slightly adjust the local aggregate $z^{i,N}(a^N)$ in \eqref{eq:z^{i,N}} for the network game on the sampled simple graph $G_s^N(W)$, in order to account for
the fact that the number of edges per node may now grow sublinearly,
\be\label{eq:aggregate-sparse}
z^{i,N}(a^N):=\frac{1}{\kappa_NN}\sum_{j=1}^NG_s^N(W)_{ij}a^{j,N},\quad i=1,\ldots,N.
\ee
Note that Proposition~\ref{prop:correspondence} still holds in the modified network game with $G^N$ replaced by $\kappa_N^{-1}G_s^N(W)$ and $W_{G^N}$ replaced by $\kappa_N^{-1}W_{G_s^N(W)}$.
\end{remark}
\end{definition}
\begin{assumption}\label{assum:Lipschitz-graphon}
Assume that the graphon $W$ is blockwise Lipschitz continuous, that is, there exists a constant $L$ and a finite partition $\{I_1,\ldots,I_{K+1}\}$ of $[0,1]$ such that for any $1\leq k,l\leq K+1$, any set $I_k\times I_l$, and any pair $(x,y),(x',y')\in I_k\times I_l$ it holds that
$$
|W(x,y)-W(x',y')|\leq L(|x-x'|+|y-y'|).
$$
\end{assumption}
\begin{theorem}\label{thm:convergence-sampled}
Consider a graphon game $\mathcal{G}((\mathcal{A}^0)^{[0,1]},U,\theta,W)$ in which the players have homogeneous action sets, that is, $\mathcal{A}^x=\mathcal{A}^0$ for all $x\in [0,1]$. Suppose that the game satisfies Assumptions~\ref{assum:U}, \ref{assum:graphon}, \ref{assum:Ax}, and \ref{assum:Lipschitz-graphon} with $\ell_U<\alpha_U$, and denote by $\bar a$ its unique Nash equilibrium. Let $(\kappa_N)_{N\in\N }\subset(0,1]$ be a sequence of density parameters satisfying $\smash{\tfrac{\log N}{\kappa_NN}\to 0}$ as $N \rightarrow \infty$. For any $N\in\N$, let ${\theta}^N\in \mathcal{A}^N$ be a vector of heterogeneity processes, and assume that 
$$
\|\theta-\theta^N_{\operatorname{step}}\|_{\mathcal{A}^\infty}\to 0,\quad \textrm{as }N\to\infty.
$$
Then, the following statements hold for the sampled network games:
\begin{itemize}
\item[(i)]  For weighted graphs as in \ref{P1}, the sampled network game
$\mathcal{G}((\mathcal{A}^0)^N,U,\theta^N,G_w^N(W))$ admits a unique Nash equilibrium $\bar a^N$ for every $N\in\N$. Moreover, for any $0<\dl<e^{-1}$, it holds with $\Q$-probability at least $1-\dl$ that
\be\label{eq:sampled-convergence-1}
    \left\|\bar a-\bar a^N_{\operatorname{step}}\right\|_{\mathcal{A}^\infty}=\mathcal{O}\left(\left(\frac{\log(N/\dl)}{N}\right)^\frac{1}{4}\vee\|\theta-\theta^N_{\operatorname{step}}\|_{\mathcal{A}^\infty}\right).
\ee
In particular, the left-hand side of \eqref{eq:sampled-convergence-1} converges $\Q$-almost-surely to 0 as $N\to\infty$.

\item[(ii)]  For simple graphs as in \ref{P2}, for any $0<\dl<e^{-1}$, there exists an $N_\dl\in\N$ such that the sampled network game $\mathcal{G}((\mathcal{A}^0)^N,U,\theta^N,\kappa_N^{-1}G_s^N(W))$ admits a unique Nash equilibrium $\bar b^N$ with $\Q$-probability at least $1-2\dl$ for all $N\geq N_\dl$. Moreover, it holds with $\Q$-probability at least $1-2\dl$ that
\be\label{eq:sampled-convergence-2}
    \left\|\bar b-\bar b^N_{\operatorname{step}}\right\|_{\mathcal{A}^\infty}=\mathcal{O}\left(\left(\frac{\log(N/\dl)}{N}\right)^\frac{1}{4}\vee\left(\frac{\log(N/\dl)}{\kappa_NN}\right)^\frac{1}{2}\vee\|\theta-\theta^N_{\operatorname{step}}\|_{\mathcal{A}^\infty}\right).
\ee
In particular, the left-hand side of \eqref{eq:sampled-convergence-2} converges $\Q$-almost-surely to 0 as $N\to\infty$.
\end{itemize}
\end{theorem}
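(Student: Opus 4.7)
My strategy is to reduce the probabilistic convergence statements to the deterministic cut-norm result in Theorem~\ref{thm:convergence}, via quantitative high-probability bounds on the cut-norm distance between $W$ and the step-graphon of the sampled graph. Since $\ell_U < \alpha_U$ and $\lambda_1(\boldsymbol{W}) \leq 1$, Theorem~\ref{thm:NE}(ii) immediately gives existence and uniqueness of the graphon equilibrium $\bar a$. For part~(i), the entries of $G_w^N(W)$ lie in $[0,1]$, so Corollary~\ref{cor:NE-network}(ii) yields existence and uniqueness of $\bar a^N$ for every $N$. For part~(ii), the rescaled matrix $\kappa_N^{-1}G_s^N(W)$ may have entries exceeding $1$; I would invoke Corollary~\ref{cor:NE-network}(i) instead and verify that $\ell_U \lambda_1(\kappa_N^{-1}G_s^N(W)) < \alpha_U N$ holds with high probability for $N \geq N_\delta$.

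\textbf{Cut-norm concentration.} The technical heart of the proof consists in establishing the high-probability bounds
\[
\|W - W_{G_w^N(W)}\|_\Box \lesssim \sqrt{\tfrac{\log(N/\delta)}{N}}, \qquad \|W - \kappa_N^{-1}W_{G_s^N(W)}\|_\Box \lesssim \sqrt{\tfrac{\log(N/\delta)}{N}} + \sqrt{\tfrac{\log(N/\delta)}{\kappa_N N}},
\]
which hold with probability at least $1-\delta$ and $1-2\delta$ respectively. The first bound combines concentration of the empirical distribution of $N$ i.i.d.\ uniform samples from $[0,1]$ with the blockwise Lipschitz regularity of $W$ from Assumption~\ref{assum:Lipschitz-graphon}, used to control the discretization error incurred by collapsing the sampled partition onto the uniform partition $\mathcal{P}^N$. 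The second bound adds a Chernoff-type term for the Bernoulli edges, where the hypothesis $\log N/(\kappa_N N) \to 0$ ensures the sparsity noise remains asymptotically negligible. Analogous bounds are established in \cite{parise2023graphon} and \cite{neuman2024stochastic}, and I would adapt them in a near-black-box fashion.

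\textbf{Assembly and almost-sure convergence.} On the respective high-probability events, Theorem~\ref{thm:convergence} applies and yields
\[
\|\bar a - \bar a^N_{\operatorname{step}}\|_{\mathcal{A}^\infty} \leq C_W \|W - W_{G^N}\|_\Box^{1/2} + C_\theta \|\theta - \theta^N_{\operatorname{step}}\|_{\mathcal{A}^\infty},
\]
and substituting the cut-norm bounds produces \eqref{eq:sampled-convergence-1} and \eqref{eq:sampled-convergence-2}. For the almost-sure statement, I would apply Borel-Cantelli with $\delta_N = N^{-2}$: the sum of failure probabilities is then finite, and the resulting factor $\log(N/\delta_N) \sim 3\log N$ is absorbed into the stated rate.

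\textbf{Main obstacle.} The hardest step is the spectral control required in part~(ii). Because $\kappa_N^{-1}G_s^N(W)$ may have entries of order $\kappa_N^{-1}$, one cannot bound $\lambda_1(\kappa_N^{-1}G_s^N(W))/N$ via the $L^\infty$-norm of the step graphon, and the constants $C_W, C_\theta$ in Theorem~\ref{thm:convergence} a priori do not remain controlled. I would address this by establishing a spectral concentration estimate showing that the top eigenvalue of the normalized random adjacency matrix concentrates around $\lambda_1(\boldsymbol{W}) \leq 1$, via matrix-Bernstein or F\"uredi-Koml\'os-type bounds together with the sparsity condition $\log N/(\kappa_N N) \to 0$. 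This spectral concentration then secures both the applicability of Corollary~\ref{cor:NE-network}(i) and the uniform boundedness of $C_W, C_\theta$ along the sequence, at which point the argument from parts (i) goes through unchanged modulo the extra additive concentration term coming from the Bernoulli sampling.
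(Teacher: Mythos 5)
Your plan matches the structure of the paper's proof: reduce everything to the deterministic sensitivity estimate (Proposition~\ref{prop:continuity}, which is what Theorem~\ref{thm:convergence} packages) and plug in high-probability bounds on the distance between $W$ and the sampled step graphon, with a separate spectral check for the rescaled simple-graph case. A few remarks on where the details diverge.

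First, the paper does not route through the cut norm. Proposition~\ref{prop:continuity} is phrased directly in $\|\boldsymbol{W}-\boldsymbol{W'}\|_{\operatorname{op}}$, and Lemma~\ref{lemma:avella} (imported from \cite{avella2018centrality}) already delivers concentration of $\|\boldsymbol{W}-\boldsymbol{W_{G_w^N(W)}}\|_{\operatorname{op}}$ and $\|\boldsymbol{W}-\kappa_N^{-1}\boldsymbol{W_{G_s^N(W)}}\|_{\operatorname{op}}$ at exactly the stated rates. Passing first to the cut norm and back via Lemma~\ref{lemma:relation-cut-op} is correct but an unnecessary detour, since the available concentration estimate is already for the operator norm.

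Second, the ``main obstacle'' you flag for part (ii) is dispatched in the paper by a far more elementary device than matrix Bernstein or F\"uredi--Koml\'os. Lemma~\ref{lemma:aux-eigenvalues} simply applies the triangle inequality for the spectral norm: $\lambda_1(\kappa_N^{-1}G_s^N(W)) \leq \|\kappa_N^{-1}G_s^N(W) - G_w^N(W)\|_2 + \|G_w^N(W)\|_2 \leq \|\kappa_N^{-1}G_s^N(W) - G_w^N(W)\|_2 + N$, the last step using $G_w^N(W)\in[0,1]^{N\times N}$ and the Frobenius bound, and then invokes the concentration of $\|\kappa_N^{-1}G_s^N(W) - G_w^N(W)\|_2/N$ already contained in the proof of the Avella-Medina et al.\ result. (Minor scaling slip in your writeup: $\lambda_1(\kappa_N^{-1}G_s^N(W))$ is of order $N$; it is the \emph{normalized} quantity $\lambda_1(\kappa_N^{-1}G_s^N(W))/N$ that concentrates near $\lambda_1(\boldsymbol W)$.) Relatedly, your worry that $C_W,C_\theta$ ``a priori do not remain controlled'' is unfounded: in Proposition~\ref{prop:continuity} the constants depend only on $\alpha_U,\ell_U,\ell_\theta,M$ and $\lambda_1(\boldsymbol{W})$ for the \emph{fixed} limiting graphon; the sampled graph enters only through the hypothesis $\lambda_1(\boldsymbol{W'})<\alpha_U/\ell_U$, which is precisely what Lemma~\ref{lemma:aux-eigenvalues} verifies.

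Third, your Borel--Cantelli route to the almost-sure statement is sound and is, if anything, cleaner than what the paper writes. The paper argues by contradiction, taking $\Q(E_N)\le\dl$ for all $N\ge N(\eps)$ to contradict $\Q(\limsup_N E_N)>\dl$; but a uniform bound on the individual $\Q(E_N)$ does not bound $\Q(\limsup_N E_N)$ (reverse Fatou gives the inequality the wrong way, and i.i.d.\ events with $\Q(E_N)=\dl$ have $\Q(\limsup E_N)=1$), so summability of failure probabilities, as in your choice $\dl_N=N^{-2}$, is really the hypothesis one needs. Keep your version of that step.
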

The proof of Theorem~\ref{thm:convergence-sampled} is given in Section~\ref{sec:underlying-game-proofs}.
\begin{remark}\label{rem:sampling-option}
Note that Theorem~\ref{thm:convergence-sampled} allows the choice of heterogeneity processes that are sampled from the heterogeneity profile $\theta$, as assumed in \cite{parise2023graphon}. Namely, assume that $\theta^N=(\theta^{x_i})_{i=1}^N$, where $(x_1,\ldots,x_N)$ is an ordered $N$-tuple of independent uniform points from $[0,1]$ as in Definition~\ref{def:sampling}.
In that case, given that $\theta$ is 
sufficiently regular, the convergence rate of $\|\theta-\theta^N_{\operatorname{step}}\|_{\mathcal{A}^\infty}$ can be bounded. In particular, the right-hand side of \eqref{eq:sampled-convergence-1} reduces to its first term and the right-hand side of \eqref{eq:sampled-convergence-2} reduces to its first two terms. This generalizes Theorem~2 of \citet{parise2023graphon} to the dynamic setting.
\end{remark}

\begin{remark}\label{rem:convergence-sampled}
Theorem~\ref{thm:convergence-sampled} additionally extends the convergence results of \citet{aurell2022stochastic} and \citet{carmona2022stochastic} to general (in particular non-Markovian) utility functionals as in \eqref{eq:U}, while assuming blockwise Lipschitz continuity of the underlying graphon. Finally, it also generalizes Theorem~4.13 from \cite{neuman2024stochastic} beyond the linear-quadratic setting.
\end{remark}

\section{Targeted Interventions}\label{sec:interventions}
Motivated by \citet{galeotti2020targeting} and \citet{parise2023graphon}, we now introduce a second layer on top of the underlying game. Namely, we assume that a central planner seeks to maximize the average utility of the population at equilibrium through so-called targeted interventions.
\subsection{Network and Graphon Interventions} 
In the $N$-player network game from Section~\ref{subsec:definition-finite-player-game}, a targeted intervention is a modification of each player's heterogeneity parameter $\theta^{i,N}$ to $\smash{\theta^{i,N}+\hat{\theta}^{i,N}}$ in \eqref{eq:Ui}, subject to a budget constraint, yielding the altered utility functional 
$$
U\left(a^{i,N},z^{i,N}(a^N),\theta^{i,N}+\hat{\theta}^{i,N}\right).
$$
The intervention is assumed to happen before the game is played, so that the players choose their actions with respect to the modified parameters. In line with the static interventions framework \cite{parise2023graphon}, given a budget $C_B>0$ for the cost of intervention, the central planner's optimization problem in the network game can be formulated as follows:
\be\begin{aligned}\label{eq:network-int-problem}
    \bar\theta^N\in&\argmax_{\hat{\theta}^N\in\mathcal{A}^N}\ T^N(\hat{\theta}^N)= \argmax_{\hat{\theta}^N\in\mathcal{A}^N}\frac{1}{N}\sum_{i=1}^N U\big(\bar a^{i,N},\bar z^{i,N},\theta^{i,N}+\hat{\theta}^{i,N}\big),\\
    \quad \textrm{s.t.}\quad & \bar a^N\textrm{ is a Nash equilibrium of }\mathcal{G}(\mathcal A_{ad}^N,U,\theta^N+\hat{\theta}^N,G^N),\quad \bar z^{N}=\frac{1}{N} G^N\bar a^N,\\
    & \frac{1}{N}\|\hat{\theta}^N\|^2_{\mathcal{A}^N}\leq C_B .
\end{aligned}\ee
Similarly, in the graphon game from Section~\ref{subsec:definition-infinite-player-game}, a targeted intervention is a modification of each player's heterogeneity parameter $\theta^{x}$ to $\theta^x+\hat{\theta}^x$ in \eqref{eq:Ux}, subject to a budget constraint, yielding the altered utility functional 
$$
U\left(a^x,z^x(a),\theta^x+\hat{\theta}^x\right).
$$
Consequently, the central planner's optimization problem in the graphon game can be formulated as follows:
\be\begin{aligned}\label{eq:graphon-int-problem}
    \bar\theta\in&\argmax_{\hat{\theta}\in\mathcal{A}^\infty}\ T(\hat{\theta})= \argmax_{\hat{\theta}\in\mathcal{A}^\infty}\int_0^1 U\big(\bar a^x_{\hat{\theta}},\bar z^x_{\hat{\theta}},\theta^x+\hat{\theta}^x\big)dx,\\
    \quad \textrm{s.t.}\quad & \bar a_{\hat{\theta}}\textrm{ is a Nash equilibrium of }\mathcal{G}(\mathcal A_{ad}^\infty,U,\theta+\hat{\theta},W),\quad \bar z_{\hat{\theta}}=\boldsymbol{W} \bar a_{\hat{\theta}}, \\
    & \|\hat{\theta}\|^2_{\mathcal{A}^\infty}\leq C_B.
\end{aligned}\ee
\subsection{Existence and Uniqueness Results for Interventions}
In the static formulation of \eqref{eq:graphon-int-problem} (see Section~6 of \cite{parise2023graphon}), existence of an optimal graphon intervention can usually be derived via standard compactness and continuity arguments. By contrast, proving existence in the dynamic setting is considerably more involved, since the sets of admissible actions and interventions now live in infinite-dimensional Hilbert spaces. Those sets remain closed and bounded, but are no longer compact. One approach to obtain an existence result nonetheless is to exploit their weak compactness, which however requires very restrictive assumptions on $U$ in \eqref{eq:U} and is therefore not followed here. Instead, we introduce a novel fixed point approach that establishes the existence and uniqueness of a solution to the graphon intervention problem \eqref{eq:graphon-int-problem} under a less restrictive assumption.

Recall the definition \eqref{ad-set-a-m} of the sets $\mathcal{A}_M\subset\mathcal{A}$ from Assumption~\ref{assum:Ax}.
\begin{assumption}\label{assum:U2}
The utility functional $U(\tilde a,\tilde z,\tilde \theta)$ in \eqref{eq:U} is uniformly bounded from above on \mbox{$\mathcal{A}_M\times \mathcal{A}_M\times\mathcal{A}$}. There is an $\ell_0$, such that for any $M'$, $U(\tilde a,\tilde z,\tilde \theta)$ is Lipschitz continuous in $\tilde \theta$ on $\mathcal{A}_M\times \mathcal{A}_M\times\mathcal{A}_{M'}$ with Lipschitz constant $\ell_{M'}=\ell_0(1+M')$. 
For all $\tilde a,\tilde z\in\mathcal{A}_M$, $U(\tilde a,\tilde z,\tilde \theta)$ is continuously G\^ateaux differentiable in $\tilde \theta$ and strongly concave in $\tilde \theta$ with strong concavity constant $\beta_U>0$, that is, $\smash{U(\tilde a,\tilde z,\tilde \theta)+\frac{\beta_U}{2}\|\tilde \theta\|^2_{\mathcal{A}}}$ is concave in $\tilde \theta$. Moreover, $\nabla_{\tilde \theta} U(\tilde a,\tilde z,\cdot)$ is Lipschitz continuous in $\tilde a,\tilde z$ with constants $\ell_a,\ell_z$, and 
\be \label{gt}
\max \Big(\frac{\ell_a+\ell_z\lambda_1(\boldsymbol{W})}{\beta_U},\frac{\ell_\theta}{\alpha_U-\ell_U\lambda_{1}(\boldsymbol{W})}\Big)\leq 1.
\ee
\end{assumption}

\begin{theorem}\label{thm:intervention-existence}
Consider a graphon game $\mathcal{G}((\mathcal{A}^0)^{[0,1]},U,\theta,W)$ in which the players have homogeneous action sets, that is, $\mathcal{A}^x=\mathcal{A}^0$ for all $x\in [0,1]$. Suppose that the game  satisfies Assumptions~\ref{assum:U}, \ref{assum:graphon}, \ref{assum:Ax}, and \ref{assum:U2} with $\ell_U\cdot\lambda_1(\boldsymbol{W})<\alpha_U$.
Then, the graphon intervention problem \eqref{eq:graphon-int-problem} admits a solution $\bar\theta\in\mathcal{A}^\infty$. If the inequality in Assumption~\ref{assum:U2} is strict, the optimal intervention is unique.
\end{theorem}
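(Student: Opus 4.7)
The plan is to implement the novel fixed-point argument sketched in the introduction. Work in the Hilbert space $\mathcal{H}:=\mathcal{A}^\infty\times\mathcal{A}^\infty$ with the natural product inner product, and introduce the closed, bounded, convex sets
$$
K_\theta:=\big\{\hat\theta\in\mathcal{A}^\infty:\|\hat\theta\|^2_{\mathcal{A}^\infty}\leq C_B\big\},\qquad K_a:=\big\{a\in\mathcal{A}^\infty:\|a\|_{\mathcal{A}^\infty}\leq M\big\},
$$
the latter containing every admissible action profile by Assumption~\ref{assum:Ax}. Define the product operator $\boldsymbol{P}:K_\theta\times K_a\to K_\theta\times K_a$ by $\boldsymbol{P}(\hat\theta,a):=(\mathrm{BR}_p(a),\mathrm{BR}_a(\hat\theta))$, where $\mathrm{BR}_a(\hat\theta)$ is the unique Nash equilibrium of the graphon game with heterogeneity $\theta+\hat\theta$ supplied by Theorem~\ref{thm:NE}(i), and $\mathrm{BR}_p(a)$ is the unique maximizer on $K_\theta$ of the functional $\hat\theta\mapsto\int_0^1 U(a^x,(\boldsymbol{W}a)^x,\theta^x+\hat\theta^x)dx$, whose existence and uniqueness follow from the $\beta_U$-strong concavity of $U$ in $\tilde\theta$ (Assumption~\ref{assum:U2}) together with closedness and convexity of $K_\theta$. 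A fixed point $(\bar\theta,\bar a)$ of $\boldsymbol{P}$ simultaneously satisfies the Nash condition $\bar a=\bar a_{\bar\theta}$ and the planner's best-response condition $\bar\theta\in\argmax_{\hat\theta\in K_\theta}\int_0^1 U(\bar a^x,\bar z^x,\theta^x+\hat\theta^x)dx$ with $\bar z=\boldsymbol{W}\bar a$.

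\textbf{Non-expansiveness and fixed point.} The key analytic step is to show that both coordinate maps of $\boldsymbol{P}$ are Lipschitz with precisely the two ratios appearing in \eqref{gt}. For $\mathrm{BR}_a$, given $\hat\theta,\hat\theta'\in K_\theta$ with equilibria $\bar a,\bar a'$, I combine the pointwise variational inequalities on $\mathcal{A}^0$, split the difference of gradients into its increments in $\tilde a$, $\tilde z$, and $\tilde\theta$, and apply the $\alpha_U$-strong concavity of $U(\cdot,\tilde z,\tilde\theta)$ together with the $\ell_U,\ell_\theta$-Lipschitz properties of $\nabla_{\tilde a}U$. This yields the pointwise bound
$$
\alpha_U\|\bar a^x-\bar a'^x\|_\mathcal{A}\leq \ell_U\|(\boldsymbol{W}(\bar a-\bar a'))^x\|_\mathcal{A}+\ell_\theta\|\hat\theta^x-\hat\theta'^x\|_\mathcal{A}.
$$
Taking $L^2([0,1])$-norms in $x$, identifying $\mathcal{A}^\infty$ with $L^2([0,1];\mathcal{A})$, and using that the natural lift of $\boldsymbol{W}$ to this space has operator norm $\lambda_1(\boldsymbol{W})$, this gives $\|\mathrm{BR}_a(\hat\theta)-\mathrm{BR}_a(\hat\theta')\|_{\mathcal{A}^\infty}\leq\tfrac{\ell_\theta}{\alpha_U-\ell_U\lambda_1(\boldsymbol{W})}\|\hat\theta-\hat\theta'\|_{\mathcal{A}^\infty}$. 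A parallel variational-inequality argument for $\mathrm{BR}_p$, now using the $\beta_U$-strong concavity in $\tilde\theta$ and the constants $\ell_a,\ell_z$, produces $\|\mathrm{BR}_p(a)-\mathrm{BR}_p(a')\|_{\mathcal{A}^\infty}\leq\tfrac{\ell_a+\ell_z\lambda_1(\boldsymbol{W})}{\beta_U}\|a-a'\|_{\mathcal{A}^\infty}$. Condition \eqref{gt} forces both constants to be at most one, so $\boldsymbol{P}$ is non-expansive on $\mathcal{H}$, and a strict contraction whenever \eqref{gt} is strict. In the strict case Banach's contraction mapping principle delivers a unique fixed point $(\bar\theta,\bar a)$ in $K_\theta\times K_a$; in general, the Browder--G\"ohde--Kirk theorem for non-expansive self-maps on closed, bounded, convex subsets of a Hilbert space guarantees existence of a fixed point.

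\textbf{From fixed point to optimum, and the main obstacle.} The remaining and most delicate task is to identify $\bar\theta$ as a solution of \eqref{eq:graphon-int-problem}. For a competing feasible $\hat\theta'$ with induced equilibrium $\bar a_{\hat\theta'}$, the planner's best-response inequality only gives $T(\bar\theta)\geq\int_0^1 U(\bar a^x,\bar z^x,\theta^x+\hat\theta'^x)dx$, which compares $T(\bar\theta)$ with the objective frozen at $(\bar a,\bar z)$ rather than with $T(\hat\theta')$. To close this gap I would exploit the graphon structure --- each individual player $x$ has $\nu$-measure zero and therefore does not directly influence their own local aggregate $z^x=(\boldsymbol{W}a)^x$ --- so that, combined with the envelope principle for Nash best responses, the first-order optimality conditions for \eqref{eq:graphon-int-problem} reduce exactly to the pair of variational inequalities characterizing a fixed point of $\boldsymbol{P}$. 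The joint concavity assumptions in $\tilde a$ and $\tilde\theta$ from Assumptions~\ref{assum:U} and \ref{assum:U2} then lift these first-order conditions to global optimality, and uniqueness of the optimal intervention under the strict form of \eqref{gt} follows from uniqueness of the fixed point. The principal obstacle is precisely this reduction: controlling the indirect effect of a perturbation of $\hat\theta$ on the Nash equilibrium $\bar a_{\hat\theta}$ and aggregate $\bar z_{\hat\theta}$, which is handled uniformly by the Lipschitz bound on $\mathrm{BR}_a$ derived in step two.
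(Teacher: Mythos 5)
Your fixed-point machinery faithfully reproduces the paper's: the same product operator $\boldsymbol{P}$ on $K_\theta\times K_a$, the same two Lipschitz bounds matching the constants in \eqref{gt}, Browder's theorem for existence, and Banach's contraction principle for uniqueness under strict inequality. The "main obstacle" you single out at the end is, however, not closed by your sketch. The fixed point only yields
$$
T(\bar\theta)=\int_0^1 U\big(\bar a^x,\bar z^x,\theta^x+\bar\theta^x\big)\,dx\geq\int_0^1 U\big(\bar a^x,\bar z^x,\theta^x+\hat\theta'^x\big)\,dx
$$
for every feasible $\hat\theta'$, whereas $T(\hat\theta')=\int_0^1 U(\bar a_{\hat\theta'}^x,\bar z_{\hat\theta'}^x,\theta^x+\hat\theta'^x)\,dx$ evaluates $U$ at the re-equilibrated pair $(\bar a_{\hat\theta'},\bar z_{\hat\theta'})$. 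Nothing in the fixed-point property, nor in the separate strong concavities of $U$ in $\tilde a$ and in $\tilde\theta$, supplies the missing comparison $\int_0^1 U(\bar a^x,\bar z^x,\theta^x+\hat\theta'^x)\,dx\geq T(\hat\theta')$; in fact the Nash best-response property of $\bar a_{\hat\theta'}$ pushes pointwise in the opposite direction once the aggregate is frozen at $\bar z_{\hat\theta'}$.

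Your appeal to the envelope principle is the natural repair to try, but it only removes one of the two cross terms. Differentiating $T(\hat\theta)$ along the equilibrium manifold, the contribution from $\nabla_{\tilde a}U$ paired with $\partial_{\hat\theta}\bar a_{\hat\theta}$ does vanish by the players' first-order conditions, but the contribution from the derivative of $U$ in its second argument $\tilde z$ paired with $\partial_{\hat\theta}\bar z_{\hat\theta}$ does not. The fact that player $x$ has $\nu$-measure zero makes $x$'s own action irrelevant to $z^x$, but $\bar z_{\hat\theta}=\boldsymbol{W}\bar a_{\hat\theta}$ aggregates the responses of \emph{all} players, all of which move with $\hat\theta$; the integral over $x$ of this indirect welfare effect has no reason to vanish or to have a favorable sign. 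Consequently the stationarity system for \eqref{eq:graphon-int-problem} is not the variational inequality defining $\boldsymbol{T}_z(a)$, and the reduction "the first-order conditions reduce exactly to the fixed-point equations" is false in general. Closing the argument would require an extra structural input controlling this indirect term (a monotonicity or separability hypothesis on $U$ in $\tilde z$, or a reformulation of the planner's objective); you should be aware that the paper's own proof asserts the implication "fixed point $\Rightarrow$ optimal" in a single sentence without addressing it, so you have correctly located the delicate step, but your proposed fix does not succeed.
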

The proof of Theorem~\ref{thm:intervention-existence} is given in Section~\ref{sec:interventions-proofs}. 

\begin{remark}
To the best of our knowledge, Theorem~\ref{thm:intervention-existence} is the first result on the existence and uniqueness of an optimal intervention in a dynamic framework. We prove it in Section~\ref{sec:interventions-proofs} using a novel fixed point argument that leverages the strong concavity of $U$ in $\tilde\theta$ from Assumption~\ref{assum:U2}. The main challenge here lies in the fact that the central planner's intervention influences the players' actions in \eqref{eq:graphon-int-problem} and vice versa. To account for this interdependence, we will introduce a product operator $\boldsymbol{P}$ on a subset of $\mathcal{A}^\infty\times\mathcal{A}^\infty$ that maps a pair $(\hat\theta,a)$ consisting of an intervention and an action profile to the pair $\boldsymbol{P}(\hat\theta,a)$ consisting of the central planner's best response to $a$ and the players' best response to $\hat\theta$. By showing that $\boldsymbol{P}$ admits a (unique) fixed point, we then obtain the existence (and uniqueness) of an optimal intervention (see Section~\ref{sec:interventions-proofs} for details). This approach is inspired by the proof of Theorem~\ref{thm:NE}, where the strong concavity of $U$ in $\tilde a$ from Assumption~\ref{assum:U} is exploited. In particular, this explains the resemblance of Assumptions~\ref{assum:U} and \ref{assum:U2}.
\end{remark}

\subsection{Convergence Results for Interventions}\label{subsec:int-convergence}
As noted in \cite{parise2023graphon}, the network intervention problem \eqref{eq:network-int-problem} scales with the population size $N$, becoming gradually more computationally expensive. In contrast, although solving the graphon intervention problem \eqref{eq:graphon-int-problem} can be costly in general, in many cases it can actually be solved more efficiently than the network intervention problem, for instance when the graphon is of finite rank, that is, the graphon has only finitely many nonzero eigenvalues (see Propositions~1 and 2 in \cite{parise2023graphon}).  Therefore, given a sequence of graphs converging to a graphon such as in the settings of Theorems~\ref{thm:convergence} and \ref{thm:convergence-sampled}, it is desirable to quantify how well the corresponding graphon intervention approximates the network interventions. Such a result is established in the following theorem for a given graph sequence. We consider a sequence of graphs with symmetric adjacency matrices $(G^N)_{N\in\N}$ that converges to $W$ in cut norm, that is, 
\be \label{eq:convergence-ass-1}  
\|W-W_{G^N}\|_\Box\to 0,\quad \textrm{as }N\to\infty.
\ee 
For each $N\in\N$, let ${\theta}^N\in \mathcal{A}^N$ be a vector of heterogeneity processes and assume that 
\be \label{eq:convergence-ass-2}  
\|\theta-\theta^N_{\operatorname{step}}\|_{\mathcal{A}^\infty}\to 0,\quad \textrm{as }N\to\infty,
\ee
where $\theta\in\mathcal{A}^\infty$ is a fixed heterogeneity profile. 
Throughout this section we assume that \eqref{eq:convergence-ass-1}  and \eqref{eq:convergence-ass-2}  hold. 
Given an optimal graphon intervention $\bar\theta\in\mathcal{A}^\infty$, we define the approximate network intervention candidate $\bar\theta^N_W$ by
\be\label{eq:approx-intervention}
(\bar\theta^N_W)_i:=\frac{\bar\theta (\frac{i}{N})}{\gamma^N},\quad i=1,\ldots, N,
\ee
where $\gamma^N$ is a normalizing constant ensuring that $\|\bar\theta^N_W\|_{\mathcal{A}^N}^2=\|\bar\theta\|_{\mathcal{A^\infty}}^2$, so that in particular the budget constraint from \eqref{eq:network-int-problem} is satisfied. Recall that the set $\mathcal A_{M}$ was defined in \eqref{ad-set-a-m}. 

\begin{theorem}\label{thm:intervention-convergence}
Consider a graphon game $\mathcal{G}((\mathcal{A}^0)^{[0,1]},U,\theta,W)$ in which the players have homogeneous action sets, that is, $\mathcal{A}^x=\mathcal{A}^0$ for all $x\in [0,1]$. Suppose that the game satisfies Assumptions~\ref{assum:U}, \ref{assum:graphon}, and \ref{assum:Ax} with $\ell_U\lambda_{1}(\boldsymbol{W})<\alpha_U$. Moreover, assume that the utility functional $U(\tilde a,\tilde z,\tilde \theta)$ is jointly Lipschitz in $(\tilde a,\tilde z,\tilde\theta)$. Suppose that there exists a solution $\bar\theta$ to the graphon intervention problem \eqref{eq:graphon-int-problem}, such that $\bar\theta$ satisfies $\bar\theta^x\in \mathcal{A}_{\bar\theta_{\operatorname{max}}}$ for all $x\in[0,1]$ and a constant $\bar\theta_{\operatorname{max}}$, and there exist a constant $L_{\bar\theta}$ and a finite partition $\{I_1,\ldots,I_{K_{\bar\theta}+1}\}$ of $[0,1]$ such that for any $1\leq k\leq K_{\bar\theta}+1$ and $x,x'\in I_k$ it holds that $\|\bar\theta^x-\bar\theta^{x'}\|_{\mathcal{A}}\leq L_{\bar\theta}|x-x'|$. Then the following hold:
\begin{itemize} 
\item[(i)]  There is an $N_0\in\N$ such that the network game $\mathcal{G}((\mathcal{A}^0)^N,U,\theta^N+\hat{\theta}^N,G^N)$ admits a unique Nash equilibrium for all $\hat\theta^N\in\mathcal{A}^N$ and all $N\geq N_0$. 

\item[(ii)]   Let $T^N_{\operatorname{opt}}$ be the optimal average utility of the population at equilibrium defined in \eqref{eq:network-int-problem}. For all $N\geq N_0$ it holds that
\be
T^N_{\operatorname{opt}}-T^N(\bar\theta^N_W)=\mathcal{O}\left(\|W-W_{G^N}\|_{\Box}^{1/2}\vee\|\theta-\theta_{\operatorname{step}}^N\|_{\mathcal{A}^\infty}\vee \frac{1}{\sqrt{N}}\right).
\ee
In particular, it holds that $T^N(\bar\theta^N_W)\to T^N_{\operatorname{opt}}$ as $N\to\infty$.
\end{itemize} 
\end{theorem}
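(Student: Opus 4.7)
For part (i), I would invoke Corollary \ref{cor:NE-network} directly. The step graphon satisfies $\lambda_{1}(\boldsymbol{W}_{G^N}) = \lambda_{1}(G^N)/N$, and the largest eigenvalue of a graphon operator is continuous under cut-norm convergence (via $\|\boldsymbol{W} - \boldsymbol{W}_{G^N}\|_{2 \to 2} \lesssim \|W - W_{G^N}\|_{\Box}^{1/2}$ for bounded kernels), so $\lambda_{1}(G^N)/N \to \lambda_{1}(\boldsymbol{W})$. The strict inequality $\ell_U \lambda_{1}(\boldsymbol{W}) < \alpha_U$ therefore yields $\ell_U \lambda_{1}(G^N) < \alpha_U N$ for all $N \geq N_0$, and Corollary \ref{cor:NE-network}(i) supplies the unique Nash equilibrium for every intervention $\hat\theta^N \in \mathcal{A}^N$, since the spectral condition is independent of the heterogeneity profile. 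For part (ii), the plan is to sandwich the gap around the optimal graphon value $T_{\operatorname{opt}} := T(\bar\theta)$ via
\begin{equation}
T^N_{\operatorname{opt}} - T^N(\bar\theta^N_W) = \bigl(T^N_{\operatorname{opt}} - T_{\operatorname{opt}}\bigr) + \bigl(T_{\operatorname{opt}} - T^N(\bar\theta^N_W)\bigr)
\end{equation}
and bound each bracket separately.

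The main technical step will be a uniform comparison lemma: for every admissible network intervention $\hat\theta^N$,
\begin{equation}
\bigl|T^N(\hat\theta^N) - T\bigl((\hat\theta^N)_{\operatorname{step}}\bigr)\bigr| = \mathcal{O}\bigl(\|W - W_{G^N}\|_{\Box}^{1/2} \vee \|\theta - \theta^N_{\operatorname{step}}\|_{\mathcal{A}^\infty}\bigr),
\end{equation}
with constants independent of $\hat\theta^N$. To prove it, I would apply Theorem \ref{thm:convergence} to the shifted heterogeneity profiles $\theta + (\hat\theta^N)_{\operatorname{step}}$ in the graphon game and $\theta^N + \hat\theta^N$ in the network game; their $\mathcal{A}^\infty$-difference is precisely $\theta - \theta^N_{\operatorname{step}}$, so the respective Nash equilibria $\bar a_{\hat\theta}$ and $\bar a^N_{\hat\theta}$ satisfy $\|\bar a_{\hat\theta} - (\bar a^N_{\hat\theta})_{\operatorname{step}}\|_{\mathcal{A}^\infty} = \mathcal{O}(\|W - W_{G^N}\|_{\Box}^{1/2} \vee \|\theta - \theta^N_{\operatorname{step}}\|_{\mathcal{A}^\infty})$ with constants $C_W, C_\theta$ depending only on $U$ and $W$. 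Rewriting $T^N(\hat\theta^N)$ as an integral of $U$ against the corresponding step profiles and invoking the joint Lipschitz continuity of $U$ reduces the comparison to bounds on $\|\bar a_{\hat\theta} - (\bar a^N_{\hat\theta})_{\operatorname{step}}\|_{\mathcal{A}^\infty}$, on $\|\theta - \theta^N_{\operatorname{step}}\|_{\mathcal{A}^\infty}$, and on $\|\boldsymbol{W}\bar a_{\hat\theta} - \boldsymbol{W}_{G^N}(\bar a^N_{\hat\theta})_{\operatorname{step}}\|_{\mathcal{A}^\infty}$; the last term splits as $(\boldsymbol{W} - \boldsymbol{W}_{G^N})(\bar a^N_{\hat\theta})_{\operatorname{step}} + \boldsymbol{W}(\bar a_{\hat\theta} - (\bar a^N_{\hat\theta})_{\operatorname{step}})$, with the first piece controlled by the standard cut-norm estimate using $\|(\bar a^N_{\hat\theta})_{\operatorname{step}}\|_{\mathcal{A}^\infty} \leq M$ (the same tool underlying Theorem \ref{thm:convergence}).

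Given the lemma, the first bracket is immediate: $(\hat\theta^N)_{\operatorname{step}}$ is admissible in the graphon problem, hence $T((\hat\theta^N)_{\operatorname{step}}) \leq T_{\operatorname{opt}}$, so taking a supremum gives $T^N_{\operatorname{opt}} \leq T_{\operatorname{opt}} + \mathcal{O}(\|W - W_{G^N}\|_{\Box}^{1/2} \vee \|\theta - \theta^N_{\operatorname{step}}\|_{\mathcal{A}^\infty})$. For the second bracket, I would apply the lemma to $\hat\theta^N = \bar\theta^N_W$, reducing the task to comparing $T((\bar\theta^N_W)_{\operatorname{step}})$ with $T(\bar\theta)$. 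Using the blockwise Lipschitz hypothesis on $\bar\theta$, the uniform bound $\|\bar\theta^x\|_{\mathcal{A}} \leq \bar\theta_{\operatorname{max}}$ on the finitely many boundary pieces of the partition $\{I_1, \ldots, I_{K_{\bar\theta}+1}\}$, and $\gamma^N \to 1$ (which itself follows from a Riemann-sum estimate under the same regularity), I obtain $\|\bar\theta - (\bar\theta^N_W)_{\operatorname{step}}\|_{\mathcal{A}^\infty} = \mathcal{O}(1/\sqrt{N})$. The Lipschitz dependence of the graphon Nash equilibrium on the heterogeneity profile (a direct consequence of the strong-concavity/fixed-point argument behind Theorem \ref{thm:NE}, with constant $C_\theta$) combined with the joint Lipschitz continuity of $U$ then yields $|T(\bar\theta) - T((\bar\theta^N_W)_{\operatorname{step}})| = \mathcal{O}(1/\sqrt{N})$, which combined with the lemma gives the desired bound on the second bracket. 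The hardest step will be ensuring that the comparison lemma is genuinely uniform in $\hat\theta^N$: the constants in Theorem \ref{thm:convergence} must be confirmed independent of the heterogeneity, and the cut-norm estimate for $(\boldsymbol{W} - \boldsymbol{W}_{G^N})$ must be applied carefully to an $\mathcal{A}$-valued step profile, since uniformity is what enables the supremum step and ultimately delivers the claimed rate.
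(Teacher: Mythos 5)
Your proposal is correct and uses essentially the same technical ingredients as the paper's proof: eigenvalue continuity under cut-norm convergence for part (i) (the paper, inside the proof of Theorem~\ref{thm:convergence}, uses $\lambda_1(\boldsymbol{W}_{G^N})=\lambda_1(G^N)/N$ together with cut-distance convergence of eigenvalues, which is interchangeable with your $\|\boldsymbol{W}-\boldsymbol{W}_{G^N}\|_{\operatorname{op}}\lesssim\|W-W_{G^N}\|_\Box^{1/2}$ route), and for part (ii) the Lipschitz sensitivity of the equilibrium in Proposition~\ref{prop:continuity}, the integrated Lipschitz bound in Lemma~\ref{lemma:U-Lipschitz}, Lemma~\ref{lemma:relation-cut-op}, and the blockwise-Lipschitz/boundedness control of $\|\bar\theta-(\bar\theta^N_W)_{\operatorname{step}}\|_{\mathcal{A}^\infty}=\mathcal{O}(N^{-1/2})$ via a Riemann-sum estimate plus the normalization inequality. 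The one presentational difference is how the possible non-existence of a network optimizer is handled: the paper explicitly flags that \eqref{eq:network-int-problem} may not attain its supremum, introduces an $\varepsilon$-optimizer $\bar\theta^N_\varepsilon$, runs a four-term inequality chain, and lets $\varepsilon\to 0$ at the end, whereas you phrase the first bracket as a supremum over admissible $\hat\theta^N$ and invoke a uniform comparison lemma. Both are valid and logically equivalent; the uniformity you flag as the potential sticking point is indeed available, because the constant $C_0=1/(\alpha_U-\ell_U\lambda_1(\boldsymbol{W}))$ in Proposition~\ref{prop:continuity} does not depend on the intervention. A second, smaller difference is that the paper bounds the analogue of your second bracket in one stroke by applying Proposition~\ref{prop:continuity} with different interventions $\hat\theta_1=(\bar\theta^N_W)_{\operatorname{step}}$, $\hat\theta_2=\bar\theta$ on the two sides, while you split it into a step-profile comparison and a $\bar\theta$-to-$(\bar\theta^N_W)_{\operatorname{step}}$ comparison; the resulting bounds are the same.
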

The proof of Theorem~\ref{thm:intervention-convergence} is given in Section~\ref{sec:interventions-proofs}. 

As discussed in Section~\ref{subsec:convergence}, it is also possible to sample a random sequence of weighted or simple graphs from a prespecified graphon (see Definition~\ref{def:sampling}). The following corollary shows that the optimal graphon intervention yields nearly optimal network interventions in this setting as well.
\begin{corollary}\label{cor:intervention-convergence-sampled}
Consider a graphon game $\mathcal{G}((\mathcal{A}^0)^{[0,1]},U,\theta,W)$. Suppose that the game satisfies Assumptions~\ref{assum:U}, \ref{assum:graphon}, \ref{assum:Ax}, and \ref{assum:Lipschitz-graphon} with $\ell_U<\alpha_U$. Moreover, assume that the utility functional $U(\tilde a,\tilde z,\tilde \theta)$ is jointly Lipschitz in $(\tilde a,\tilde z,\tilde\theta)$, and that there exists an optimal graphon intervention $\bar\theta$ as in Theorem~\ref{thm:intervention-convergence}. Let $\bar\theta^N_W$ in \eqref{eq:approx-intervention} be the approximate network intervention with respect to $\bar \theta$.
\begin{itemize}
\item[(i)]  For weighted graphs as in \ref{P1}, the network game
$\mathcal{G}((\mathcal{A}^0)^N,U,\theta^N+\hat\theta^N,G_w^N(W))$ admits a unique Nash equilibrium for all $\hat\theta^N\in\mathcal{A}^N$ and all $N\in\N$. 
For every $0<\dl<e^{-1}$ with $\Q$-probability at least $1-\dl$ it holds,  
\be\label{eq:int-convergence1}
T^N_{\operatorname{opt}}-T^N(\bar\theta^N_W)=\mathcal{O}\Big(\Big(\frac{\log(N/\dl)}{N}\Big)^\frac{1}{4}\vee\|\theta-\theta^N_{\operatorname{step}}\|_{\mathcal{A}^\infty}\Big).
\ee
In particular, the left-hand side of \eqref{eq:int-convergence1} converges $\Q$-almost-surely to 0 as $N\to\infty$.

\item[(ii)]  For simple graphs as in \ref{P2}, for any $0<\dl<e^{-1}$, there exists an $N_\dl\in\N$ such that the network game $\mathcal{G}((\mathcal{A}^0)^N,U,\theta^N+\hat\theta^N,G_s^N(W))$ admits a unique Nash equilibrium with $\Q$-probability at least $1-2\dl$ for all $\hat\theta^N\in\mathcal{A}^N$ and for all $N\geq N_\dl$. 
It holds with $\Q$-probability at least $1-2\dl$ that
\be\label{eq:int-convergence2}
T^N_{\operatorname{opt}}-T^N(\bar\theta^N_W)=\mathcal{O}\Big(\Big(\frac{\log(N/\dl)}{N}\Big)^\frac{1}{4}\vee\|\theta-\theta^N_{\operatorname{step}}\|_{\mathcal{A}^\infty}\Big).
\ee
In particular, the left-hand side of \eqref{eq:int-convergence2} converges $\Q$-almost-surely to 0 as $N\to\infty$.
\end{itemize}
\end{corollary}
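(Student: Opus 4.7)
The plan is to combine the deterministic convergence result of Theorem~\ref{thm:intervention-convergence} with cut-norm concentration bounds for the random step graphons obtained by sampling from $W$, so that both parts of the corollary reduce to random-sampling versions of Theorem~\ref{thm:intervention-convergence}. Before invoking Theorem~\ref{thm:intervention-convergence}, I would verify that its hypotheses hold deterministically along each sample path: under $\ell_U<\alpha_U$ and since the entries of $G_w^N(W)$ and $G_s^N(W)$ lie in $[0,1]$, Corollary~\ref{cor:NE-network}(ii) delivers existence and uniqueness of the Nash equilibrium in each sampled network game for every $\hat\theta^N\in\mathcal{A}^N$. The joint Lipschitz continuity of $U$, the blockwise Lipschitz continuity of $W$ from Assumption~\ref{assum:Lipschitz-graphon}, and the regularity of the optimal graphon intervention $\bar\theta$ inherited from the statement jointly ensure that the approximate network intervention $\bar\theta^N_W$ in \eqref{eq:approx-intervention} is well-defined and that the quantitative estimates behind Theorem~\ref{thm:intervention-convergence} transfer to the sampled setting.

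Next I would invoke the cut-norm concentration inequalities for sampled step graphons that already appear in the proof of Theorem~\ref{thm:convergence-sampled}. For weighted sampling~\ref{P1}, on an event of $\Q$-probability at least $1-\delta$,
$$
\|W-W_{G_w^N(W)}\|_\Box=\mathcal{O}\Big(\Big(\tfrac{\log(N/\delta)}{N}\Big)^{1/2}\Big),
$$
and for simple sampling~\ref{P2} an additional Chernoff-type bound for the Bernoulli edge randomness yields a bound of the same order on an event of probability at least $1-2\delta$. Plugging these estimates into the $\|W-W_{G^N}\|_\Box^{1/2}$ term of Theorem~\ref{thm:intervention-convergence}(ii), and noting that the $1/\sqrt N$ contribution there is dominated by $(\log(N/\delta)/N)^{1/4}$, produces the rates in \eqref{eq:int-convergence1} and \eqref{eq:int-convergence2}. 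The $\Q$-almost sure convergence then follows from a standard Borel--Cantelli argument along a summable sequence $(\delta_N)_N$, together with the hypothesis $\|\theta-\theta^N_{\operatorname{step}}\|_{\mathcal{A}^\infty}\to 0$.

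The main obstacle, already confronted in the proofs of Theorems~\ref{thm:convergence-sampled} and \ref{thm:intervention-convergence}, is controlling the mismatch between the random sampling points $(x_1,\dots,x_N)$ and their placement along the deterministic uniform partition $\mathcal{P}^N$ that is used to form the step graphon. This is precisely the step where Assumption~\ref{assum:Lipschitz-graphon} is needed, and where $\Q$-randomness interacts nontrivially with cut-norm control via the fluctuation of uniform order statistics. Once this discrepancy is absorbed into the Lipschitz modulus of $W$, the remainder of the proof is a direct substitution of the sampling rate into the deterministic bound of Theorem~\ref{thm:intervention-convergence}, with probabilistic bookkeeping identical to that in Theorem~\ref{thm:convergence-sampled}.
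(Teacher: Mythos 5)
Your plan reduces the corollary to a plug-in of a sampling concentration estimate into Theorem~\ref{thm:intervention-convergence}, which is the right structure, but the specific estimate you invoke is not the one the paper has available, and this creates a genuine gap in the rate calculation. You claim that the proof of Theorem~\ref{thm:convergence-sampled} already contains a cut-norm concentration bound
$\|W-W_{G_w^N(W)}\|_\Box=\mathcal{O}\big((\log(N/\delta)/N)^{1/2}\big)$ and that plugging this into the $\|W-W_{G^N}\|_\Box^{1/2}$ term of Theorem~\ref{thm:intervention-convergence}(ii) gives the stated $(\log(N/\delta)/N)^{1/4}$. But the proof of Theorem~\ref{thm:convergence-sampled} contains no cut-norm concentration bound; what it uses is Lemma~\ref{lemma:avella}, which is an \emph{operator-norm} bound, and for a blockwise Lipschitz graphon it gives
$\|\boldsymbol{W}-\boldsymbol{W_{G_w^N(W)}}\|_{\operatorname{op}}\le\rho(N)=2\sqrt{(L^2-K^2)d_N^2+Kd_N}$, which is of order $(\log(N/\delta)/N)^{1/4}$, not $(\log(N/\delta)/N)^{1/2}$, because of the $Kd_N$ term coming from the block boundaries. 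Converting this to a cut-norm bound via Lemma~\ref{lemma:relation-cut-op} only gives $\|W-W_{G_w^N(W)}\|_\Box\le\|\boldsymbol{W}-\boldsymbol{W_{G_w^N(W)}}\|_{\operatorname{op}}=\mathcal{O}\big((\log(N/\delta)/N)^{1/4}\big)$. If you then feed this into the $\|W-W_{G^N}\|_\Box^{1/2}$ term of Theorem~\ref{thm:intervention-convergence}(ii), you obtain only $(\log(N/\delta)/N)^{1/8}$, which is strictly slower than the rate you are trying to prove. Your faster cut-norm claim may well be true (it is consistent with a sup-norm control via order-statistic fluctuations combined with the Lipschitz modulus), but it is not established in the paper, it is not what appears in Theorem~\ref{thm:convergence-sampled}, and you would have to prove it separately.

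The way to avoid this mismatch, and the route the paper actually takes, is to bypass the cut-norm formulation of Theorem~\ref{thm:intervention-convergence}(ii) entirely. Its proof passes through the operator-norm estimates \eqref{eq:int-convergence-aux}--\eqref{eq:int-convergence-aux2}, and the cut norm only enters at the very end via the lossy inequality $\|\boldsymbol{W}-\boldsymbol{W'}\|_{\operatorname{op}}\le\sqrt{8\|W-W'\|_\Box}$. For the sampled setting one should therefore plug Lemma~\ref{lemma:avella}'s operator-norm bound $\rho(N)$ (and $\rho'(N)$ for simple graphs) directly into those intermediate inequalities, which yields the factor $(\log(N/\delta)/N)^{1/4}$ without any detour through cut norms. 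You should also check the almost-sure convergence carefully: the paper's argument is a contradiction argument as in Theorem~\ref{thm:convergence-sampled}, not a Borel--Cantelli argument along a summable sequence $(\delta_N)$; Borel--Cantelli would require some care here because the events for different $N$ involve different, not independent, realizations and because $\delta$ appears inside the rate, but the contradiction argument side-steps this.
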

Corollary~\ref{cor:intervention-convergence-sampled} follows from Theorems~\ref{thm:convergence-sampled} and \ref{thm:intervention-convergence} and it is proved in Section~\ref{sec:interventions-proofs}.
\begin{remark}
Recall that the sequence of density parameters $(\kappa_{N})_{N\in\N}$ was introduced in Definition~\ref{def:sampling} and Remark~\ref{rem:correspondence-with-kappa_N}. For the sake of simplicity, we set $\kappa_N\equiv 1$ in Corollary~\ref{cor:intervention-convergence-sampled}(ii), since the introduction of density parameters requires redefining the sampled network intervention problem in \eqref{eq:network-int-problem} accordingly. Nonetheless, assuming that the problem is defined properly and that $\smash{\tfrac{\log N}{\kappa_NN}\to 0}$, such an extended analysis yields almost sure convergence, where with $\Q$-probability at least $1-2\dl$,
\be\label{eq:int-convergence3}
T^N_{\operatorname{opt}}-T^N(\bar\theta^N_W)=\mathcal{O}\Big(\Big(\frac{\log(N/\dl)}{N}\Big)^\frac{1}{4}\vee\Big(\frac{\log(N/\dl)}{\kappa_NN}\Big)^\frac{1}{2}\vee\|\theta-\theta^N_{\operatorname{step}}\|_{\mathcal{A}^\infty}\Big).
\ee 
Moreover, notice that Corollary~\ref{cor:intervention-convergence-sampled} allows the use of sampled heterogeneity processes in the sense of Remark \ref{rem:sampling-option}. In that case, the right-hand sides of \eqref{eq:int-convergence1} and \eqref{eq:int-convergence2} reduce to their first term and the right-hand side of \eqref{eq:int-convergence3} reduces to its first two terms. This generalizes Theorem~3 of \citet{parise2023graphon} to the dynamic setting. 
\end{remark}

\section{The Linear-Quadratic Case}\label{sec:LQ}
We now consider a linear-quadratic special case of our model, which extends the spectral intervention theory from \citet{galeotti2020targeting} to both the dynamic and the infinite-player setting. Linear-quadratic utility functionals make it possible to solve the underlying network and graphon games explicitly, which in turn allows us to derive the corresponding optimal interventions in semi-explicit form (see Theorems~\ref{thm:LQ} and \ref{thm:LQ-graphon}). These theorems characterize the optimal intervention in terms of the spectrum of the graph's adjacency matrix and the graphon operator, respectively.  One of the main conclusions from this analysis is that in each principal component, the optimal intervention is a scaling of the underlying heterogeneity processes, as shown explicitly in \eqref{eq:optimal-zeta-LQ} and \eqref{eq:optimal-zeta-graphon-LQ}. This observation yields insights on the similarity between the optimal intervention and the eigenfunctions corresponding to the spectrum (see Corollary~\ref{corollary:similarity-ratio-graphon}). Moreover, it provides explicit asymptotics for small and large budgets (see Proposition~\ref{prop:convergence-graphon}) and demonstrates how these asymptotics approximate the optimal intervention (see Proposition~\ref{prop:convergence-rate-graphon}).

\subsection{The Network Game}
\paragraph{Model setup.} We start with the network game. Consider the setup from Section~\ref{subsec:definition-finite-player-game} and assume that the universal utility functional $U$ in \eqref{eq:U} is of the specific form 
\be\label{eq:U-LQ}\begin{aligned}
U(\tilde a,\tilde z,\tilde\theta)&= \E\left[\int_0^T \tilde a_t\left(\tilde\theta_t+\beta\tilde z_t\right)-\frac{1}{2}\tilde a_t^2\,dt \right]+P(\tilde z,\tilde\theta)\\
&=\left\langle\tilde a,\tilde\theta+\beta\tilde z\right\rangle_{\mathcal{A}}-\frac{1}{2}\left\|\tilde a\right\|^2_{\mathcal{A}}+P(\tilde z,\tilde\theta),
\end{aligned}\ee
for a constant $\beta \in \mathbb{R}$ and a functional $P:\mathcal{A}\times\mathcal{A}\to\R$. 
Each player $i \in \{1, \ldots, N\}$ seeks to maximize their individual utility functional on $\mathcal{A}^{i,N}:=\mathcal{A}$ given by
\be \label{eq:Ui-LQ}
a^{i,N}\mapsto \left\langle a^{i,N},\theta^{i,N}+\beta z^{i,N}(a^N)\right\rangle_{\mathcal{A}}-\frac{1}{2}\| a^{i,N}\|^2_{\mathcal{A}}+P\left(z^{i,N}(a^N),\theta^{i,N}\right),
\ee 
where the local aggregate $z^{i,N}(a^N)$ is defined in \eqref{eq:z^{i,N}}, and $G^N\in [0,1]^{N\times N}$ is symmetric. Here, the inner product in \eqref{eq:Ui-LQ} represents the private marginal returns of player $i$ and can be broken down into two parts. The first part $\langle a^{i,N},\theta^{i,N}\rangle_{\mathcal{A}}$ of $i$'s marginal return is independent of the others' actions, and is called $i$'s standalone return. The heterogeneity process $\theta^{i,N}\in\mathcal{A}$ quantifies how much player $i$ benefits from increasing their action $a^{i,N}$. The second part $\beta \langle a^{i,N},z^{i,N}(a^N)\rangle_{\mathcal{A}}$ of $i$'s marginal return is the contribution of the others' actions. Here $G^N_{ij}$ measures strength of interaction between players $i$ and $j$, where we assume that $G^N_{ii}=0$ for all $i=1,\ldots, N$. If $\beta >0$, then actions are strategic complements, that is, they mutually reinforce one another, and if $\beta <0$, then actions are strategic substitutes, that is, they mutually offset one another. Moreover, the term $-\frac{1}{2}\|a^{i,N}\|^2_{\mathcal{A}}$ captures the private costs of player $i$'s action. Finally, the term $P(z^{i,N}(a^N),\theta^{i,N})$ incorporates pure externalities, that is, spillovers due to neighbors' actions and idiosyncratic noise, which do not depend on player $i$'s action.

For fixed ${a}^{-i,N}\in\mathcal{A}^{N-1}$, the utility functional of player $i$'s best response to all others' actions ${a}^{-i,N}$ in \eqref{eq:Ui-LQ} is strongly concave with constant $\alpha_U=1$. Hence, taking the G\^ateaux derivative shows that the first-order condition for player $i$'s action to be a best response is
\be\label{eq-LQ:foc}
a^{i,N}=\theta^{i,N}+\frac{\beta}{N} \sum_{j=1} G^N_{ij}a^{j,N},\quad \P \otimes dt \textrm{-a.e.~on } \Omega \times [0,T].
\ee
Therefore, any Nash equilibrium $\bar{a}\in\mathcal{A}^N$ as in Definition~\ref{def:Nash-finite-player} must satisfy
\be\label{eq:Nash-condition-LQ}
(I^N-\frac{\beta}{N}{G}^N)\bar a^N=\theta^N,\quad \P \otimes dt \textrm{-a.e.~on } \Omega \times [0,T],
\ee
where $I^N$ denotes the $N$-dimensional identity matrix. Given that the local aggregate in \cite{galeotti2020targeting} is defined without normalization, the following assumption coincides with Assumption~2 therein.
Recall that $\lambda_{1}(G^N)$ denotes the largest eigenvalue of $G^N$. 
\begin{assumption}\label{assum:spectral-radius}
It holds that $\beta \lambda_{1}(G^N)<N$.
\end{assumption}
Note that $U$ in \eqref{eq:U-LQ} satisfies Assumption~\ref{assum:U} with Lipschitz constants $\ell_U=\beta$ and $\ell_\theta=1$. Thus, due to Assumption~\ref{assum:spectral-radius}, there exists a unique Nash equilibrium of the network game by Corollary~\ref{cor:NE-network}. It can be derived explicitly from \eqref{eq:Nash-condition-LQ} and is given by 
\be\label{eq:Nash-equilibrium-LQ}
\bar{a}^N=(I^N-\frac{\beta}{N}G^N)^{-1}\theta^N,\quad \P \otimes dt \textrm{-a.e.~on } \Omega \times [0,T],
\ee
where $(I^N-\frac{\beta}{N}G^N)$  is invertible since its eigenvalues are positive by Assumption~\ref{assum:spectral-radius}. 
Before the players take action, a central planner maximizes the average utility at equilibrium, by changing a given vector of status quo standalone returns $\theta\in\mathcal{A}^N$ to a vector $\theta^N+\hat\theta^N\in\mathcal{A}^N$, subject to a budget constraint. Namely, given a budget $C_B>0$, the planner's optimization problem from \eqref{eq:network-int-problem} now takes the form
\be\begin{aligned}\label{eq:network-int-problem-LQ}
    \bar\theta^N\in&\argmax_{\hat{\theta}^N\in\mathcal{A}^N}\ T^N(\hat{\theta}^N)= \argmax_{\hat{\theta}^N\in\mathcal{A}^N}\frac{1}{N}\sum_{i=1}^N U\big(\bar a^{i,N},\bar z^{i,N},\theta^{i,N}+\hat{\theta}^{i,N}\big),\\
    \quad \textrm{s.t.}\quad & \bar{a}^N=(I^N-\frac{\beta}{N}G^N)^{-1}(\theta^N+\hat\theta^N),\quad \bar z^{N}=\frac{1}{N} G^N\bar a^N,\quad \P \otimes dt \textrm{-a.e.},\\
    & \frac{1}{N}\|\hat{\theta}^N\|^2_{\mathcal{A}^N}\leq C_B.
\end{aligned}\ee
The analysis of the optimal intervention $\bar\theta^N$ uses the following spectral properties of the symmetric adjacency matrix $G^N$.  
 
\paragraph{Spectral properties of $G^N$.} Since $G^N$ is symmetric, it admits a spectral decomposition $G^N=U^N\Lambda^N (U^N)^\top$, where:
\begin{itemize}
    \item[(i)] $\Lambda^N\in\R^{N\times N}$ is a diagonal matrix whose diagonal entries $\Lambda^N_{ k  k}=\lam_k(G^N)=:\lambda^N_{ k}$ are the eigenvalues of $G^N$, arranged in descending order: $\lambda^N_1\geq\lambda^N_2\geq\ldots\geq\lambda^N_N$.
    \item[(ii)] $U^N$ is an orthogonal matrix, whose $ k$-th column $U^N_{\bullet k}$ is a real normalized eigenvector of $G^N$ corresponding to the eigenvalue $\lambda^N_ k$.
\end{itemize}
 
For a vector $d \in \R^N$, define $\underline{d} = (U^N)^\top d$. We call the $k$-th component of $\underline d$, that is, $\underline{d}_k$, the projection of $d$ onto the $k$-th principal component. Substituting the expression $G^N=U^N\Lambda^N (U^N)^\top$ into equation \eqref{eq:Nash-condition-LQ}, which characterizes the Nash equilibrium, we obtain
\be \label{eq:theta-LQ} 
\big(I^N - \frac{\beta}{N} U^N\Lambda^N (U^N)^\top\big) \bar{a}^N = \theta^N,\quad \P \otimes dt \textrm{-a.e.~on } \Omega \times [0,T].
\ee 
Multiplying both sides of \eqref{eq:theta-LQ} by $(U^N)^\top$ gives us an equivalent representation to \eqref{eq:Nash-equilibrium-LQ}, 
$$
(I^N - \frac{\beta}{N} \Lambda^N) \bar{\underline{a}}^N = \underline{{\theta}}^N\ \Leftrightarrow\ \bar{\underline{a}}^N = (I^N - \frac{\beta}{N} \Lambda^N)^{-1} \underline{\theta}^N.
$$
Note that the $k$-th diagonal entry of the diagonal matrix $(I^N - \frac{\beta}{N} \Lambda^N)^{-1}$ is $\smash{\frac{1}{1 - \beta \lambda^N_k/N}}$, therefore is follows that, 
\be\label{eq:Nash-in-PC-LQ}
\bar{\underline{a}}^N_{k} = \frac{1}{1 - \beta \lambda^N_k/N} \underline{\theta}^N_{k}, \quad  \textrm{for all } k=1,...,N. 
\ee
This shows that the equilibrium action $\underline{\bar{a}}^N_{k}$ in the $k$-th principal component of $G^N$ is the product of an amplification factor (determined by the strategic parameter $\beta$ and the eigenvalue $\lambda^N_k$) and $\underline{\theta}^N_{k}$, the projection of $\theta^N$ onto that principal component. Under Assumption~\ref{assum:spectral-radius}, we have $1 - \beta \lambda^N_k/N > 0$ for all $k$. When $\beta > 0$ ($\beta < 0$), the amplification factor is decreasing (increasing) in $k$. Finally, define 
\be\label{eq:alpha-LQ}
\alpha^N_k:=\frac{1}{(1-\beta\lambda_k^N/N)^2}>0,\quad k=1,\ldots, N.
\ee


\paragraph{Optimal interventions.}  Now we turn to the analysis of the optimal intervention. The following assumption extends Property A and Assumption~3 from \cite{galeotti2020targeting} to the dynamic setting, and holds for various network games of interest. 

\begin{assumption}\label{assum:property}
Assume there exists $\tilde{w}\in\R$ such that the pure externalities satisfy 
$$
\frac{1}{N}\sum_{i=1}^N P\left(z^{i,N}(\bar{a}^N),\theta^{i,N}\right)=\tilde{w} \|\bar a^N\|^2_{\mathcal{A}^N},\quad\textrm{for all }\,\theta^N\in\mathcal{A}^N, 
$$
where either $\tilde{w}<-\frac{1}{2N}$ and $\frac{1}{N}\|\theta^N\|_{\mathcal{A}^N}^2> C_B$, or $\tilde{w}>-\frac{1}{2N}$. Moreover, assume that $\underline{\theta}^N_k\neq 0$, $\P \otimes dt$-a.e., for each $k$.
\end{assumption}

\begin{remark}\label{rem:property}
Plugging \eqref{eq:Nash-condition-LQ} into \eqref{eq:Ui-LQ} shows that under the first part of Assumption~\ref{assum:property} it holds for  $w :=\tilde{w}+\frac{1}{2N}$ that
$$
T^N(\hat\theta^N)=w\|\bar{a}^N\|_{\mathcal{A}^N}^2,\quad\textrm{for all }\,\hat{\theta}^N\in\mathcal{A}^N.
$$
That is, the average utility at equilibrium is proportional to the squared norm of the players' actions, ensuring the tractability of the network intervention problem \eqref{eq:network-int-problem-LQ}. The second part of Assumption~\ref{assum:property} translates into the assumption that either $w<0$ and $\frac{1}{N}\|\theta^N\|_{\mathcal{A}^N}^2> C_B$, or $w>0$. This excludes the trivial case of \eqref{eq:network-int-problem-LQ} where $w<0$ and $\frac{1}{N}\|\theta^N\|_{\mathcal{A}^N}^2\leq C_B$, in which the planner will always choose the optimal intervention $\bar\theta^N=-\theta^N$.  The last part of Assumption~\ref{assum:property} is of technical nature, and will be needed for the proof of Theorem~\ref{thm:LQ}, where we consider interventions relative to status quo standalone returns.
\end{remark}
The following definition of cosine similarity allows us to describe optimal interventions in terms of the similarity to the principal components of $G^N$.
\begin{definition}
The cosine similarity of two nonzero vectors $c,d\in\R^N$ is given by, 
$$
\rho(c, d) = \frac{\langle c , d\rangle_{\R^N}}{\|c\|_{\R^N} \|{d}\|_{\R^N}}.
$$
\end{definition}
The following theorem solves the network intervention problem \eqref{eq:network-int-problem-LQ}. 

\begin{theorem}\label{thm:LQ}
    Under Assumptions~\ref{assum:spectral-radius} and \ref{assum:property}, the cosine similarity between the optimal intervention ${\bar\theta^N}$ and the $k$-th principal component of $G^N$ is given by
    \be\label{eq-LQ:main-cosine-similarity}
    \rho\big({\bar\theta^N},U^N_{\bullet k}\big)
    =\frac{\|\theta^N\|_{\R^N}}{\|{\bar\theta^N}\|_{\R^N}}\rho\big(\theta^N,U^N_{\bullet k}\big)\frac{w\al^N_ k}{\mu-w\al^N_ k},\quad \P \otimes dt \textrm{-a.e.},\quad  k=1,\ldots, N,
    \ee
    where $\mu$ is uniquely determined as the solution to
    $$C_B=\frac{1}{N}\sum_{ k=1}^N\big(\frac{w\al^N_ k}{\mu-w\al^N_ k}\big)^2\|\underline{\theta}^N_ k\|_{\mathcal{A}}^2,$$
    satisfying $\mu>w\al^N_ k$ for all $ k$. In particular, the optimal intervention in the $ k$-th principal component of $G^N$ is explicitly given by 
    \be\label{eq:optimal-zeta-LQ}
    \underline{\bar\theta}^N_{ k}=\frac{w\al^N_ k}{\mu-w\al^N_ k}\underline{\theta}^N_{ k},\quad \P \otimes dt \textrm{-a.e.},\quad  k=1,\ldots, N.
    \ee
\end{theorem}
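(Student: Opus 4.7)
The strategy is to pass to the spectral basis of $G^N$, reduce the intervention problem to a separable quadratic optimization, and then solve it via Lagrange duality. By Remark~\ref{rem:property} the objective simplifies to $T^N(\hat\theta^N) = w\|\bar a^N\|_{\mathcal{A}^N}^2$. Since $U^N$ is orthogonal, both this squared norm and the budget constraint are invariant under the change of variables $\hat\theta^N\mapsto\underline{\hat\theta}^N:=(U^N)^\top\hat\theta^N$. Combining this with the principal-component representation $\bar{\underline{a}}^N_k=(1-\beta\lambda^N_k/N)^{-1}(\underline\theta^N_k+\underline{\hat\theta}^N_k)$ from \eqref{eq:Nash-in-PC-LQ} and the definition \eqref{eq:alpha-LQ} of $\alpha^N_k$, the intervention problem rewrites as
\begin{equation*}
\max_{\underline{\hat\theta}^N\in\mathcal A^N}\ w\sum_{k=1}^N\alpha^N_k\,\|\underline\theta^N_k+\underline{\hat\theta}^N_k\|_{\mathcal A}^2 \quad\text{s.t.}\quad \frac{1}{N}\sum_{k=1}^N\|\underline{\hat\theta}^N_k\|_{\mathcal A}^2\leq C_B,
\end{equation*}
which decouples across the principal components.

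Next, I introduce a Lagrange multiplier $\mu\geq 0$ for the budget constraint. Taking the G\^ateaux derivative of the Lagrangian in the Hilbert space $\mathcal{A}$ with respect to each $\underline{\hat\theta}^N_k$, and absorbing the factor $1/N$ into $\mu$, gives the first-order condition $(\mu-w\alpha^N_k)\underline{\hat\theta}^N_k = w\alpha^N_k\,\underline\theta^N_k$, which immediately produces \eqref{eq:optimal-zeta-LQ}. Assumption~\ref{assum:property} rules out the trivial case and forces the budget constraint to bind: for $w<0$ the hypothesis $\|\theta^N\|_{\mathcal A^N}^2/N>C_B$ makes the unconstrained minimizer $\underline{\hat\theta}^N=-\underline\theta^N$ infeasible, while for $w>0$ the objective is convex and is maximized on the boundary of the ball. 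Substituting \eqref{eq:optimal-zeta-LQ} into the binding budget equation yields the scalar characterization of $\mu$ stated in the theorem. To show that this $\mu$ is uniquely determined with $\mu>w\alpha^N_k$ for all $k$, I verify that the function $\mu\mapsto \frac{1}{N}\sum_k \bigl(\tfrac{w\alpha^N_k}{\mu-w\alpha^N_k}\bigr)^2\|\underline\theta^N_k\|_{\mathcal A}^2$ is strictly decreasing on the admissible domain (namely $(w\max_k\alpha^N_k,\infty)$ if $w>0$ and $(0,\infty)$ if $w<0$) and covers the value $C_B$ by the intermediate value theorem, using the nondegeneracy assumption $\underline\theta^N_k\neq 0$.

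Finally, I would verify global optimality and derive the cosine-similarity identity. Under $\mu>w\alpha^N_k$, each per-component Lagrangian is a strictly concave quadratic in $\underline{\hat\theta}^N_k$, so its unique critical point \eqref{eq:optimal-zeta-LQ} is the global maximizer of the penalized subproblem and, by decoupling, of the joint Lagrangian; standard Lagrangian duality with the binding budget then confirms this solves the constrained problem. Identity \eqref{eq-LQ:main-cosine-similarity} follows pointwise in $(\omega,t)$: orthonormality of $U^N_{\bullet k}$ gives $\langle\bar\theta^N,U^N_{\bullet k}\rangle_{\R^N}=\underline{\bar\theta}^N_k=\tfrac{w\alpha^N_k}{\mu-w\alpha^N_k}\underline\theta^N_k$ by \eqref{eq:optimal-zeta-LQ}, and substituting $\underline\theta^N_k=\|\theta^N\|_{\R^N}\rho(\theta^N,U^N_{\bullet k})$ before dividing by $\|\bar\theta^N\|_{\R^N}$ yields the claim. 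I expect the most delicate step to be establishing global optimality in the strategic-complements regime $w>0$, where the objective is convex and maximization over a Hilbert ball need not be tractable in general; fortunately the spectral decomposition reduces the problem to one-dimensional quadratic subproblems on Hilbert balls, each of which is rendered strictly concave by the Lagrange penalty under the admissibility condition $\mu>w\max_k\alpha^N_k$.
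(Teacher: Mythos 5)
Your proof is correct and follows essentially the same route as the paper's: change to the eigenbasis of $G^N$, reduction of the objective to $w\|\bar a^N\|^2_{\mathcal{A}^N}$ via Remark~\ref{rem:property}, a Lagrangian in principal coordinates, a G\^ateaux first-order condition giving \eqref{eq:optimal-zeta-LQ}, and determination of $\mu$ by the binding budget equation plus a monotonicity/intermediate-value argument. You dispense with the paper's auxiliary ratio variable $\zeta^N_k = \underline{\hat\theta}^N_k/\underline{\theta}^N_k$ (introduced there only to phrase its sign observations); the paper itself remarks that $\zeta$ need not lie in the relevant $L^2$ space and reverts to the $\underline\eta$-formulation for the Lagrangian step, which matches your setup. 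Where you genuinely differ is in the optimality argument: the paper establishes budget binding and the constraint $\mu > w\alpha^N_k$ by a direct perturbation at an \emph{assumed} optimal solution, tacitly presuming an optimizer exists, whereas you observe that under $\mu>w\alpha^N_k$ for all $k$ together with $\mu>0$ the per-component penalized Lagrangian is strictly concave, so its unique critical point \eqref{eq:optimal-zeta-LQ} is the global Lagrangian maximizer, and then invoke the Lagrangian sufficiency inequality $T^N(\hat\theta)\le \mathcal{L}(\hat\theta,\mu^*)\le\mathcal{L}(\hat\theta^*,\mu^*)=T^N(\hat\theta^*)$ for all feasible $\hat\theta$. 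This is a modest strengthening: it certifies global optimality and uniqueness outright rather than deriving necessary conditions under an implicit existence assumption, and it neatly handles the otherwise delicate case $w>0$, where the objective is convex and a maximizer over an infinite-dimensional Hilbert ball is not guaranteed a priori.
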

The proof of Theorem~\ref{thm:LQ} is given in Section~\ref{sec:LQ-proofs}.  
\begin{remark}
Theorem~\ref{thm:LQ} extends Theorem~1 from \citet{galeotti2020targeting} for static  games to the dynamic setting. One of the interesting insights is that the projected optimal intervention $\smash{\underline{\bar{\theta}}^N_ k}$ is just a scalar factor of the projected status quo standalone returns $\underline{\theta}^N_ k$, without further dependence on $t\in [0,T]$ or $\omega\in\Omega$. This deterministic  factor $\al^N_ k/(\mu-w\al^N_ k)$ is given explicitly in terms of the eigenvalues of the network $G^N$, the parameter $\beta$ characterizing the strategic spillovers, the constant $w$ from Remark \ref{rem:property}, and the Lagrange multiplier $\mu$.
\end{remark}

\subsection{The Infinite-Player Game}

As noted in Section~\ref{subsec:int-convergence}, the network intervention problem \eqref{eq:network-int-problem-LQ} scales with the population size $N$, and the graphon intervention problem can be solved more efficiently in many cases. Motivated by our results on the approximation of the optimal network intervention by the optimal graphon intervention (see Theorems~\ref{thm:convergence} and \ref{thm:convergence-sampled}), we now extend our analysis to the infinite-player setting. One of the main differences from the finite-player case is that, in the infinite-player setting, we must work with a spectral decomposition of the graphon operator $\boldsymbol{W}$ on $L^2([0,1], \mathbb{R})$ rather than with one of the adjacency matrix $G^N$ of the finite network. Unlike in the finite-dimensional case, the spectrum of $\boldsymbol{W}$ may contain either a finite or an infinite number of distinct eigenvalues, each with its own multiplicity. By using the spectral properties of $\boldsymbol{W}$, we address these complexities and establish the optimal intervention in semi-explicit form in Theorem~\ref{thm:LQ-graphon}.

\paragraph{Model setup.} 
Consider the setup from Section~\ref{subsec:definition-infinite-player-game}, and assume that the utility functional $U$ in \eqref{eq:U} is given by \eqref{eq:U-LQ},
so that each player $x\in[0,1]$ seeks to maximize their individual utility functional on $\mathcal{A}^x:=\mathcal{A}$ given by
\be \label{eq:Ux-LQ}
a^x\mapsto \left\langle a^x,\theta^x+\beta z^x(a)\right\rangle_{\mathcal{A}}-\frac{1}{2}\| a^x\|^2_{\mathcal{A}}+P\left(z^x(a),\theta^x\right),
\ee 
where the local aggregate $z^x(a)$ is defined in \eqref{eq:graphon-operator}, $W\in\mathcal{W}_0$ is a graphon, $\theta\in\mathcal{A}^\infty$ is a heterogeneity profile, and $\beta$ and $P$ are as before.

For $a\in\mathcal{A}^\infty$ and $x\in[0,1]$, we write $a^{-x}:=(\al^{y})_{y\neq x}\in\mathcal{A}^\infty$. Now for fixed $a^{-x}\in\mathcal{A}^{\infty}$, the utility functional in \eqref{eq:Ux-LQ} of player $x$'s best response to all others' actions $a^{-x}$ is strictly concave. Hence, taking the G\^ateaux derivative shows that the first-order condition for player $x$'s action to be a best response is
\be
a^x=\theta^x+\beta\int_0^1 W(x,y)a^ydy,\quad \P \otimes dt \textrm{-a.e.~on } \Omega \times [0,T].
\ee
Therefore, any Nash equilibrium $\bar{a}\in\mathcal{A}^\infty$ as in Definition~\ref{def:Nash-infinite-player} must satisfy
\be\label{eq-LQ:Nash-condition-graphon}
(\boldsymbol{I-\beta\boldsymbol{W}})\bar{a}=\theta,\quad \P \otimes dt \otimes d\nu \textrm{-a.e.~on } \Omega \times [0,T]\times [0,1],
\ee
where $\boldsymbol{I}$ denotes the identity operator on $L^2([0,1],\R)$. The following assumption is the continuum analogue of Assumption~\ref{assum:spectral-radius}.
\begin{assumption}\label{assum:spectral-radius-graphon}
It holds that $\beta\lambda_1(\boldsymbol{W}) <1$.
\end{assumption}
Recalling that $U$ in \eqref{eq:U-LQ} satisfies Assumption~\ref{assum:U} with Lipschitz constants $\ell_U=\beta$ and $\ell_\theta=1$, there exists a unique Nash equilibrium of the graphon game by Theorem~\ref{thm:NE} and Assumption~\ref{assum:spectral-radius-graphon}. Moreover, the operator $\boldsymbol{I-\beta\boldsymbol{W}}$ is invertible (see \cite{hall2013quantum}, Chapter~7.1, Lemma~7.6), and the Nash equilibrium is explicitly given by
\be\label{eq-LQ:graphon-Nash-equilibrium}
\bar{a}=(\boldsymbol{I-\beta\boldsymbol{W}})^{-1}\theta,\quad \P \otimes dt \otimes d\nu \textrm{-a.e.~on } \Omega \times [0,T]\times [0,1].
\ee
A central planner seeks to maximize the average utility at equilibrium, by changing some given status quo standalone returns $\theta\in\mathcal{A}^\infty$ to $\theta+\hat\theta\in\mathcal{A}^\infty$, subject to a budget constraint. Namely, given a budget $C_B>0$, the graphon intervention problem in \eqref{eq:graphon-int-problem} now becomes
\be\begin{aligned}\label{eq:graphon-int-problem-LQ}
    \bar\theta\in&\argmax_{\hat{\theta}\in\mathcal{A}^\infty}\ T(\hat{\theta})= \argmax_{\hat{\theta}\in\mathcal{A}^\infty}\int_0^1 U\big(\bar a^x,\bar z^x,\theta^x+\hat{\theta}^x\big)dx,\\
    \quad \textrm{s.t.}\quad & \bar{a}=(\boldsymbol{I-\beta\boldsymbol{W}})^{-1}(\theta+\hat\theta),\quad \bar z=\boldsymbol{W} \bar a,\quad \P \otimes dt \otimes d\nu \textrm{-a.e.}, \\
    & \|\hat{\theta}\|^2_{\mathcal{A}^\infty}\leq C_B.
\end{aligned}\ee

Denote by $\langle\cdot,\cdot\rangle_{L^2}$ and $\|\cdot\|_{L^2}$ the inner product and norm on $L^2([0,1],\R)$. Notice that the self-adjoint operator $\boldsymbol{W}$ on $L^2([0,1],\R)$ is a  Hilbert-Schmidt operator and thus compact. The following spectral decomposition is the infinite-dimensional analogue to the decomposition of the matrix $G^N$ presented before. 

\paragraph{Spectral properties of $\boldsymbol{W}$.}
     \begin{itemize}
    \item[(i)] The spectrum $\sigma(\boldsymbol{W})$ of $\boldsymbol{W}$ is given by $\sigma(\boldsymbol{W})=\{0\}\cup\{\lambda_i\}_{i\in I}\subset\R$, where $I$ is a countable (possibly finite) index set and $\lambda_i\neq 0$ for all $i\in I$.  Every nonzero $\lambda\in\sigma(\boldsymbol{W})$ is an isolated eigenvalue with finite multiplicity denoted by $m(\lambda)\in\N$. It is possible that $m(0)=\infty$. (See \cite{conway2019course}, Chapter~7.7, Theorem~7.1.)
    \item[(ii)] There exists a countable orthonormal basis of eigenfunctions $\smash{\{\{ e_{\lambda,j}\}_{j=1}^{m(\lambda)}\}_{\lambda\in \sigma(\boldsymbol{W})}}$ of $\boldsymbol{W}$  corresponding to the eigenvalues.
    In particular, any function $f\in L^2([0,1],\R)$ can be decomposed as 
    $$
    f(x)=\sum_{\lambda\in\sigma(\boldsymbol{W})}\sum_{j=1}^{m(\lambda)} \langle f,e_{\lambda,j}\rangle_{L^2} e_{\lambda,j}(x),\quad x\in [0,1],
    $$
    so it follows that
    \be\label{eq-LQ:representation-Wf}
    (\boldsymbol{W}f)(x)=\sum_{\lambda\in\sigma(\boldsymbol{W})}\sum_{j=1}^{m(\lambda)} \lambda\langle f, e_{\lambda,j}\rangle_{L^2} e_{\lambda,j}(x),\quad x\in [0,1].
    \ee
    (See \cite{lovasz2012large}, Chapter~7.5.)
    \item[(iii)] For $j\in\N$, denote by $\|\cdot\|_{\R^j}$ the Euclidean norm on $\R^j$. Define the Hilbert space direct sum 
    $$
    X:=\bigoplus_{\lambda\in\sigma(\boldsymbol{W})}\R^{m(\lambda)}
    $$
    containing all tuples $g=(g(\lambda))_{\lambda\in\sigma(\boldsymbol{W})}$ such that $g(\lambda)\in\R^{m(\lambda)}$ and 
    $$    \| g\|_X^2:=\sum_{\lambda\in\sigma(\boldsymbol{W})}\|g(\lambda)\|_{\R^{m(\lambda)}}^2<\infty.
    $$
    In the case that $m(0)=\infty$, the space $\R^\infty$ denotes the square-summable sequences. Then $\boldsymbol{W}$ admits a decomposition $\boldsymbol{W}=\boldsymbol{U^*\boldsymbol{M}\boldsymbol{U}}$ where $\boldsymbol{U}:L^2([0,1],\R)\to X$ is the unitary operator given by
    $$
    (\boldsymbol{U} f)(\lambda)=(\langle f, e_{\lambda,1}\rangle_{L^2},\langle f, e_{\lambda,2}\rangle_{L^2},\ldots,\langle f, e_{\lambda,m(\lambda)}\rangle_{L^2})\in\R^{m(\lambda)},\quad \lambda\in\sigma(\boldsymbol{W}),
    $$
    and $\boldsymbol{M}:X\to X$ is the multiplication operator given by $(\boldsymbol{M} g)(\lambda)=\lambda  g(\lambda)$, $ \lambda\in\sigma(\boldsymbol{W})$. (See \cite{hall2013quantum}, Chapter~7.3.) 
\end{itemize}

For a function $ f \in L^2([0,1],\R)$, define $\smash{\underline{ f}} = \boldsymbol{U} f\in X$, which consists of the projections of $f$ onto the eigenspaces corresponding to the  eigenvalues in $\sigma(\boldsymbol{W})$. In particular, we call $\underline{f}(\lambda)=(\boldsymbol{U} f)(\lambda)\in\R^{m(\lambda)}$ the projection onto the principal component corresponding to $\lambda$. Substituting the expression $\boldsymbol{W} = \boldsymbol{U}^* \boldsymbol{M} \boldsymbol{U}$ into equation \eqref{eq-LQ:Nash-condition-graphon}, which characterizes the graphon Nash equilibrium, we obtain
$$
(\boldsymbol{I} - \beta \boldsymbol{U}^* \boldsymbol{M} \boldsymbol{U}) \bar{a} = \theta,\quad \P \otimes dt \otimes d\nu \textrm{-a.e.~on } \Omega \times [0,T]\times [0,1].
$$
Applying $\boldsymbol{U}$ to both sides of this equation  gives us an analogue of \eqref{eq-LQ:graphon-Nash-equilibrium} characterizing the solution of the game,
$$
(\boldsymbol{J} - \beta \boldsymbol{M}) \underline{\bar{a}} = \underline{\theta}\ \Leftrightarrow\ \underline{\bar{a}} = (\boldsymbol{J} - \beta \boldsymbol{M})^{-1} \underline{\theta},
$$
where $\boldsymbol{J}$ denotes the identity operator on $X$. Note that the operator $(\boldsymbol{J} - \beta \boldsymbol{M})^{-1}$ is a multiplication operator, given by
$$
((\boldsymbol{J}-\beta\boldsymbol{M})^{-1} g)(\lambda)=(1-\beta \lambda)^{-1} g(\lambda),\quad\textrm{for } g\in X \textrm{ and } \lambda\in\sigma(\boldsymbol{W}).
$$
Due to Assumption~\ref{assum:spectral-radius-graphon}, we have $1 - \beta \lambda > 0$ for all $\lambda\in \sigma( \boldsymbol{W})$. Define the scalars $$\al_{\lambda}: =(1 - \beta \lambda)^{-2}.$$ Then, for every $\lambda\in \sigma(\boldsymbol{W})$,
\be\label{eq-LQ:Nash-in-PC-graphon}
\bar{\underline{a}}(\lambda) = \sqrt{\al_{\lambda}} \underline{\theta}(\lambda).
\ee
As in the finite-player case, this shows that the equilibrium action $\underline{\bar{a}}(\lambda)$ in the principal component corresponding to $\lambda$ is the product of an amplification factor (determined by the strategic parameter $\beta$ and $\lambda$) and $\underline{\theta}(\lambda)$, the projection of $\theta$ onto that principal component. When $\beta > 0$ ($\beta < 0$), the amplification factor is increasing (decreasing) in $\lambda$. Equation \eqref{eq-LQ:Nash-in-PC-graphon} also yields a reconstruction formula for equilibrium actions in the original coordinates,
$$
\bar{a}^x = \sum_{\lambda\in\sigma(\boldsymbol{W})}\sqrt{\al_{\lambda}}\sum_{j=1}^{m(\lambda)}  \underline{\theta}(\lambda)_j  e_{\lambda,j}(x),\quad x\in [0,1].
$$

\paragraph{Optimal interventions.} We now study optimal interventions in the graphon game. The following assumption is the infinite-dimensional analogue of Assumption~\ref{assum:property}.
\begin{assumption}\label{assum:property-graphon}
Assume there exists $\tilde{w}\in\R$ such that the pure externalities satisfy 
$$
\int_0^1 P\left(z^x(\bar a),\theta^x\right)dx=\tilde{w} \|\bar a\|^2_{\mathcal{A}^\infty},\quad\textrm{for all }\,\theta\in\mathcal{A}^\infty.
$$
Suppose that either $\tilde{w}<-\frac{1}{2}$ and $\|\theta\|_{\mathcal{A}^\infty}^2> C_B$, or $\tilde{w}>-\frac{1}{2}$. Furthermore, assume that $\underline{\theta}(\lambda)_j\neq 0$, $\P \otimes dt$-a.e., for all $j=1,\ldots,m(\lambda)$ and all $\lambda\in\sigma(\boldsymbol{W})$.
\end{assumption}

\begin{remark}\label{rem:property-graphon}
\eqref{eq-LQ:Nash-condition-graphon} into \eqref{eq:Ux-LQ} shows that under the first part of Assumption~\ref{assum:property-graphon} it holds for  $w :=\tilde{w}+\frac{1}{2}$ that
$$
T(\hat\theta)=w\left\|\bar{a}\right\|_{\mathcal{A}^\infty}^2,\quad\textrm{for all }\,\hat{\theta}\in\mathcal{A}^\infty.
$$
That is, the average utility at equilibrium is proportional to the squared norm of the players' actions, ensuring the tractability of the graphon intervention problem in \eqref{eq:graphon-int-problem-LQ}. The second part of Assumption~\ref{assum:property-graphon} translates into the assumption that either $w<0$ and $\|\theta\|_{\mathcal{A}^\infty}^2> C_B$, or $w>0$. As in the finite-player setting, this excludes the trivial case of \eqref{eq:graphon-int-problem-LQ} where $w<0$ and $\|\theta\|_{\mathcal{A}^\infty}^2\leq C_B$, in which the planner will always choose the optimal intervention $\bar\theta=-\theta$.  The last part of Assumption~\ref{assum:property-graphon} is of technical nature, and will be needed for the proof of Theorem~\ref{thm:LQ-graphon}.
\end{remark}
\begin{definition}
The cosine similarity of two nonzero functions $f,g\in L^2([0,1],\R) $ is
$$
\rho(f,g) = \frac{\langle f,g\rangle_{L^2}}{\|f\|_{L^2} \|g\|_{L^2}}.
$$
\end{definition}
The following theorem solves the graphon intervention problem \eqref{eq:graphon-int-problem-LQ}.
\begin{theorem}\label{thm:LQ-graphon}
    Suppose Assumptions~ \ref{assum:spectral-radius-graphon} and \ref{assum:property-graphon} hold.
    At the optimal intervention, the cosine similarity between $\bar\theta$ and the eigenfunction $e_{\lambda,j}$ is given by
    \be\label{eq:main-graphon-cosine-similarity-LQ}
    \rho\big(\bar\theta,e_{\lambda,j}\big)
    =\frac{\|\theta\|_{L^2}}{\|\bar\theta\|_{L^2}}\rho\big(\theta,e_{\lambda,j}\big)\frac{w\al_\lambda}{\mu-w\al_\lambda},\quad \P \otimes dt \textrm{-a.e.},\quad \lambda\in\sigma(\boldsymbol{W}),\ j=1,\ldots, m(\lambda),
    \ee
    where $\mu$ is uniquely determined as the solution to
    \be\label{eqshadow-price-graphon-LQ}
    C_B=\sum_{\lambda\in\sigma(\boldsymbol{W})}\big(\frac{w\al_\lambda}{\mu-w\al_\lambda}\big)^2\big\|\|{\underline{\theta}}(\lambda)\|_{\R^{m(\lambda)}}\big\|_{\mathcal{A}}^2,
    \ee
    satisfying $\mu>w\al_\lambda$ for all $\lambda$. In particular, the optimal intervention in the principal component of $\boldsymbol{W}$ corresponding to $\lambda\in\sigma(\boldsymbol{W})$ is explicitly given by 
    \be\label{eq:optimal-zeta-graphon-LQ}
    \bar{\underline{\theta}}(\lambda)=\frac{w\al_\lambda}{\mu-w\al_\lambda}{\underline{\theta}}(\lambda),\quad \P \otimes dt \textrm{-a.e.},\quad \lambda\in\sigma(\boldsymbol{W}).
    \ee
\end{theorem}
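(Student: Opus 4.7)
The plan is to diagonalize the problem via the spectral decomposition $\boldsymbol{W}=\boldsymbol{U}^*\boldsymbol{M}\boldsymbol{U}$ and then apply a Lagrange multiplier argument in the resulting separated form. First, I would rewrite the objective and the budget constraint in the spectral basis. Applying $\boldsymbol{U}$ pointwise in $(t,\omega)$ to the analogue of \eqref{eq-LQ:graphon-Nash-equilibrium} with $\theta$ replaced by $\theta+\hat\theta$ yields $\underline{\bar a}(\lambda)=\sqrt{\alpha_\lambda}(\underline{\theta}(\lambda)+\underline{\hat\theta}(\lambda))$, and Parseval's identity (since $\boldsymbol{U}$ is unitary), combined with Remark~\ref{rem:property-graphon}, gives
\be
T(\hat\theta)=w\sum_{\lambda\in\sigma(\boldsymbol{W})}\alpha_\lambda \big\|\|\underline{\theta}(\lambda)+\underline{\hat\theta}(\lambda)\|_{\R^{m(\lambda)}}\big\|_{\mathcal{A}}^2,\qquad \|\hat\theta\|_{\mathcal{A}^\infty}^2=\sum_{\lambda\in\sigma(\boldsymbol{W})}\big\|\|\underline{\hat\theta}(\lambda)\|_{\R^{m(\lambda)}}\big\|_{\mathcal{A}}^2.
\ee
Thus, the graphon intervention problem \eqref{eq:graphon-int-problem-LQ} reduces to a separable quadratic program in the components $\underline{\hat\theta}(\lambda)$.

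Next, I would argue that the budget constraint is active at the optimum. If $w>0$, the objective strictly increases upon scaling any candidate $\hat\theta$ toward the boundary, while if $w<0$, the only unconstrained minimizer of $\|\bar a\|_{\mathcal{A}^\infty}^2$ is $\hat\theta=-\theta$, which is infeasible by the assumption $\|\theta\|_{\mathcal{A}^\infty}^2>C_B$. In both cases the optimum lies on the sphere $\|\hat\theta\|_{\mathcal{A}^\infty}^2=C_B$. I would then form the Lagrangian with multiplier $\mu$ and compute its G\^ateaux derivative in each $\underline{\hat\theta}(\lambda)$, treated as an $\R^{m(\lambda)}$-valued process in $\mathcal{A}$. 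The first-order condition is the pointwise identity
\be
(\mu-w\alpha_\lambda)\,\underline{\hat\theta}(\lambda)=w\alpha_\lambda\,\underline{\theta}(\lambda),\qquad \P\otimes dt \text{-a.e.,}
\ee
for every $\lambda\in\sigma(\boldsymbol{W})$, which rearranges to \eqref{eq:optimal-zeta-graphon-LQ} as soon as $\mu\neq w\alpha_\lambda$ for all $\lambda$.

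To pin down $\mu$, I would note that $\bar\alpha:=\sup_\lambda\alpha_\lambda$ is finite and attained at some $\lambda^*\in\sigma(\boldsymbol{W})$, since $|\lambda|\leq\lambda_1(\boldsymbol{W})$ and the nonzero eigenvalues are isolated with finite multiplicity. For $w>0$, I restrict $\mu$ to $(w\bar\alpha,\infty)$, where the right-hand side of \eqref{eqshadow-price-graphon-LQ} is continuous and strictly decreasing in $\mu$, blows up as $\mu\downarrow w\bar\alpha$ (using $\underline{\theta}(\lambda^*)\neq 0$ from Assumption~\ref{assum:property-graphon}), and vanishes as $\mu\to\infty$, yielding a unique solution. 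For $w<0$, I restrict to $\mu\in(0,\infty)$, where $\mu>w\alpha_\lambda$ is automatic, the right-hand side of \eqref{eqshadow-price-graphon-LQ} equals $\|\theta\|_{\mathcal{A}^\infty}^2>C_B$ at $\mu=0^+$, vanishes as $\mu\to\infty$, and is again strictly decreasing. Strict concavity (resp. convexity) of the Lagrangian in $\underline{\hat\theta}$ at fixed $\mu$ promotes the resulting KKT point to the global optimum. The cosine similarity formula \eqref{eq:main-graphon-cosine-similarity-LQ} then follows directly from \eqref{eq:optimal-zeta-graphon-LQ} using $\langle f,e_{\lambda,j}\rangle_{L^2}=\underline{f}(\lambda)_j$ and the orthonormality of $\{e_{\lambda,j}\}$.

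The main technical obstacle will be to carefully justify the spectral reduction when $\sigma(\boldsymbol{W})$ is infinite and $m(0)$ may equal $\infty$: one must establish joint measurability of $(\lambda,t,\omega)\mapsto\underline{\hat\theta}(\lambda)_t(\omega)$, rigorously interchange the spectral sum with the time integral and the expectation in both the objective and the constraint, and convert the global G\^ateaux identity into the componentwise pointwise relation by testing against a sufficiently rich family of perturbations (for instance, products of indicators of single spectral components with arbitrary elements of $\mathcal{A}$). Once these interchanges are in place, the remainder of the argument reduces to elementary monotonicity analysis of the scalar map $\mu\mapsto\sum_\lambda\big(w\alpha_\lambda/(\mu-w\alpha_\lambda)\big)^2\|\|\underline{\theta}(\lambda)\|_{\R^{m(\lambda)}}\|_{\mathcal{A}}^2$.
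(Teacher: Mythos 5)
Your proposal follows essentially the same route as the paper's proof: pass to the spectral coordinates $\underline{\hat\theta} = \boldsymbol{U}\hat\theta$ using Parseval, argue the budget is binding, introduce the Lagrangian, derive the componentwise first-order condition, and pin down $\mu$ via strict monotonicity of the budget function, with the cosine-similarity formula falling out from orthonormality. The computations are correct and the final formulas match.

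The one genuine difference lies in how you establish that the relevant multiplier satisfies $\mu > w\alpha_\lambda$ for all $\lambda$. The paper introduces the ratio variable $\zeta(\lambda)_j = \underline{\hat\theta}(\lambda)_j/\underline{\theta}(\lambda)_j$ (which is why the last part of Assumption~\ref{assum:property-graphon} is in force) and proves, by a direct sign-flipping optimality argument, that the optimal $\bar\zeta$ lies in $[0,\infty)$ when $w>0$ and in $[-1,0]$ when $w<0$; combining this with $\bar\zeta(\lambda)_j = w\alpha_\lambda/(\mu-w\alpha_\lambda)$ forces $\mu > w\alpha_\lambda$. You instead take a more standard ``guess-and-verify'' duality route: you locate $\mu$ in the interval where the Lagrangian is strictly concave in $\underline{\hat\theta}$ (namely $\mu > \sup_\lambda w\alpha_\lambda$ for $w>0$, and $\mu>0$ for $w<0$ where your observation that the budget map equals $\|\theta\|^2_{\mathcal{A}^\infty} > C_B$ at $\mu=0^+$ is the key point), and then appeal to weak duality to promote the stationary point to a global maximizer. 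Both approaches are sound; the paper's sign observation avoids invoking Lagrangian duality and gives $\mu > w\alpha_\lambda$ as a necessary condition on any optimizer, whereas your argument also delivers existence as a by-product. Your explicit flagging of the measurability and interchange issues (for infinite $\sigma(\boldsymbol{W})$ and possibly $m(0)=\infty$) is appropriate and matches what the paper glosses over in passing from \eqref{eq-LQ:IT-in-PC-graphon} to the componentwise G\^ateaux derivative.
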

The proof of Theorem~\ref{thm:LQ-graphon} is given in Section~\ref{sec:LQ-proofs}.
\begin{remark}
Theorem~\ref{thm:LQ-graphon} extends Theorem~1 from \citet{galeotti2020targeting} for static finite-player games to the dynamic infinite-player setting. As in the finite-player case (see Theorem~\ref{thm:LQ}), the projected optimal intervention $\bar{\underline{\theta}}(\lambda)$ is simply a scalar factor of the projected status quo standalone returns ${\underline{\theta}}(\lambda)$, without further dependence on $t\in [0,T]$ or $\omega\in\Omega$. The deterministic  factor $\frac{w\al_\lambda}{\mu-w\al_\lambda}$ is given explicitly in terms of the eigenvalues of the graphon operator $\boldsymbol{W}$, the parameter $\beta$ characterizing the strategic spillovers, the constant $w$ from Remark \ref{rem:property-graphon}, and the Lagrange multiplier $\mu$.
\end{remark}
Theorem~\ref{thm:LQ-graphon} allows for a detailed description of the optimal intervention in terms of the principal components of $\boldsymbol{W}$. Namely, as we vary $\lambda$, equation \eqref{eq:main-graphon-cosine-similarity-LQ} shows that $\rho(\bar\theta,e_{\lambda,j})$ is proportional to the cosine similarity to the status quo $\rho(\theta,e_{\lambda,j})$ and the factor $\frac{w\al_\lambda}{\mu-w\al_\lambda}$. As we vary $\lambda$, the similarity ratio
$$
\bar{r}_\lambda:=\frac{\rho(\bar\theta,e_{\lambda,j})}{\rho(\theta,e_{\lambda,j})}
$$
is proportional to $\frac{w\al_\lambda}{\mu-w\al_\lambda}$, and therefore larger for the principal components in which the optimal intervention makes the largest change relative to the status quo standalone returns. We obtain the following corollary, which extends Corollary~1 from \cite{galeotti2020targeting} to the dynamic infinite-player setting.
\begin{corollary}\label{corollary:similarity-ratio-graphon}
    Suppose Assumptions~ \ref{assum:spectral-radius-graphon} and \ref{assum:property-graphon} hold. If the game is one of strategic complements ($\beta>0$), then $|\bar{r}_\lambda|$ is increasing in $\lambda$, $\P\otimes dt$-a.e.; if the game is one of strategic substitutes ($\beta<0$), then $|\bar{r}_\lambda|$ is decreasing in $\lambda$, $\P\otimes dt$-a.e.
\end{corollary}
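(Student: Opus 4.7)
The plan is to read off the explicit form of $\bar r_\lambda$ from Theorem~\ref{thm:LQ-graphon}, reduce the monotonicity in $\lambda$ to monotonicity in the amplification scalar $\al_\lambda=(1-\beta\lambda)^{-2}$, and then verify two sign conditions: (i) the derivative of $\al_\lambda$ in $\lambda$ has the sign of $\beta$, and (ii) the derivative of $|\bar r_\lambda|$ in $\al_\lambda$ has the sign of the Lagrange multiplier $\mu$, which I will argue is positive in both regimes of Assumption~\ref{assum:property-graphon}.

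First, by \eqref{eq:optimal-zeta-graphon-LQ} the scalar $\bar r_\lambda = \tfrac{w\al_\lambda}{\mu - w\al_\lambda}$ is well-defined because Theorem~\ref{thm:LQ-graphon} guarantees $\mu - w\al_\lambda>0$ for all $\lambda\in\sigma(\boldsymbol{W})$. Therefore
\[
|\bar r_\lambda| \;=\; \frac{|w|\,\al_\lambda}{\mu - w\al_\lambda}, \qquad
\frac{d}{d\al}\frac{|w|\al}{\mu-w\al} \;=\; \frac{|w|\mu}{(\mu-w\al)^2},
\]
so $|\bar r_\lambda|$ is strictly increasing in $\al_\lambda$ if and only if $\mu>0$. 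Next, since $\beta\lambda_1(\boldsymbol{W})<1$ and the spectrum is contained in the operator norm ball, one has $1-\beta\lambda>0$ for all $\lambda\in\sigma(\boldsymbol W)$ (this is the condition underlying Theorem~\ref{thm:LQ-graphon}); a direct differentiation then gives
\[
\frac{d\al_\lambda}{d\lambda} \;=\; \frac{2\beta}{(1-\beta\lambda)^3},
\]
which has the sign of $\beta$. Consequently $\al_\lambda$ is strictly increasing in $\lambda$ when $\beta>0$ and strictly decreasing when $\beta<0$.

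It remains to show $\mu>0$. If $w>0$, then by Theorem~\ref{thm:LQ-graphon} we have $\mu>w\al_\lambda>0$ already. If $w<0$, I will use the characterization \eqref{eqshadow-price-graphon-LQ} of $\mu$. Setting
\[
F(\mu) \;:=\; \sum_{\lambda\in\sigma(\boldsymbol{W})}\Big(\tfrac{w\al_\lambda}{\mu-w\al_\lambda}\Big)^2 \bigl\|\|\underline\theta(\lambda)\|_{\R^{m(\lambda)}}\bigr\|_{\mathcal{A}}^2,
\]
one checks by term-wise differentiation that $F$ is strictly decreasing on $\{\mu:\mu>w\al_\lambda \text{ for all }\lambda\}$, which contains $\mu=0$ since $w\al_\lambda<0$. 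Moreover, by Parseval applied to the orthonormal eigenbasis of $\boldsymbol W$,
\[
F(0) \;=\; \sum_{\lambda,j}\|\underline\theta(\lambda)_j\|_{\mathcal{A}}^2 \;=\; \|\theta\|_{\mathcal A^\infty}^2,
\]
which exceeds $C_B$ by the second part of Assumption~\ref{assum:property-graphon}. Since $F(\mu)=C_B<F(0)$ and $F$ is strictly decreasing, the unique solution $\mu$ must lie strictly above $0$. Combining the three pieces, the chain rule $\tfrac{d}{d\lambda}|\bar r_\lambda| = \tfrac{d|\bar r_\lambda|}{d\al_\lambda}\,\tfrac{d\al_\lambda}{d\lambda}$ has the sign of $\beta$, which is the claim.

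The only delicate step is the argument $\mu>0$ in the strategic-substitutes/negative-$w$ regime: it is where the otherwise technical-looking assumption $\|\theta\|_{\mathcal A^\infty}^2>C_B$ enters, and the $\P\otimes dt$-a.e.\ qualifier also originates here since $\mu$ is determined pathwise. Once that is handled, everything else is elementary calculus.
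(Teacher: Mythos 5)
Your proof is correct and fills in exactly the argument the paper leaves implicit (the paper sketches the intuition in the paragraph preceding the corollary but gives no formal proof). The reduction via the chain rule to (a) the sign of $d\al_\lambda/d\lambda$, which equals $\operatorname{sign}(\beta)$ since $1-\beta\lambda>0$ on $\sigma(\boldsymbol W)$, and (b) the sign of $\tfrac{d}{d\al}\tfrac{|w|\al}{\mu-w\al}=\tfrac{|w|\mu}{(\mu-w\al)^2}$, which equals $\operatorname{sign}(\mu)$, is the natural route, and your computations are right.

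The genuine contribution of your write-up is the verification that $\mu>0$ also holds when $w<0$. This is indeed where the hypothesis $\|\theta\|_{\mathcal{A}^\infty}^2>C_B$ from Assumption~\ref{assum:property-graphon} is essential: evaluating the budget map $F$ of \eqref{eqshadow-price-graphon-LQ} at $\mu=0$ gives, by Parseval, $F(0)=\|\theta\|_{\mathcal{A}^\infty}^2>C_B=F(\mu)$, and strict monotonicity of $F$ on $(\max_\lambda w\al_\lambda,\infty)\ni 0$ forces $\mu>0$. This is a clean and, as far as I can tell, necessary step that the paper never spells out. One small inaccuracy in your closing remark: $\mu$ is \emph{not} determined pathwise --- equation \eqref{eqshadow-price-graphon-LQ} determines $\mu$ as a single deterministic constant. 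The $\P\otimes dt$-a.e.\ qualifier in the corollary instead reflects that $\bar r_\lambda$ itself is a process (through the $\lambda$-independent prefactor $\|\theta\|_{L^2}/\|\bar\theta\|_{L^2}$) and that the identity \eqref{eq:optimal-zeta-graphon-LQ} holds only $\P\otimes dt$-a.e.; this does not affect the validity of your argument.
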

\paragraph{Small and large budgets.}
The conclusions we can draw from Theorem~\ref{thm:LQ-graphon} become especially salient, when we consider very small or very large budgets $C_B>0$. Namely, since by \eqref{eqshadow-price-graphon-LQ} $\mu$ is decreasing in $C_B$, if $w>0$ ($w<0$), $\frac{w\al_\lambda}{\mu-w\al_\lambda}$ is increasing (decreasing) in $C_B$. Furthermore, if $w>0$ ($w<0$), for all $\lambda,\lambda'$ with $\al_\lambda>\al_{\lambda'}$, it holds that $\bar{r}_\lambda/\bar{r}_{\lambda'}$ is increasing in $C_B$. The following proposition extends Proposition~1 from \cite{galeotti2020targeting} to the dynamic infinite-player setting.
\begin{proposition}\label{prop:convergence-graphon}
    Suppose that Assumptions~\ref{assum:spectral-radius-graphon} and \ref{assum:property-graphon} hold. Then:
    \begin{itemize}
        \item[(i)] As $C_B\to 0$, at the optimal intervention, $\frac{\bar{r}_\lambda}{\bar{r}_{\lambda'}}\to\frac{\al_\lambda}{\al_{\lambda'}}$.
        \item[(ii)] Let $\lambda_1$ ($\lambda_{s}$) denote the largest (smallest) eigenvalue of $\boldsymbol{W}$, and assume that it has multiplicity 1.  Then, if $\beta>0$ ($\beta<0$), as $C_B\to\infty$, it holds that $|\rho(\bar\theta,e_{\lambda_1,1})|\to 1$ ($|\rho(\bar\theta,e_{\lambda_{s},1})|\to 1$), $\P \otimes dt$-a.e.
    \end{itemize}
\end{proposition}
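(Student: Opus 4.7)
} The plan is to reduce both statements to an analysis of how the Lagrange multiplier $\mu$ depends on $C_B$, using the explicit characterization from Theorem~\ref{thm:LQ-graphon}. Define, for $\mu > w\al_{\lambda^*}$ where $\al_{\lambda^*} := \sup_{\lambda \in \sigma(\boldsymbol{W})} \al_\lambda$,
\begin{equation}
F(\mu) := \sum_{\lambda \in \sigma(\boldsymbol{W})} \Big(\frac{w\al_\lambda}{\mu - w\al_\lambda}\Big)^2 \big\|\|\underline{\theta}(\lambda)\|_{\R^{m(\lambda)}}\big\|^2_{\mathcal{A}}.
\end{equation}
By dominated convergence, $F$ is continuous and strictly decreasing on $(w\al_{\lambda^*}, \infty)$, satisfies $F(\mu) \downarrow 0$ as $\mu \to \infty$, and $F(\mu) \uparrow \infty$ as $\mu \downarrow w\al_{\lambda^*}$ (the latter because the single term indexed by $\lambda^*$ already explodes, using $\underline\theta(\lambda^*)_1 \neq 0$ from Assumption~\ref{assum:property-graphon}). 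Hence equation~\eqref{eqshadow-price-graphon-LQ} defines $\mu$ as a continuous, strictly decreasing function of $C_B$, with $\mu \to \infty$ as $C_B \to 0$ and $\mu \to w\al_{\lambda^*}$ as $C_B \to \infty$ (in the case $w>0$, which is the only regime compatible with $C_B \to \infty$ under Assumption~\ref{assum:property-graphon}).

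For part~(i), using \eqref{eq:main-graphon-cosine-similarity-LQ} the ratio $\bar{r}_\lambda / \bar{r}_{\lambda'}$ simplifies to the deterministic quantity
\begin{equation}
\frac{\bar{r}_\lambda}{\bar{r}_{\lambda'}} = \frac{\al_\lambda(\mu - w\al_{\lambda'})}{\al_{\lambda'}(\mu - w\al_\lambda)} = \frac{\al_\lambda}{\al_{\lambda'}} \cdot \frac{1 - w\al_{\lambda'}/\mu}{1 - w\al_\lambda/\mu},
\end{equation}
which converges to $\al_\lambda/\al_{\lambda'}$ as $\mu \to \infty$, giving the claim upon invoking the first limit from the previous paragraph.

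For part~(ii), the assumption $\beta > 0$ and the fact that $\lambda_1$ is the largest eigenvalue (with multiplicity $1$) yield $\al_{\lambda^*} = \al_{\lambda_1}$ and, crucially, that $\lambda_1$ is isolated in $\sigma(\boldsymbol{W})$, so that $\al_{\lambda_1} - \al_\lambda \geq \al_{\lambda_1} - \al_{\lambda_2} > 0$ for every $\lambda \in \sigma(\boldsymbol{W}) \setminus\{\lambda_1\}$ (where $\lambda_2$ is the second-largest eigenvalue). Using \eqref{eq:optimal-zeta-graphon-LQ} and orthonormality of the eigenbasis, for fixed $(\omega, t)$,
\begin{equation}
\rho\big(\bar\theta, e_{\lambda_1, 1}\big)^2 = \frac{\big(\tfrac{w\al_{\lambda_1}}{\mu - w\al_{\lambda_1}}\big)^2 \underline{\theta}(\lambda_1)_1^2}{\sum_{\lambda \in \sigma(\boldsymbol{W})} \big(\tfrac{w\al_\lambda}{\mu - w\al_\lambda}\big)^2 \|\underline{\theta}(\lambda)\|_{\R^{m(\lambda)}}^2},
\end{equation}
and the plan is to show that as $\mu \downarrow w\al_{\lambda_1}$ the $\lambda_1$-term dominates. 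The numerator diverges since $\underline{\theta}(\lambda_1)_1 \neq 0$ $\P \otimes dt$-a.e., while for $\lambda \neq \lambda_1$ the coefficient $\big(\tfrac{w\al_\lambda}{\mu - w\al_\lambda}\big)^2$ converges to the finite limit $\al_\lambda^2/(\al_{\lambda_1} - \al_\lambda)^2$, uniformly bounded in $\lambda$ by the spectral gap; combined with $\sum_\lambda \|\underline\theta(\lambda)\|^2 = \|\theta\|_{L^2}^2 < \infty$ $\P \otimes dt$-a.e.\ and dominated convergence, the denominator minus the numerator stays bounded, so the ratio tends to $1$. The case $\beta < 0$ is fully symmetric with $\lambda^* = \lambda_s$.

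The main obstacle is the last step: justifying that the non-$\lambda_1$ tail of the denominator is bounded uniformly as $\mu \downarrow w\al_{\lambda_1}$ when $\sigma(\boldsymbol{W})$ is infinite. The key observations are that $\lambda_1$ is isolated (providing a spectral gap that controls $\al_{\lambda_1} - \al_\lambda$ uniformly from below), that $\al_\lambda$ is uniformly bounded above on $\sigma(\boldsymbol{W}) \subset [-1, 1]$ by Assumption~\ref{assum:spectral-radius-graphon}, and that $\theta \in \mathcal{A}^\infty$ guarantees $L^2$-summability of the projections $\underline\theta(\lambda)$, so dominated convergence applies and yields the desired almost-sure convergence.
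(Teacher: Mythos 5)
Your proof is correct. Part~(i) follows essentially the same route as the paper: you compute the deterministic ratio
$\bar r_\lambda/\bar r_{\lambda'} = \al_\lambda(\mu - w\al_{\lambda'})/\bigl(\al_{\lambda'}(\mu - w\al_\lambda)\bigr)$
(the $(\omega,t)$-dependent factor $\|\theta\|_{L^2}/\|\bar\theta\|_{L^2}$ cancels) and exploit the monotone dependence of $\mu$ on $C_B$ from \eqref{eqshadow-price-graphon-LQ} to send $\mu\to\infty$. For part~(ii), however, you take a genuinely different route. The paper's argument, following \citet{galeotti2020targeting}, first shows $\rho(\bar\theta, e_{\lambda,j})\to 0$ for all $\lambda\neq\lambda_1$ and then closes by rewriting the budget equality \eqref{eqshadow-price-graphon-LQ} as an identity involving $\E\bigl[\int_0^T\|\bar\theta\|_{L^2}^2\,\rho(\bar\theta,e_{\lambda_1,1})^2\,dt\bigr]/C_B\to 1$, from which the a.e.\ convergence $\rho^2\to 1$ is extracted; this requires a (somewhat delicate) passage from convergence in a weighted $L^1$ sense to the pointwise $\P\otimes dt$-a.e.\ statement. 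Your argument instead works pointwise for fixed $(\omega,t)$: you express $\rho(\bar\theta,e_{\lambda_1,1})^2 = A/(A+B)$ with $A$ the $\lambda_1$-summand and $B$ the tail, show $A\to\infty$ using $\underline\theta(\lambda_1)_1\neq 0$ a.e.\ from Assumption~\ref{assum:property-graphon}, and bound $B$ uniformly over $\mu\in(w\al_{\lambda_1}, w\al_{\lambda_1}+\delta]$ via the spectral gap $\al_{\lambda_1}-\al_{\lambda_2}>0$ (guaranteed because $\lambda_1$ has multiplicity one and the spectrum of the compact operator $\boldsymbol{W}$ accumulates only at $0$) and the a.e.-finiteness of $\|\theta_t(\omega)\|_{L^2([0,1])}$. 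This pointwise decomposition is cleaner: it delivers the a.e.\ limit directly, makes the role of the spectral gap explicit, and sidesteps the measure-theoretic upgrading step. One minor imprecision in your write-up: the uniform upper bound on $\al_\lambda$ comes from $1-\beta\lambda\geq 1-\beta\lambda_1(\boldsymbol{W})>0$ (Assumption~\ref{assum:spectral-radius-graphon}), not merely from $\sigma(\boldsymbol{W})\subset[-1,1]$, but the conclusion is unaffected.
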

Proposition~\ref{prop:convergence-graphon} is a consequence of equation \eqref{eq:main-graphon-cosine-similarity-LQ}. As $C_B\to 0$, by equation \eqref{eqshadow-price-graphon-LQ}, $\mu$ goes to $\infty$, and the similarity ratio $\bar{r}_\lambda$ corresponding to each eigenvalue $\lambda$ becomes proportional to the corresponding amplification factor $\al_\lambda$. That means, that the optimal intervention is guided by all principal components of the network and weighted by the corresponding amplification factors. In the case where $C_B\to \infty$, by equation \eqref{eqshadow-price-graphon-LQ}, $\mu$ goes to $w\al_{\lambda_1}$ if $\beta>0$, and to $w\al_{\lambda_{s}}$ if $\beta<0$. Asymptotically, the optimal intervention is proportional to the one-dimensional principal component of $\boldsymbol{W}$ corresponding to $\lambda_1$ if $\beta>0$ and to $\lambda_{s}$ if $\beta<0$. This means that the optimal intervention is determined by only one of the infinitely many eigenfunctions of $\boldsymbol{W}$.

\paragraph{Simple interventions.}
While Proposition~\ref{prop:convergence-graphon} describes the convergence of the cosine similarities for asymptotic budgets $C_B$, it doesn't bound the corresponding convergence rate. In order to understand better how well the asymptotic case $C_B\to\infty$  approximates the case of large $C_B<\infty$, we next generalize Proposition~2 from \cite{galeotti2020targeting}, which, depending on the budget $C_B$, gives a bound on how close the aggregate utility and cosine similarity are to the asymptotic case. 
Recall that for $\lambda\in\sigma(\boldsymbol{W}),\ j\in\{1,\ldots, m(\lambda)\}$, the notation $\underline{\theta}(\lambda)_j$ denotes the $j$-th coordinate of the projection of $\theta$ onto the eigenspace corresponding to $\lambda$.

\begin{definition}
    Let $\lambda_1$ ($\lambda_{s}$) denote the largest (smallest) eigenvalue of $\boldsymbol{W}$, and assume that it has multiplicity 1. Define the $\F$-progressively measurable processes $\Omega\times[0,T]\to\R$ given by 
    $$
    c_t^+:=\sqrt{C_B}\frac{\underline{\theta}_t(\lambda_1)_1}{\|\underline{\theta}(\lambda_1)_1\|_{\mathcal{A}}},\quad 
    c_t^-:=\sqrt{C_B}\frac{\underline{\theta}_t(\lambda_{s})_1}{\|\underline{\theta}(\lambda_{s})_1\|_{\mathcal{A}}},\quad 0\leq t\leq T. 
    $$
    An intervention $\hat\theta\in\mathcal{A}^\infty$ is called simple if for $\nu$-a.e.~$x\in [0,1],$
    \begin{itemize}
    \item $\hat\theta^x_t=c_t^+e_{\lambda_1,1}(x)$, when the game has strategic complements ($\beta>0$),
    \item $\hat\theta^x_t=c^-_te_{\lambda_{s},1}(x)$, when the game has strategic substitutes ($\beta<0$),
    \end{itemize}
    where $e_{\lambda_{1},1}$ and $e_{\lambda_{s},1}$ are uniquely determined up to multiplication by $-1$.
\end{definition}
Consistent with \cite{galeotti2020targeting}, these interventions are called simple, since the intervention at each $x$, up to the scaling by the process $c^+$ ($c^-$), is only determined by the value of the eigenfunction $e_{\lambda_1,1}$ ($e_{\lambda_{s},1}$) depending on the underlying graphon. However, in contrast to the static case, where simple interventions factorize into an eigenvector of the underlying graph's adjacency matrix and a scalar, we obtain a factorization into an eigenfunction of the graphon operator and a process in $\mathcal{A}$ to account for the dynamicity.
Here the processes $c^+$ and $c^-$ are determined by equation \eqref{eq:optimal-zeta-graphon-LQ} and the fact that the constraint in \eqref{eq:graphon-int-problem-LQ} is binding at the optimal intervention. In particular, $\|c^+\|_{\mathcal{A}}=\|c^-\|_{\mathcal{A}}=\sqrt{C_B}$.

Let $T_{\operatorname{opt}}$ and $T_{\operatorname{sim}}$ denote the average utility under the optimal and simple interventions, respectively. Denote by $\lambda_2$ and $\lambda_{s-1}$ the second largest and second smallest eigenvalue of $\boldsymbol{W}$, respectively. 

\begin{proposition}\label{prop:convergence-rate-graphon}
    Suppose Assumptions~ \ref{assum:spectral-radius-graphon} and \ref{assum:property-graphon} hold with $\tilde w>-\frac{1}{2}$.
    \begin{itemize}
        \item If $\beta>0$ and $\lambda_1$ has multiplicity $1$, then for any $\delta>0$, if $\smash{C_B>\frac{2}{\delta}\|\theta\|_{\mathcal{A}^\infty}^2(\frac{\al_{\lambda_2}}{\al_{\lambda_1}-\al_{\lambda_2}})^2}$, then $T_{\operatorname{opt}}/T_{\operatorname{sim}}<1+\delta$ and $\rho(\|\bar\theta\|_{\mathcal{A}},c^+e_{\lambda_1,1})>\sqrt{1-\delta}$.
        \item If $\beta<0$ and $\lambda_{s}$ has multiplicity $1$, then for any $\delta>0$, if $\smash{C_B>\frac{2}{\delta}\|\theta\|_{\mathcal{A}^\infty}^2(\frac{\al_{\lambda_{s-1}}}{\al_{\lambda_s}-\al_{\lambda_{s-1}}})^2}$, then $T_{\operatorname{opt}}/T_{\operatorname{sim}}<1+\delta$ and $\rho(\|\bar\theta\|_{\mathcal{A}},c^-e_{\lambda_{s},1})>\sqrt{1-\delta}$.
    \end{itemize}
\end{proposition}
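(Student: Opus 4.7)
The plan is to work in the spectral coordinates of $\boldsymbol{W}$ provided by Theorem~\ref{thm:LQ-graphon} and exploit the explicit form of $\bar\theta$. I treat the strategic complements case $\beta>0$; the substitutes case is symmetric with $\al_\lambda$ decreasing in $\lambda$, so that $\lambda_s$ and $\lambda_{s-1}$ take the roles of $\lambda_1$ and $\lambda_2$. Writing $\gamma_\lambda := w\al_\lambda/(\mu - w\al_\lambda)$, Theorem~\ref{thm:LQ-graphon} gives $\underline{\bar\theta}(\lambda)_j = \gamma_\lambda\underline\theta(\lambda)_j$ and the binding budget $C_B = \sum_{\lambda,j}\gamma_\lambda^2\|\underline\theta(\lambda)_j\|_{\mathcal{A}}^2$. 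Since $\tilde w>-1/2$, Remark~\ref{rem:property-graphon} gives $T(\hat\theta) = w\|\bar a\|_{\mathcal{A}^\infty}^2$ with $w := \tilde w + 1/2 > 0$, and Parseval applied to \eqref{eq-LQ:Nash-in-PC-graphon} yields
\begin{align*}
T_{\operatorname{opt}} &= w\sum_{\lambda,j}\al_\lambda(1+\gamma_\lambda)^2\|\underline\theta(\lambda)_j\|_{\mathcal{A}}^2,\\
T_{\operatorname{sim}} &= w\al_{\lambda_1}\bigl(\|\underline\theta(\lambda_1)_1\|_{\mathcal{A}} + \sqrt{C_B}\bigr)^2 + w\sum_{(\lambda,j)\neq(\lambda_1,1)}\al_\lambda\|\underline\theta(\lambda)_j\|_{\mathcal{A}}^2,
\end{align*}
where the $T_{\operatorname{sim}}$ formula uses that $c^+ = \sqrt{C_B}\,\underline\theta(\lambda_1)_1/\|\underline\theta(\lambda_1)_1\|_{\mathcal{A}}$ is positively collinear with $\underline\theta(\lambda_1)_1$ and has $\mathcal{A}$-norm $\sqrt{C_B}$.

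The core estimate splits $T_{\operatorname{opt}}-T_{\operatorname{sim}}$ into a $(\lambda_1,1)$ part and a remainder. On $(\lambda_1,1)$ the budget equation yields $\gamma_{\lambda_1}\|\underline\theta(\lambda_1)_1\|_{\mathcal{A}} \leq \sqrt{C_B}$, so the $(\lambda_1,1)$ contribution to $T_{\operatorname{opt}}$ is no larger than to $T_{\operatorname{sim}}$ and drops out, leaving $T_{\operatorname{opt}}-T_{\operatorname{sim}} \leq w\sum_{(\lambda,j)\neq(\lambda_1,1)}\al_\lambda(2\gamma_\lambda+\gamma_\lambda^2)\|\underline\theta(\lambda)_j\|_{\mathcal{A}}^2$. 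The key spectral-gap observation is that $\mu > w\al_{\lambda_1}$ combined with $\al_\lambda \leq \al_{\lambda_2}$ for $\lambda\neq\lambda_1$ forces $\gamma_\lambda < K := \al_{\lambda_2}/(\al_{\lambda_1}-\al_{\lambda_2})$. Bounding $\al_\lambda \leq \al_{\lambda_2}$ in the remainder and applying Parseval for $\theta$ gives $T_{\operatorname{opt}}-T_{\operatorname{sim}} \leq w\al_{\lambda_2}K(2+K)\|\theta\|_{\mathcal{A}^\infty}^2$, while the $(\lambda_1,1)$ term of $T_{\operatorname{sim}}$ alone gives $T_{\operatorname{sim}} \geq w\al_{\lambda_1}C_B$. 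Substituting the assumed $C_B > (2/\delta)K^2\|\theta\|_{\mathcal{A}^\infty}^2$ and using the identity $\al_{\lambda_2}(2+K)/(2\al_{\lambda_1}K) = 1-\al_{\lambda_2}/(2\al_{\lambda_1}) < 1$ yields $T_{\operatorname{opt}}/T_{\operatorname{sim}}-1 < \delta$.

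The cosine-similarity bound follows from the same spectral-gap estimate: the binding budget gives $\|\bar\theta\|_{\mathcal{A}^\infty}^2 = C_B$, while $\gamma_\lambda < K$ for $\lambda\neq\lambda_1$ yields $\sum_{(\lambda,j)\neq(\lambda_1,1)}\gamma_\lambda^2\|\underline\theta(\lambda)_j\|_{\mathcal{A}}^2 \leq K^2\|\theta\|_{\mathcal{A}^\infty}^2 < \delta C_B/2$, so $\gamma_{\lambda_1}^2\|\underline\theta(\lambda_1)_1\|_{\mathcal{A}}^2 > (1-\delta/2)C_B$. Since $\underline{\bar\theta}(\lambda_1)_1 = \gamma_{\lambda_1}\underline\theta(\lambda_1)_1$ is positively collinear with $c^+$ and both $\bar\theta$ and $c^+e_{\lambda_1,1}$ have $\mathcal{A}^\infty$-norm $\sqrt{C_B}$, the Parseval inner product formula gives the cosine similarity $\gamma_{\lambda_1}\|\underline\theta(\lambda_1)_1\|_{\mathcal{A}}/\sqrt{C_B} > \sqrt{1-\delta/2} > \sqrt{1-\delta}$. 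The main obstacle I expect is the bookkeeping in the first estimate: the simple intervention \emph{overshoots} the optimum on the $(\lambda_1,1)$ component while \emph{undershooting} on every other component, with both effects coupled through $\mu$; leveraging the overshoot to cancel precisely the right amount of the undershoot via the spectral gap $K$ is what produces the sharp $2/\delta$ prefactor in the stated sufficient condition.
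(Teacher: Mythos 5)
Your proof is correct and follows essentially the same route as the paper's (the paper sketches but omits the detailed argument, referring back to \cite{galeotti2020targeting}, and the authors' own argument proceeds exactly as you do). Both rely on the same three ingredients: (a) the observation that the simple intervention overshoots the optimum on the top principal component, so the $(\lambda_1,1)$ contributions to $T_{\operatorname{opt}}-T_{\operatorname{sim}}$ cancel favorably and $T_{\operatorname{sim}}\geq w\al_{\lambda_1}C_B$; (b) the spectral-gap bound $\gamma_\lambda = w\al_\lambda/(\mu-w\al_\lambda) < \al_{\lambda_2}/(\al_{\lambda_1}-\al_{\lambda_2})$ for $\lambda\neq\lambda_1$, which follows from $\mu>w\al_{\lambda_1}$ and the monotonicity of $\al_\lambda$; and (c) Parseval applied to the binding budget constraint to control both the remainder in the welfare ratio and the mass of $\bar\theta$ off the $(\lambda_1,1)$ component. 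Your closing identity $\al_{\lambda_2}(2+K)/(2\al_{\lambda_1}K)=1-\al_{\lambda_2}/(2\al_{\lambda_1})$ is the same as the paper's simplification $(2\al_{\lambda_1}-\al_{\lambda_2})K^2\leq 2\al_{\lambda_1}K^2$, just algebraically rearranged.
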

Proposition~\ref{prop:convergence-rate-graphon} characterizes the size of the budget $C_B$ beyond which simple interventions achieve most of the welfare and the optimal intervention resembles the simple intervention. This size depends on the status quo standalone returns and the graphon via the fraction ${\al_{\lambda_2}}/({\al_{\lambda_1}-\al_{\lambda_2}})$ and $\al_{\lambda_{s-1}}/({\al_{\lambda_s}-\al_{\lambda_{s-1}}})$, respectively. In general, the budget needs to be larger for large status quo standalone returns and small spectral gaps $\al_{\lambda_1}-\al_{\lambda_2}$ at the top of the spectrum (and $\al_{\lambda_s}-\al_{\lambda_{s-1}}$ at the bottom of the spectrum, respectively). This result extends the results of Proposition~2 from \cite{galeotti2020targeting} from static finite-player games to the dynamic infinite-player setting.

The proofs of Propositions~\ref{prop:convergence-graphon} and \ref{prop:convergence-rate-graphon} follow by the same generalization of Propositions~1 and 2 in \cite{galeotti2020targeting} that underlies our derivation of Theorem~\ref{thm:LQ-graphon} from their Theorem~1 (see Section~\ref{sec:LQ-proofs}); we therefore omit them for brevity.

\section{Proofs of the Results in Section~\ref{sec:underlying-game}}\label{sec:underlying-game-proofs}

\begin{proof}[Proof of Proposition~\ref{prop:correspondence}] 
Let $\bar a$ be a Nash equilibrium of the game $\mathcal{G}(\mathcal{A}_{ad}^\infty,U,\theta^N_{\operatorname{step}},W_{G^N})$, where $\mathcal{A}^x:=\mathcal{A}^{i,N}$ for all $x\in\mathcal{P}_i^N$. Then, since $W_{G^N}$ is a step function with respect to $\smash{\mathcal{P}^N}$, the local aggregate 
$$ z^x(\bar a)=\int_0^1W_{G^N}(x,y)\bar a^ydy$$
is a step function with respect to $\mathcal{P}^N$ too. Let $\bar z^{i,N}$ be the value of $z(\bar a)$ on $\mathcal{P}_i^N$. Then it follows from Definition~\ref{def:Nash-infinite-player} of a Nash equilibrium in the graphon game that
$$
\bar a^x=\argmax_{\tilde a\in\mathcal{A}^x}U(\tilde a, z^{x}(\bar a),\theta^{x,N}_{\operatorname{step}})=
\argmax_{\tilde a\in\mathcal{A}^{i,N}}U(\tilde a, \bar z^{i,N},\theta^{i,N}),\quad \textrm{ for all }x\in\mathcal{P}_i^N,
$$
which implies that $\bar a$ is a step function action profile with respect to $\mathcal{P}^N$. Let $\bar a^{i,N}$ be the value of $\bar a$ on $\mathcal{P}_i^N$.
Then 
$$\bar z^{i,N}=\int_0^1W_{G^N}(x,y)\bar a^ydy=\frac{1}{N}\sum_{j=1}^N G_{ij}^N\bar a^{j,N},
$$
and $\bar a$ is a graphon Nash equilibrium if and only if
$$
\bar a^{i,N}=\argmax_{\tilde a\in\mathcal{A}^{i,N}}U(\tilde a, \bar z^{i,N},\theta^{i,N}), \quad  \bar z^{i,N}=\frac{1}{N}\sum_{j=1}^N G_{ij}^N\bar a^{j,N},\quad \textrm{ for all } i\in\{1,\ldots,N\}.
$$
Since the latter aligns exactly with Definition~\ref{def:Nash-finite-player} of a Nash equilibrium in the network game $\mathcal{G}(\mathcal A_{ad}^N,U,\theta^N,G^N)$, this completes the proof.
\end{proof}

The following sensitivity result for solutions to variational inequalities on Hilbert spaces is needed for the proof of Theorem~\ref{thm:NE}. It follows from \cite{kinderlehrer2000introduction}, Chapter~2.2, Theorem~2.1.
\begin{lemma}\label{lemma:VI}
Let $H$ be a real Hilbert space with inner product $\langle\cdot,\cdot\rangle_H$ and norm $\|\cdot\|_H$. Let $F:H\times H\to\R$ be a bilinear form on $H$ which is coercive with constant $\alpha>0$, that is, $F(v,v)\geq \alpha\|v\|_H^2$ for all $v\in H$. Let $V\subset H$ be a closed and convex subset and $r_1,r_2\in H$. Then, each of the variational inequalities 
$$
F(u,v-u)\geq \langle r_i,v-u\rangle_H,\quad \textrm{for all }v\in V, \ i=1,2,
$$
admits a unique solution $u_i\in H$, $i=1,2$ (respectively), and it holds that 
$$
\|u_1-u_2\|_H\leq \frac{1}{\alpha}\|r_1-r_2\|_H.
$$
\end{lemma}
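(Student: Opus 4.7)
The plan is to treat the lemma as having two parts: the existence and uniqueness assertion, and the sensitivity estimate. For the first part I would just invoke the classical Stampacchia theorem (the cited Theorem~2.1 in \cite{kinderlehrer2000introduction}, Chapter~2.2), which gives, under coercivity of the bilinear form $F$ on the real Hilbert space $H$ and for a closed convex $V\subset H$, a unique $u_i\in V$ solving
\[
F(u_i,v-u_i)\geq \langle r_i, v-u_i\rangle_H,\qquad \text{for all } v\in V,\ i=1,2.
\]
This reduces the lemma to establishing the quantitative comparison $\|u_1-u_2\|_H\leq \alpha^{-1}\|r_1-r_2\|_H$.

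For the sensitivity estimate, the key step is to use each solution as a test vector in the variational inequality for the other. Since $u_1,u_2\in V$, setting $v=u_2$ in the inequality for $u_1$ and $v=u_1$ in the inequality for $u_2$ yields
\[
F(u_1,u_2-u_1)\geq \langle r_1, u_2-u_1\rangle_H,\qquad F(u_2,u_1-u_2)\geq \langle r_2, u_1-u_2\rangle_H.
\]
Adding these two inequalities and using bilinearity to rewrite the left-hand side as $-F(u_1-u_2,u_1-u_2)$, I obtain
\[
-F(u_1-u_2,u_1-u_2)\geq \langle r_1-r_2, u_2-u_1\rangle_H,
\]
equivalently $F(u_1-u_2,u_1-u_2)\leq \langle r_1-r_2,u_1-u_2\rangle_H$.

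Finally, I would apply coercivity of $F$ on the left and the Cauchy--Schwarz inequality on the right to get
\[
\alpha\|u_1-u_2\|_H^2 \leq F(u_1-u_2,u_1-u_2)\leq \|r_1-r_2\|_H\,\|u_1-u_2\|_H,
\]
and divide by $\|u_1-u_2\|_H$ (the estimate is trivial in the edge case $u_1=u_2$) to conclude. I do not foresee a genuine obstacle here: the existence/uniqueness part is a citation and the stability estimate is a short computation once both solutions are tested symmetrically. The only point worth flagging is the implicit need for $F$ to be continuous (bounded) so that the Stampacchia theorem applies; this is standard and is part of the hypotheses carried over from the cited reference.
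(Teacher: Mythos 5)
Your proof is correct and follows the same route as the paper, which simply cites the Lions--Stampacchia theorem (Theorem~2.1 in Chapter~2.2 of Kinderlehrer--Stampacchia) for both the existence/uniqueness claim and the stability estimate; your symmetric testing of $v=u_2$ and $v=u_1$ followed by coercivity and Cauchy--Schwarz is exactly the classical argument packaged in that theorem. You are also right to flag that the lemma as stated tacitly omits the boundedness of $F$, which is needed for the Stampacchia existence result and should be read as carried over from the cited reference.
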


In order to prove Theorem~\ref{thm:NE}, we introduce an auxiliary operator. Namely, given a heterogeneity profile $\theta\in\mathcal{A^\infty}$, define the best-response operator $\boldsymbol{B}_\theta$ with domain $\mathcal{A^\infty}$ by
\be \label{def-b}
(\boldsymbol{B}_\theta z)(x):=\argmax_{\tilde{a}\in \mathcal{A}^x} U\big(\tilde{a}, z^x,\theta^x\big), \quad z\in\mathcal{A}^\infty,
\ee
which assigns to any fixed local aggregate $z^x$ (not necessarily of the form $z^x(a)$) the best response of player $x$. The argmax in \eqref{def-b} exists and is unique due to Assumption~\ref{assum:U}. As shown in the following Lemma~\ref{lemma:B_theta}, the image of $\boldsymbol{B}_\theta$ is contained in $\mathcal{A}^\infty$, turning it into a well-defined operator from $\mathcal{A}^\infty$ to $\mathcal{A}^\infty$.
\begin{lemma}\label{lemma:B_theta}
Under Assumptions~\ref{assum:U} and \ref{assum:graphon}, the best-response operator $\boldsymbol{B}_\theta$ satisfies the following:
\begin{itemize}
\item[(i)] $\boldsymbol{B}_\theta$ is jointly Lipschitz continuous, that is,
$$
\left\|\boldsymbol{B}_{\theta_1}z_1-\boldsymbol{B}_{\theta_2}z_2\right\|_{\mathcal{A}^\infty}\leq\frac{1}{\alpha_U}\big(\ell_U\|z_1-z_2\|_{\mathcal{A}^\infty}+\ell_\theta\|\theta_1-\theta_2\|_{\mathcal{A}^\infty}\big),
$$
for all $z_1,z_2\in\mathcal{A}^\infty$ and $\theta_1,\theta_2\in\mathcal{A}^\infty$. 
\item[(ii)]  The image of $\boldsymbol{B}_\theta$ is contained in $\mathcal{A}^\infty$, that is, $\boldsymbol{B}_\theta(\mathcal{A}^\infty)\subset\mathcal{A}^\infty$.  
\item[(iii)]  If additionally Assumption~\ref{assum:Ax} holds, the image of $\boldsymbol{B}_\theta$ is contained in 
$$\mathcal{A}^\infty_M:=\left\{a\in\mathcal{A}^\infty\Big| \|a\|_{\mathcal{A}^\infty}\leq M\right\}.$$
\end{itemize}
\end{lemma}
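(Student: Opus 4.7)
The plan has three parts, with (i) carrying the analytic content and (ii)--(iii) following as corollaries. For part (i), I would argue pointwise in $x\in[0,1]$ and then integrate. Writing $a_i^x := (\boldsymbol{B}_{\theta_i} z_i)(x)$ for $i=1,2$, the strong concavity of $U(\cdot,z_i^x,\theta_i^x)$ on the closed convex set $\mathcal{A}^x$ together with the continuous G\^ateaux differentiability in Assumption~\ref{assum:U} yields the first-order variational inequality
\begin{equation*}
\bigl\langle \nabla_{\tilde a}U(a_i^x,z_i^x,\theta_i^x),\, \tilde a - a_i^x\bigr\rangle_{\mathcal A} \leq 0,\quad \forall\,\tilde a\in\mathcal{A}^x,\ i=1,2.
\end{equation*}
Testing the $i=1$ inequality with $\tilde a=a_2^x$, the $i=2$ inequality with $\tilde a=a_1^x$ (both admissible since $a_1^x,a_2^x\in\mathcal{A}^x$), and adding gives
\begin{equation*}
\bigl\langle \nabla_{\tilde a}U(a_1^x,z_1^x,\theta_1^x) - \nabla_{\tilde a}U(a_2^x,z_2^x,\theta_2^x),\, a_2^x - a_1^x\bigr\rangle_{\mathcal A}\leq 0.
\end{equation*}
Inserting $\pm\nabla_{\tilde a}U(a_2^x,z_1^x,\theta_1^x)$ splits the left-hand side; the first chunk is bounded below by $\alpha_U\|a_1^x-a_2^x\|_{\mathcal A}^2$ via the equivalent characterization of strong concavity in $\tilde a$, while Cauchy--Schwarz on the second chunk together with the Lipschitz continuity of $\nabla_{\tilde a}U$ in $(\tilde z,\tilde\theta)$ produces the pointwise estimate
\begin{equation*}
\|a_1^x-a_2^x\|_{\mathcal A}\leq \tfrac{1}{\alpha_U}\bigl(\ell_U\|z_1^x-z_2^x\|_{\mathcal A}+\ell_\theta\|\theta_1^x-\theta_2^x\|_{\mathcal A}\bigr).
\end{equation*}
Squaring, integrating over $x\in[0,1]$, and applying Minkowski's inequality in $L^2([0,1],dx)$ to the right-hand side converts this into the claimed inequality in the $\|\cdot\|_{\mathcal{A}^\infty}$-norm.

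For part (ii), the Lipschitz bound from part~(i) allows a direct comparison to the baseline profile supplied by Assumption~\ref{assum:graphon}. Denoting by $a_0\in\mathcal{A}^\infty$ the profile $a_0^x:=\argmax_{\tilde a\in\mathcal{A}^x}U(\tilde a,\tilde z_0,\tilde\theta_0)$, and letting $z_0,\theta_0\in\mathcal{A}^\infty$ be the profiles constantly equal in $x$ to $\tilde z_0,\tilde\theta_0$, part~(i) yields
\begin{equation*}
\|\boldsymbol{B}_\theta z - a_0\|_{\mathcal{A}^\infty}\leq \tfrac{1}{\alpha_U}\bigl(\ell_U\|z-z_0\|_{\mathcal{A}^\infty}+\ell_\theta\|\theta-\theta_0\|_{\mathcal{A}^\infty}\bigr)<\infty,
\end{equation*}
so $\boldsymbol{B}_\theta z$ has finite $\mathcal{A}^\infty$-norm. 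For the joint $\mathcal{I}\otimes\mathcal{B}([0,T])\otimes\mathcal{F}$-measurability demanded by Definition~\ref{def:action-profile}, I would approximate $(z,\theta)$ by simple $\mathcal{A}^\infty$-profiles $(z^{(n)},\theta^{(n)})$ that are step functions in $x$; for each such simple input the best-response profile is plainly jointly measurable, and passing to the $\mathcal{A}^\infty$-limit using part~(i)'s Lipschitz continuity preserves measurability. Part~(iii) is then immediate: Assumption~\ref{assum:Ax} forces $(\boldsymbol{B}_\theta z)(x)\in\mathcal{A}^x\subset\mathcal{A}_M$ for every $x$, so $\|(\boldsymbol{B}_\theta z)(x)\|_{\mathcal A}\leq M$ pointwise, and integrating in $x$ yields $\|\boldsymbol{B}_\theta z\|_{\mathcal{A}^\infty}\leq M$.

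The principal obstacle is the joint measurability verification in part~(ii): the analytic bound from part~(i) is robust, but transferring the pointwise-in-$x$ best-response construction into a genuinely $\mathcal{I}\otimes\mathcal{B}([0,T])\otimes\mathcal{F}$-measurable object takes some care in this infinite-dimensional setting. The step-function approximation above is the cleanest route; alternatively, a measurable-selection argument exploiting uniqueness of the argmax (granted by strong concavity) together with continuity of the $x$-wise best-response map in $(\tilde z,\tilde\theta)$ would also suffice.
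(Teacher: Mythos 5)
Your proposal is correct, and the overall architecture matches the paper's: characterize each pointwise best response as the unique solution of a variational inequality, derive the pointwise Lipschitz estimate, then integrate in $x$. The one substantive difference is how part~(i) extracts the pointwise bound. You add the two variational inequalities (testing each against the other's solution), insert $\pm\nabla_{\tilde a}U(a_2^x,z_1^x,\theta_1^x)$, and use strong monotonicity of $-\nabla_{\tilde a}U$ plus Cauchy--Schwarz directly; this is fully self-contained. The paper instead recasts the two VIs in the form required by a cited sensitivity result for variational inequalities (Lemma~\ref{lemma:VI}, from Kinderlehrer--Stampacchia) with a ``bilinear form'' $F^x(\tilde a,\tilde b)=-\langle\nabla_{\tilde a}U(\tilde a,z_1^x,\theta_1^x),\tilde b\rangle_{\mathcal{A}}$. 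That form is linear in $\tilde b$ but not, in general, in $\tilde a$, so the paper's invocation of a bilinear-form lemma is somewhat loose; your direct monotonicity argument sidesteps this and is arguably cleaner, reaching the identical estimate. A second difference: in part~(ii) you explicitly raise the joint $\mathcal{I}\otimes\mathcal{B}([0,T])\otimes\mathcal{F}$-measurability required by Definition~\ref{def:action-profile} and sketch a step-function approximation or measurable-selection argument; the paper's proof of~(ii) only verifies finiteness of the $\mathcal{A}^\infty$-norm and leaves the measurability implicit. Your flagging of this point is a genuine addition. Parts~(ii) (the norm bound via comparison to the baseline profile from Assumption~\ref{assum:graphon}) and~(iii) otherwise coincide with the paper.
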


\begin{proof}[Proof of Lemma~\ref{lemma:B_theta}] Recall the definitions in \eqref{eq:A} and \eqref{eq:Ainfty} of the Hilbert spaces $\mathcal{A}$ and $\mathcal{A}^\infty$ and their norms.  

(i) Let $z_1,z_2, \theta_1,\theta_2\in\mathcal{A}^\infty$. By Assumption~\ref{assum:U}, for any $x\in[0,1]$, $U(\tilde{a},z_1^x,\theta_1^x)$ is $\alpha_U$-strongly concave and G\^ateaux differentiable in $\tilde{a}$. Therefore, the negative G\^ateaux gradient $-\nabla_{\tilde a}U(\cdot,z_1^x,\theta_1^x)$ is $\alpha_U$-strongly monotone, that is, 
\be\label{eq:monotone}
\big \langle \tilde{a}-\tilde{b}, -\nabla_{\tilde a}U(\tilde{a},z_1^x,\theta_1^x)-\big(-\nabla_{\tilde a}U(\tilde{b},z_1^x,\theta_1^x)\big)\big\rangle_\mathcal{A}\geq \alpha_U\|\tilde a-\tilde b \|_{\mathcal{A}}^2,\quad \textrm{for all }\tilde{a},\tilde{b}\in\mathcal{A},
\ee
(see \cite{bauschke2017}, Chapter~17, Exercise 17.5). Consider the corresponding bilinear form $F^x$ on $\mathcal{A}$ given by 
\be\label{eq:Fx}
F^x(\tilde a,\tilde b):= -\langle \nabla_{\tilde a}U(\tilde{a},z_1^x,\theta_1^x), \tilde{b}\rangle_{\mathcal{A}},\quad \tilde{a},\tilde{b}\in\mathcal{A}.
\ee
Setting $\tilde b=0$ in \eqref{eq:monotone} shows that this bilinear form is coercive with constant $\alpha_U$, that is,
\be
F^x(\tilde a,\tilde a)=-\langle \nabla_{\tilde a}U(\tilde{a},z_1^x,\theta_1^x), \tilde{a}\rangle_\mathcal{A}\geq \alpha_U \|\tilde{a}\|^2_\mathcal{A},\quad \textrm{for all }\tilde{a}\in\mathcal{A}.
\ee
Next, note that the unique best-responses $(\boldsymbol{B}_{\theta_1}z_1)(x)$ and $(\boldsymbol{B}_{\theta_2}z_2)(x)$ are given by the unique solutions to the variational inequalities
\be\label{eq:VI1}
\langle \nabla_{\tilde a}U(\tilde{a},z_1^x,\theta_1^x), \tilde{b}- \tilde{a}\rangle_\mathcal{A}\leq 0, \quad  \textrm{for all } \tilde{b}\in\mathcal{A}^x,
\ee
and 
\be\label{eq:VI2}
\langle \nabla_{\tilde a}U(\tilde{a},z_2^x,\theta_2^x), \tilde{b}- \tilde{a}\rangle_\mathcal{A}\leq 0, \quad  \textrm{for all } \tilde{b}\in\mathcal{A}^x,
\ee
respectively. Recalling \eqref{eq:Fx}, inequality \eqref{eq:VI1} can equivalently be written as 
\be\label{eq:VI1-rewritten}
F^x(\tilde a,\tilde b-\tilde a)\geq 0, \quad  \textrm{for all } \tilde{b}\in\mathcal{A}^x,
\ee
and, adding $F^x(\tilde a,\tilde b-\tilde a)$ to both of its sides, inequality \eqref{eq:VI2} can equivalently be written as 

\be
F^x(\tilde a,\tilde b-\tilde a)\geq \langle \nabla_{\tilde a}U(\tilde{a},z_2^x,\theta_2^x), \tilde{b}- \tilde{a}\rangle_\mathcal{A}+F^x(\tilde a,\tilde b-\tilde a),\quad  \textrm{for all } \tilde{b}\in\mathcal{A}^x,
\ee
which by \eqref{eq:Fx} simplifies to 
\be\label{eq:VI2-rewritten}
F^x(\tilde a,\tilde b-\tilde a)\geq \langle \nabla_{\tilde a}U(\tilde{a},z_2^x,\theta_2^x)-\nabla_{\tilde a}U(\tilde{a},z_1^x,\theta_1^x), \tilde{b}- \tilde{a}\rangle_\mathcal{A},\quad  \textrm{for all } \tilde{b}\in\mathcal{A}^x.
\ee
Now an application of Lemma~\ref{lemma:VI} to the variational inequalities \eqref{eq:VI1-rewritten} and \eqref{eq:VI2-rewritten} (which are equivalent to \eqref{eq:VI1} and \eqref{eq:VI2}, respectively) with $r_1=0$, $r_2=\nabla_{\tilde a}U(\tilde{a},z_2^x,\theta_2^x)-\nabla_{\tilde a}U(\tilde{a},z_1^x,\theta_1^x)$ implies for any $x\in [0,1]$, 
\be\begin{aligned}\label{eq:sensitivity}
\|(\boldsymbol{B}_{\theta_1}z_1)(x)-(\boldsymbol{B}_{\theta_2}z_2)(x)\|_\mathcal{A}
&\leq \frac{1}{\alpha_U}\big\|\nabla_{\tilde a}U\big((\boldsymbol{B}_{\theta_2}z_2)(x),z_2^x,\theta_2^x\big)-\nabla_{\tilde a}U\big((\boldsymbol{B}_{\theta_2}z_2)(x),z_1^x,\theta_1^x\big) \big\|_\mathcal{A}\\
&\leq \frac{1}{\alpha_U}\big(\ell_U\|z_1^x-z_2^x\|_\mathcal{A}+\ell_\theta\|\theta_1^x-\theta_2^x \|_\mathcal{A} \big),
\end{aligned}\ee
where the second inequality follows from the fact that $\nabla_{\tilde a} U(\cdot,\tilde z,\tilde \theta)$ is Lipschitz continuous in $\tilde z,\tilde\theta$ with constants $\ell_U,\ell_\theta$ by Assumption~\ref{assum:U}. Finally, applying the norm $\|\cdot\|_{L^2}$ on $L^2([0,1],\R)$ to both sides of inequality \eqref{eq:sensitivity} and using the triangle inequality completes the proof. 

(ii) Let $\tilde z_0, \tilde\theta_0 \in\mathcal{A}$ denote the processes from Assumption~\ref{assum:graphon}. Consider the aggregate $z_0\in\mathcal{A}^\infty$ defined by $z_0^x:=\tilde z_0$ for all $x\in [0,1]$ and the heterogeneity profile $\theta_0\in\mathcal{A}^\infty$ defined by $\theta_0^x:=\tilde\theta_0$ for all $x\in [0,1]$. Then,
$$
\|\boldsymbol{B}_\theta z_0\|_{\mathcal{A}^\infty}^2=\int_0^1 \big\| \argmax_{\tilde{a}\in \mathcal{A}^x} U(\tilde{a}, z_0^x,\theta_0^x)\big\|_\mathcal{A}^2dx<\infty.
$$
Now let $z,\theta\in\mathcal{A}^\infty$. Then it follows from (i) and the triangle inequality,
\be\begin{aligned}
\|\boldsymbol{B}_\theta z\|_{\mathcal{A}^\infty}&=\|\boldsymbol{B}_\theta z-\boldsymbol{B}_{\theta_0} z_0\|_{\mathcal{A}^\infty}+\|\boldsymbol{B}_{\theta_0} z_0\|_{\mathcal{A}^\infty}\\
&\leq \frac{\ell_U}{\alpha_U}\| z- z_0\|_{\mathcal{A}^\infty}+\frac{\ell_\theta}{\alpha_U}\| \theta- \theta_0\|_{\mathcal{A}^\infty}+\|\boldsymbol{B}_{\theta_0} z_0\|_{\mathcal{A}^\infty}\\
&\leq \frac{\ell_U}{\alpha_U}\big(\| z\|_{\mathcal{A}^\infty}+\| z_0\|_{\mathcal{A}^\infty}\big)+\frac{\ell_\theta}{\alpha_U}\big(\| \theta\|_{\mathcal{A}^\infty}+\| \theta_0\|_{\mathcal{A}^\infty}\big)+\|\boldsymbol{B}_{\theta_0} z_0\|_{\mathcal{A}^\infty}<\infty.
\end{aligned}\ee

(iii) Let $z\in\mathcal{A}^\infty$. By Assumption~\ref{assum:Ax}, it holds that $(\boldsymbol{B}_\theta z)(x)\in\mathcal{A}_M$ for all $x\in [0,1]$, and therefore that
$$
\|\boldsymbol{B}_\theta z\|_{\mathcal{A}^\infty}^2=\int_0^1\|(\boldsymbol{B}_\theta z)(x)\|_\mathcal{A}^2dx\leq M^2.
$$
This concludes the proof of the lemma. 
\end{proof}
Recall that $\|\cdot\|_{L^2}$ denotes the norm on $L^2([0,1],\R)$. For a bounded operator $\boldsymbol{S}$ on $L^2([0,1],\R)$, denote its operator norm by 
\be\label{eq:operator-norm}
\|\boldsymbol{S}\|_{\operatorname{op}}:= \sup\big\{\|\boldsymbol{S}f\|_{L^2} : f\in L^2([0,1],\R)\textrm{ with }\|f\|_{L^2}\leq 1\big\}.
\ee
\begin{proof}[Proof of Theorem~\ref{thm:NE}]
(i) Recall \eqref{eq:Ainfty}. It follows from \eqref{eq:graphon-operator} and \eqref{def-b} that an action profile $\bar{a}\in\mathcal{A}^\infty$ is a Nash equilibrium if and only if it satisfies,
$$
\bar{a}=\boldsymbol{B}_\theta \boldsymbol{W}\bar{a},
$$
that is, if the function $\bar{a}$ is a fixed point of the operator  $\boldsymbol{B}_\theta \boldsymbol{W}$ on $\mathcal{A}^\infty$. To show that such $\bar a$ exists, we will prove that $\boldsymbol{B}_\theta \boldsymbol{W}$ is a contraction. Namely, recalling \eqref{eq:operator-norm}, by Lemma~\ref{lemma:B_theta}(i) and the linearity of the graphon operator, we have for any $a_1,a_2\in \mathcal{A}^\infty$ that
\be\begin{aligned}
    \|\boldsymbol{B}_\theta \boldsymbol{W}a_1-\boldsymbol{B}_\theta \boldsymbol{W}a_2\|_{\mathcal{A}^\infty}&\leq \frac{\ell_U}{\alpha_U}\| \boldsymbol{W}a_1- \boldsymbol{W}a_2\|_{\mathcal{A}^\infty}\\
    &\leq \frac{\ell_U}{\alpha_U}\|\boldsymbol{W}\|_{\operatorname{op}}\| a_1- a_2\|_{\mathcal{A}^\infty}\\
    &=\frac{\ell_U}{\alpha_U}\lambda_{1}(\boldsymbol{W})\| a_1- a_2\|_{\mathcal{A}^\infty},
\end{aligned}\ee
where we used that $\|\boldsymbol{W}\|_{\operatorname{op}}=\lambda_{1}(\boldsymbol{W})$ for the last equality (see \cite{avella2018centrality}, Lemma~1).
Therefore, by Banach's fixed point theorem (\cite{bauschke2017}, Chapter~1.12, Theorem~1.50), the operator  $\boldsymbol{B}_\theta \boldsymbol{W}$ has a unique fixed point $\bar a\in\mathcal{A}^\infty$ (see \eqref{eq:Ainfty}), which by definition of $\boldsymbol{B}_\theta$ is also contained in $\mathcal{A}_{ad}^\infty$ (see \eqref{eq:Aadinfty}).  Thus, it is a Nash equilibrium. 

(ii) Since the largest eigenvalue of any graphon is bounded by 1 (see \cite{lovasz2012large}, Chapter~7.5, equation 7.20), the result follows directly from (i). 
\end{proof}

\begin{proof}[Proof of Corollary~\ref{cor:NE-network}]
By Proposition~\ref{prop:correspondence}, in order to study Nash equilibrium properties of the network game $\mathcal{G}(\mathcal A_{ad}^N,U,\theta^N,G^N)$, we can equivalently study the graphon game $\mathcal{G}(\mathcal{A}_{ad}^\infty,U,\theta^N_{\operatorname{step}},W_{G^N})$ with action sets $\mathcal{A}^x:=\mathcal{A}^{i,N}$ for all $x\in\mathcal{P}_i^N$, step function heterogeneity profile $\smash{\theta^{N}_{\operatorname{step}}}$ corresponding to $\theta^N$, and underlying step graphon $W_{G^N}$. Moreover, denote by $\lambda_1\geq\ldots\geq\lambda_N$ the eigenvalues of $G^N$ and note that the eigenvalues of the corresponding step graphon $W_{G^N}$ are then given by $\{\frac{1}{N}\lambda_i\}_{i=1}^N$ (see \cite{gao2019spectral}, Proposition~3). The result now follows directly from Theorem~\ref{thm:NE}.
\end{proof}
The following proposition is needed to prove Theorems~\ref{thm:convergence} and \ref{thm:convergence-sampled}.  
\begin{proposition}\label{prop:continuity}
Consider graphons $W,W'\in\mathcal{W}_0$ and heterogeneity profiles $\theta,\theta'\in\mathcal{A^\infty}$. Suppose that the associated graphon games $\mathcal{G}(\mathcal{A}_{ad}^\infty,U,\theta,W)$ and $\mathcal{G}(\mathcal{A}_{ad}^\infty,U,\theta',W')$ satisfy Assumptions~\ref{assum:U}, \ref{assum:graphon}, and \ref{assum:Ax} with $\lambda_{1}(\boldsymbol{W})\vee \lambda_{1}(\boldsymbol{W'}) <\alpha_U/\ell_U$ and denote by $\bar a$ and $\bar a'$ their unique Nash equilibria (which exist by Theorem~\ref{thm:NE}), respectively. Then the following holds, 
$$
\|\bar a-\bar a'\|_{\mathcal{A}^\infty}\leq \frac{1}{\alpha_U-\ell_U\lambda_{1}(\boldsymbol{W})}\big(\ell_UM\|\boldsymbol{W}-\boldsymbol{W'}\|_{\operatorname{op}}+\ell_\theta\|\theta-\theta'\|_{\mathcal{A}^\infty}\big).
$$
\end{proposition}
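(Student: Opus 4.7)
The plan is to exploit the fixed point characterization of Nash equilibria established in the proof of Theorem~\ref{thm:NE}: by Theorem~\ref{thm:NE}(i), $\bar a = \boldsymbol{B}_\theta \boldsymbol{W}\bar a$ and $\bar a' = \boldsymbol{B}_{\theta'} \boldsymbol{W'}\bar a'$. I would then estimate $\|\bar a - \bar a'\|_{\mathcal{A}^\infty}$ by feeding these two identities into the joint Lipschitz bound of Lemma~\ref{lemma:B_theta}(i):
\begin{equation}
\|\bar a - \bar a'\|_{\mathcal{A}^\infty}
= \|\boldsymbol{B}_\theta \boldsymbol{W}\bar a - \boldsymbol{B}_{\theta'}\boldsymbol{W'}\bar a'\|_{\mathcal{A}^\infty}
\leq \frac{\ell_U}{\alpha_U}\|\boldsymbol{W}\bar a - \boldsymbol{W'}\bar a'\|_{\mathcal{A}^\infty} + \frac{\ell_\theta}{\alpha_U}\|\theta - \theta'\|_{\mathcal{A}^\infty}.
\end{equation}

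The next step is to dominate the term $\|\boldsymbol{W}\bar a - \boldsymbol{W'}\bar a'\|_{\mathcal{A}^\infty}$ by the triangle inequality, splitting via the intermediate term $\boldsymbol{W}\bar a'$ so that one piece measures sensitivity in the equilibrium and the other measures sensitivity in the graphon operator. Recalling that the graphon operator $\boldsymbol{W}$ acts on the $x$-variable only while $\|\cdot\|_{\mathcal{A}^\infty}$ integrates in $(x,t,\omega)$, a Fubini argument together with $\|\boldsymbol{W}\|_{\operatorname{op}} = \lambda_1(\boldsymbol{W})$ (see \cite{avella2018centrality}, Lemma~1) yields the pointwise-in-$(t,\omega)$ bounds
\begin{equation}
\|\boldsymbol{W}\bar a - \boldsymbol{W}\bar a'\|_{\mathcal{A}^\infty} \leq \lambda_1(\boldsymbol{W})\|\bar a-\bar a'\|_{\mathcal{A}^\infty},
\quad
\|\boldsymbol{W}\bar a' - \boldsymbol{W'}\bar a'\|_{\mathcal{A}^\infty} \leq \|\boldsymbol{W}-\boldsymbol{W'}\|_{\operatorname{op}}\|\bar a'\|_{\mathcal{A}^\infty}.
\end{equation}
For the second bound I would use Lemma~\ref{lemma:B_theta}(iii), which under Assumption~\ref{assum:Ax} gives $\|\bar a'\|_{\mathcal{A}^\infty} = \|\boldsymbol{B}_{\theta'}\boldsymbol{W'}\bar a'\|_{\mathcal{A}^\infty}\leq M$.

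Combining the two displays produces the self-referential estimate
\begin{equation}
\|\bar a - \bar a'\|_{\mathcal{A}^\infty} \leq \frac{\ell_U\lambda_1(\boldsymbol{W})}{\alpha_U}\|\bar a-\bar a'\|_{\mathcal{A}^\infty} + \frac{\ell_U M}{\alpha_U}\|\boldsymbol{W}-\boldsymbol{W'}\|_{\operatorname{op}} + \frac{\ell_\theta}{\alpha_U}\|\theta - \theta'\|_{\mathcal{A}^\infty}.
\end{equation}
Since $\ell_U\lambda_1(\boldsymbol{W}) < \alpha_U$ by hypothesis, the coefficient $\alpha_U - \ell_U\lambda_1(\boldsymbol{W})$ is strictly positive, and dividing through gives precisely the stated inequality. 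The only delicate point is the Fubini-style identification of $\|\boldsymbol{W}\cdot\|_{\operatorname{op}}$ as controlling the $\mathcal{A}^\infty$-norm of the lifted aggregate; everything else is a direct application of the Lipschitz bound in Lemma~\ref{lemma:B_theta}(i) combined with the uniform bound in Lemma~\ref{lemma:B_theta}(iii). No compactness or additional regularity is required, and one obtains the quantitative inequality as a clean consequence of the contraction estimate developed for Theorem~\ref{thm:NE}.
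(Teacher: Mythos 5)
Your proof matches the paper's argument essentially step for step: the fixed-point identities $\bar a = \boldsymbol{B}_\theta\boldsymbol{W}\bar a$, $\bar a' = \boldsymbol{B}_{\theta'}\boldsymbol{W'}\bar a'$, the joint Lipschitz bound of Lemma~\ref{lemma:B_theta}(i), the triangle-inequality split through $\boldsymbol{W}\bar a'$, the operator-norm and $\lambda_1(\boldsymbol{W})$ identification, the uniform bound $\|\bar a'\|_{\mathcal{A}^\infty}\leq M$ from Lemma~\ref{lemma:B_theta}(iii), and the final rearrangement of the self-referential estimate. The only difference is cosmetic: you spell out the Fubini-type justification for the pointwise-in-$(t,\omega)$ operator-norm bound, which the paper leaves implicit.
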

\begin{proof}
By the proof of Theorem~\ref{thm:NE}, it holds that $\bar{a}=\boldsymbol{B}_\theta \boldsymbol{W}\bar{a}$ and $\bar{a}'=\boldsymbol{B}_{\theta'} \boldsymbol{W'}\bar{a}'$. Therefore, by  Lemma~\ref{lemma:B_theta}(i) and the triangle inequality,
\be\begin{aligned}\label{eq:continuity}
\|\bar a-\bar a'\|_{\mathcal{A}^\infty}&= \|\boldsymbol{B}_\theta \boldsymbol{W}\bar{a}-\boldsymbol{B}_{\theta'} \boldsymbol{W'}\bar{a}'\|_{\mathcal{A}^\infty}\\
&\leq \frac{\ell_U}{\alpha_U}\|\boldsymbol{W}\bar{a}-\boldsymbol{W'}\bar{a}'\|_{\mathcal{A}^\infty}+\frac{\ell_\theta}{\alpha_U}\|\theta-\theta'\|_{\mathcal{A}^\infty}\\
&\leq \frac{\ell_U}{\alpha_U}\|\boldsymbol{W}\bar{a}-\boldsymbol{W}\bar{a}'\|_{\mathcal{A}^\infty}+\frac{\ell_U}{\alpha_U}\|\boldsymbol{W}\bar{a}'-\boldsymbol{W'}\bar{a}'\|_{\mathcal{A}^\infty}+\frac{\ell_\theta}{\alpha_U}\|\theta-\theta'\|_{\mathcal{A}^\infty}\\
&\leq \frac{\ell_U}{\alpha_U}\|\boldsymbol{W}\|_{\operatorname{op}}\|\bar{a}-\bar{a}'\|_{\mathcal{A}^\infty}+\frac{\ell_U}{\alpha_U}\|\boldsymbol{W}-\boldsymbol{W'}\|_{\operatorname{op}}\|\bar{a}'\|_{\mathcal{A}^\infty}+\frac{\ell_\theta}{\alpha_U}\|\theta-\theta'\|_{\mathcal{A}^\infty},
\end{aligned}\ee
where the last inequality follows from the definition \eqref{eq:operator-norm} of the operator norm. Recalling that $\|\boldsymbol{W}\|_{\operatorname{op}}=\lambda_{1}(\boldsymbol{W})$ (see \cite{avella2018centrality}, Lemma~1) and rearranging \eqref{eq:continuity} yields 
$$
\big(1-\frac{\ell_U}{\alpha_U}\lambda_{1}(\boldsymbol{W})\big)\|\bar{a}-\bar{a}'\|_{\mathcal{A}^\infty}\leq \Big(\frac{\ell_U}{\alpha_U}\|\boldsymbol{W}-\boldsymbol{W'}\|_{\operatorname{op}}\|\bar{a}'\|_{\mathcal{A}^\infty}+\frac{\ell_\theta}{\alpha_U}\|\theta-\theta'\|_{\mathcal{A}^\infty}\Big).
$$
Since $\frac{\ell_U}{\alpha_U}\lambda_{1}(\boldsymbol{W})<1$ by assumption and $\|\bar{a}'\|_{\mathcal{A}^\infty}\leq M$ by Lemma~\ref{lemma:B_theta}(iii), this concludes the proof. 
\end{proof}

Recall that the cut norm and the operator norm were defined in \eqref{eq:cut-norm} and  \eqref{eq:operator-norm}.
The following lemma shows how the two norms compare for graphons. It combines the results of Lemmas~E.2 and E.6 in \cite{janson2010graphons}. 
\begin{lemma}\label{lemma:relation-cut-op}
Let $W\in\mathcal{W}_0$, then it holds that
\be
\|W\|_\Box\leq \|\boldsymbol{W}\|_{\operatorname{op}}\leq  \sqrt{8\|W\|_\Box}.
\ee
\end{lemma}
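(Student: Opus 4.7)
The plan is to establish the two inequalities separately, the first by elementary test functions and the second by invoking the standard graphon-theoretic interpolation argument.

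For the lower bound $\|W\|_\Box \leq \|\boldsymbol{W}\|_{\operatorname{op}}$, I would simply test against indicators. For any measurable $S_1,S_2\subset [0,1]$, set $f=\mathbf{1}_{S_1}$ and $g=\mathbf{1}_{S_2}$ in $L^2([0,1],\R)$. Then the cut integral coincides with the inner product,
\[
\int_{S_1}\int_{S_2}W(x,y)\,dy\,dx=\langle f,\boldsymbol{W}g\rangle_{L^2},
\]
so Cauchy--Schwarz together with the definition \eqref{eq:operator-norm} of $\|\cdot\|_{\operatorname{op}}$ gives $|\langle f,\boldsymbol{W}g\rangle_{L^2}|\le \|\boldsymbol{W}\|_{\operatorname{op}}\|f\|_{L^2}\|g\|_{L^2}\le \|\boldsymbol{W}\|_{\operatorname{op}}$, using $\|f\|_{L^2}^2=|S_1|\le 1$ and $\|g\|_{L^2}^2=|S_2|\le 1$. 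Taking the supremum over $S_1,S_2$ yields the lower bound.

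For the upper bound $\|\boldsymbol{W}\|_{\operatorname{op}}\le \sqrt{8\|W\|_\Box}$, I would use that $\boldsymbol{W}$ is self-adjoint on $L^2([0,1],\R)$, so $\|\boldsymbol{W}\|_{\operatorname{op}}=\sup_{\|f\|_{L^2}\le 1}|\langle f,\boldsymbol{W}f\rangle|$. Thus it suffices to control $|\int\int W(x,y)f(x)f(y)\,dx\,dy|$ for an arbitrary normalized $f$. Decompose $f=f^+-f^-$ and apply the layer-cake formula $f^\pm(x)=\int_0^\infty \mathbf{1}_{\{f^\pm>t\}}(x)\,dt$, so that $\langle f,\boldsymbol{W}f\rangle$ becomes a signed superposition (over $\pm$ and over the parameters $t,s>0$) of cut integrals of $W$ over pairs of super-level sets. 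Each such cut integral is bounded by $\|W\|_\Box$; meanwhile, Chebyshev's inequality lets one trade the lengths of the $(t,s)$-integration ranges against the $L^2$-mass of $f^\pm$, which is bounded by $\|f\|_{L^2}^2\le 1$. The factor of $4$ from the sign/part splitting, combined with the square-root from the Chebyshev trade-off, produces the stated constant $\sqrt{8}$.

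The main obstacle is the careful bookkeeping in the upper bound, where one must balance the contributions from large and small super-level sets to get the optimal constant, and keep track of the signs arising from the $f^+$ versus $f^-$ decomposition. Since the two halves of this statement are established in the graphon literature, in practice it is most economical to simply invoke Lemma~E.2 of \cite{janson2010graphons} (lower bound) and Lemma~E.6 of \cite{janson2010graphons} (upper bound), as already indicated in the comment preceding the lemma statement.
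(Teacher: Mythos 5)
Your proposal ends by citing Lemmas E.2 and E.6 of \cite{janson2010graphons}, which is exactly what the paper does — it gives no independent proof of this lemma, only that citation. Your lower-bound sketch via indicators and Cauchy--Schwarz is correct and is indeed the standard argument. The upper-bound sketch is in the right spirit ($L^\infty\to L^1$ control of the bilinear form via sign-splitting and layer-cake, traded against $L^2$-mass via Chebyshev), but the constant bookkeeping as stated (``4 from sign splitting, square-root from Chebyshev, hence $\sqrt8$'') is too loose to obviously land on $\sqrt{8}$ rather than a worse constant; since you defer to Janson's Lemma E.6 anyway, this is not a gap, but if you wanted the sketch to stand on its own you would need to exhibit the precise cutoff $t$ and the bound on each of the resulting pieces.
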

  
To prove Theorem~\ref{thm:convergence}, we also need an “unlabeled” version of the cut norm (see \cite{lovasz2012large}, Chapter~8.2.2). Let $S_{[0,1]}$ be the set of all invertible measure preserving maps $[0,1] \to [0,1]$. We define the cut distance of two kernels $W,W'\in\mathcal{W}$ by
\begin{equation}
    \delta_\square(W,W') = \inf_{\varphi \in S_{[0,1]}} \|W^\varphi-W'\|_\Box,
\end{equation}
where $W^\varphi(x,y) = W(\varphi(x), \varphi(y))$. Note that $\delta_\square$ is only a pseudometric, as different kernels can
have distance zero.
\begin{proof}[Proof of Theorem~\ref{thm:convergence}]
We first prove the existence of the network game equilibria. For this, notice that $\|W-W_{G^N}\|_\Box\to 0$ implies $\delta_\square(W,W_{G^N})\to 0$. Therefore, it holds that
$$\frac{\lambda_1(G^N)}{ N}=\lambda_1(\boldsymbol{W_{G^N}})\to\lambda_1(\boldsymbol{W}),$$
where the equality follows from Proposition~3 in \cite{gao2019spectral} and the convergence follows from Theorem~11.54 in Chapter~11.6 of \cite{lovasz2012large}. In particular, since $\ell_U\lambda_{1}(\boldsymbol{W})<\alpha_U$ by assumption, there exists an $N_0\in\N$ such that 
$$
\ell_U\lambda_{1}(G^N)<\alpha_U  N,\quad \textrm{for all }N\geq N_0.
$$
Thus, by Corollary~\ref{cor:NE-network}, the network game $\mathcal{G}((\mathcal{A}^0)^N,U,\theta^N,G^N)$  admits a unique Nash equilibrium $\bar a^N$ for all $N\geq N_0$. Second, due to Proposition~\ref{prop:correspondence}, the equilibrium $\bar a^N$ of $\mathcal{G}((\mathcal{A}^0)^N,U,\theta^N,G^N)$ can be identified with the equilibrium $\bar a^N_{\operatorname{step}}$ of the corresponding step graphon game $\smash{\mathcal{G}((\mathcal{A}^0)^{[0,1]},U,\theta^N_{\operatorname{step}},W_{G^N})}$. Now, by Proposition~\ref{prop:continuity} and Lemma~\ref{lemma:relation-cut-op}, it holds for all $N\geq N_0$ that
\be\begin{aligned}
    \|\bar a-\bar a^N_{\operatorname{step}}\|_{\mathcal{A}^\infty}&\leq \frac{1}{\alpha_U-\ell_U\lambda_{1}(\boldsymbol{W})}\big(\ell_UM\|\boldsymbol{W}-\boldsymbol{W_{G^N}}\|_{\operatorname{op}}+\ell_\theta\|\theta-\theta^N_{\operatorname{step}}\|_{\mathcal{A}^\infty}\big)\\
    &\leq \frac{1}{\alpha_U-\ell_U\lambda_{1}(\boldsymbol{W})}\big(\ell_UM\sqrt{8\|W-W_{G^N}\|_\Box}+\ell_\theta\|\theta-\theta^N_{\operatorname{step}}\|_{\mathcal{A}^\infty}\big) \\
    &= C_W\|W-W_{G^N}\|_{\Box}^{1/2}+C_\theta\|\theta-\theta^N_{\operatorname{step}}\|_{\mathcal{A}^\infty},
\end{aligned}\ee
where the constants $C_W$ and $C_\theta$ are defined in Theorem~\ref{thm:convergence}. This concludes the proof. 
\end{proof}

Recall that the sampled graphs ${G_w^N(W)}$, ${G_s^N(W)}$ and the density parameters $(\kappa_N)_N$ were introduced in Definition~\ref{def:sampling}, and that $\smash{W_{G_w^N(W)},W_{G_s^N(W)}}$ and $\smash{\boldsymbol{W_{G_w^N(W)}},\boldsymbol{W_{G_s^N(W)}}}$ denote the corresponding step graphons (see \eqref{eq:step-graphon}) and induced integral operators (see \eqref{eq:graphon-operator}), respectively. Also, recall that $\Q$ denotes the probability measure according to which the sampling takes place (see Remark \ref{rem:sampling}).  The next lemma follows from Theorem~1 in \cite{avella2018centrality} and is needed for the proof of Theorem~\ref{thm:convergence-sampled}.
\begin{lemma}\label{lemma:avella}
For a graphon $W\in\mathcal{W}_0$ satisfying Assumption~\ref{assum:Lipschitz-graphon}, it holds with $\Q$-probability $1-\dl$ that 
\be\label{eq:rho(N)}
\big\|\boldsymbol{W}-\boldsymbol{W_{G_w^N(W)}}\big\|_{\operatorname{op}}\leq 2\sqrt{(L^2-K^2)d_N^2+Kd_N}=:\rho (N),
\ee
where $\dl\in (Ne^{-N/5},e^{-1})$ and $d_N=\tfrac{1}{N}+(\tfrac{8\log (N/\dl)}{N+1})^{0.5}$. Moreover, for sufficiently large $N$, it holds  with $\Q$-probability at least $1-2\dl$ that 
\be\label{eq:rho'(N)}
\big\|\boldsymbol{W}-\kappa_N^{-1}\boldsymbol{W_{G_s^N(W)}}\big\|_{\operatorname{op}}\leq \sqrt{\frac{4\kappa_N^{-1}\log (2N/\dl)}{N}}+\rho (N)=:\rho'(N).
\ee
\end{lemma}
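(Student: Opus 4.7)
The plan is to apply Theorem~1 of \cite{avella2018centrella2018centrality} in two steps, using the triangle inequality to reduce the simple-graph case to the weighted-graph case plus a matrix concentration bound. The first bound \eqref{eq:rho(N)} is precisely the content of that cited theorem: under Assumption~\ref{assum:Lipschitz-graphon} (blockwise Lipschitz continuity with constant $L$ and $K{+}1$ blocks), sampling $N$ i.i.d.~uniform points $(x_1,\ldots,x_N)$ from $[0,1]$ and forming the weighted empirical step graphon $W_{G_w^N(W)}$ yields the displayed operator-norm deviation with $\Q$-probability at least $1-\dl$ on the range $\dl\in(Ne^{-N/5},e^{-1})$. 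The quantity $d_N=\tfrac{1}{N}+\bigl(\tfrac{8\log(N/\dl)}{N+1}\bigr)^{1/2}$ arises from a Dvoretzky--Kiefer--Wolfowitz-type bound on the $L^\infty$ distance between the empirical CDF of $(x_i)_{i=1}^N$ and the uniform CDF, which controls how well the step-graphon partition $\mathcal{P}^N$ approximates the (blockwise) Lipschitz structure of $W$. So the first statement follows by directly invoking this result.

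For the second bound, I would apply the triangle inequality
\begin{equation}
\bigl\|\boldsymbol{W}-\kappa_N^{-1}\boldsymbol{W_{G_s^N(W)}}\bigr\|_{\operatorname{op}}
\leq \bigl\|\boldsymbol{W}-\boldsymbol{W_{G_w^N(W)}}\bigr\|_{\operatorname{op}}
+\bigl\|\boldsymbol{W_{G_w^N(W)}}-\kappa_N^{-1}\boldsymbol{W_{G_s^N(W)}}\bigr\|_{\operatorname{op}}.
\end{equation}
The first summand is at most $\rho(N)$ on an event of $\Q$-probability at least $1-\dl$ by the first part. Conditionally on $(x_1,\ldots,x_N)$, the entries of $\kappa_N^{-1}G_s^N(W)_{ij}-W(x_i,x_j)$ for $i<j$ are independent, mean-zero, bounded by $\kappa_N^{-1}$ in absolute value, with variance of order $\kappa_N^{-1}W(x_i,x_j)$. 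The symmetric matrix Bernstein inequality (as used in the proof of Theorem~1 of \cite{avella2018centrality}) then gives a spectral-norm bound of order $\sqrt{\kappa_N^{-1}\log(N/\dl)/N}$, and rescaling from matrix spectral norm to the corresponding step-graphon operator norm introduces the same $1/N$ factor (since the step graphon associated with an $N\times N$ matrix has operator norm $1/N$ times its spectral norm, see \cite{gao2019spectral}). Choosing the precise constants delivers the bound $\sqrt{4\kappa_N^{-1}\log(2N/\dl)/N}$ with $\Q$-probability at least $1-\dl$. A union bound over the two events completes the proof with probability at least $1-2\dl$.

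The only subtle point is matching the exact constants in the displayed $\rho(N)$ and $\rho'(N)$ to those derivable from the cited theorem, in particular keeping track of the factor $2$ in $\rho(N)$ and the constant $4$ in the logarithmic term of $\rho'(N)$; since the lemma is stated as a direct consequence of \cite[Theorem~1]{avella2018centrality}, the cleanest presentation is to verify that the hypotheses of that theorem are met (symmetry of $W$, blockwise Lipschitz continuity, independent uniform sampling of the latent points, and -- in the simple-graph case -- conditional Bernoulli edges with probability $\kappa_NW(x_i,x_j)$) and invoke it verbatim in each of the two parts, taking a union bound to combine them.
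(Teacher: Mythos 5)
Your proposal is correct and takes essentially the same route as the paper, which cites Theorem~1 of Avella-Medina, Parise, Schaub, and Segarra (2018) as the source of both bounds without further elaboration; your triangle-inequality decomposition of the simple-graph case into the weighted-graph bound plus a conditional matrix-Bernstein concentration term, followed by a union bound, is precisely the internal structure of that cited theorem's proof. (One typographical note: the first citation command is garbled and should read \texttt{\textbackslash cite\{avella2018centrality\}}.)
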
 
The following lemma is also needed for the proof of Theorem~\ref{thm:convergence-sampled}.
\begin{lemma}\label{lemma:aux-eigenvalues}
Suppose the assumptions of Theorem~\ref{thm:convergence-sampled} hold. Then, for any $0<\dl<e^{-1}$, there exists an $N_\dl\in\N$ such that for all $N\geq N_\dl$ it holds that $\ell_U\cdot\lambda_1(\kappa_N^{-1}G_s^N(W))<\alpha_U\cdot N$ with $\Q$-probability at least $1-2\dl$.
\end{lemma}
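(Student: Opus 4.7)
The plan is to transfer the condition $\ell_U<\alpha_U$ on the underlying graphon to the sampled sparse graph using the spectral approximation result Lemma~\ref{lemma:avella} together with the eigenvalue identity from \cite{gao2019spectral}. Specifically, Proposition~3 in \cite{gao2019spectral} (already used in the proof of Theorem~\ref{thm:convergence}) gives
\[
\lambda_1\big(\kappa_N^{-1}G_s^N(W)\big)/N \;=\; \lambda_1\big(\kappa_N^{-1}\boldsymbol{W_{G_s^N(W)}}\big),
\]
so it suffices to show that $\ell_U\cdot\lambda_1(\kappa_N^{-1}\boldsymbol{W_{G_s^N(W)}})<\alpha_U$ with $\Q$-probability at least $1-2\delta$ for all sufficiently large $N$.

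Both $\boldsymbol{W}$ and $\kappa_N^{-1}\boldsymbol{W_{G_s^N(W)}}$ are self-adjoint bounded operators on $L^2([0,1],\R)$, so Weyl's inequality together with $\lambda_1(B)\leq\|B\|_{\operatorname{op}}$ yields
\[
\lambda_1\big(\kappa_N^{-1}\boldsymbol{W_{G_s^N(W)}}\big)\;\leq\;\lambda_1(\boldsymbol{W})+\big\|\boldsymbol{W}-\kappa_N^{-1}\boldsymbol{W_{G_s^N(W)}}\big\|_{\operatorname{op}}.
\]
Under Assumption~\ref{assum:Lipschitz-graphon}, Lemma~\ref{lemma:avella} bounds the operator-norm perturbation by $\rho'(N)$ with $\Q$-probability at least $1-2\delta$, provided $N$ is large enough that $\delta>Ne^{-N/5}$ (which holds for all $N\geq N_\delta^{(1)}$ for some $N_\delta^{(1)}$, since the right-hand side goes to $0$).

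Next I will verify that $\rho'(N)\to 0$. The deterministic ingredient $d_N=\tfrac{1}{N}+\big(\tfrac{8\log(N/\delta)}{N+1}\big)^{1/2}$ tends to $0$, hence so does $\rho(N)=2\sqrt{(L^2-K^2)d_N^2+Kd_N}$. The stochastic ingredient $2\big(\log(2N/\delta)/(\kappa_NN)\big)^{1/2}$ tends to $0$ by the hypothesis $\log N/(\kappa_NN)\to 0$ of Theorem~\ref{thm:convergence-sampled}. Recalling from \cite{lovasz2012large}, Chapter~7.5, that $\lambda_1(\boldsymbol{W})\leq 1$, and using the assumption $\ell_U<\alpha_U$, we have $\alpha_U-\ell_U\lambda_1(\boldsymbol{W})>0$. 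Therefore, we can choose $N_\delta^{(2)}\in\N$ such that $\ell_U\rho'(N)<\alpha_U-\ell_U\lambda_1(\boldsymbol{W})$ for all $N\geq N_\delta^{(2)}$. Setting $N_\delta:=\max\{N_\delta^{(1)},N_\delta^{(2)}\}$ then gives, with $\Q$-probability at least $1-2\delta$,
\[
\ell_U\cdot\lambda_1\big(\kappa_N^{-1}\boldsymbol{W_{G_s^N(W)}}\big)\;\leq\;\ell_U\big(\lambda_1(\boldsymbol{W})+\rho'(N)\big)\;<\;\alpha_U,
\]
which is equivalent to the claimed inequality $\ell_U\cdot\lambda_1(\kappa_N^{-1}G_s^N(W))<\alpha_U\cdot N$.

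The argument is essentially a chain of known ingredients, so there is no real obstacle; the only point requiring mild care is bookkeeping the two $N$-thresholds — one to make the Lemma~\ref{lemma:avella} hypothesis $\delta>Ne^{-N/5}$ valid, and one to make the spectral perturbation $\ell_U\rho'(N)$ smaller than the spectral gap $\alpha_U-\ell_U\lambda_1(\boldsymbol{W})$ — and choosing $N_\delta$ as their maximum.
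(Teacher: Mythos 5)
Your proof is correct, but it follows a genuinely different route from the paper's. The paper stays entirely at the matrix level: it bounds $\lambda_1(\kappa_N^{-1}G_s^N(W))$ by the spectral norm, writes
$\|\kappa_N^{-1}G_s^N(W)\|_2 \leq \|\kappa_N^{-1}G_s^N(W)-G_w^N(W)\|_2 + \|G_w^N(W)\|_2$, uses the trivial Frobenius-norm bound $\|G_w^N(W)\|_2 \leq N$ (since $G_w^N(W)\in[0,1]^{N\times N}$), and then invokes a concentration estimate \emph{from the proof} of Theorem~1 in \cite{avella2018centrality}, namely $\frac{1}{N}\|\kappa_N^{-1}G_s^N(W)-G_w^N(W)\|_2 \leq \sqrt{\kappa_N^{-1}\log(2N/\delta)/N}$ with probability $\geq 1-2\delta$. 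In contrast, you lift everything to the operator level via Proposition~3 of \cite{gao2019spectral}, and then apply the \emph{stated} Lemma~\ref{lemma:avella} to control $\|\boldsymbol{W}-\kappa_N^{-1}\boldsymbol{W_{G_s^N(W)}}\|_{\operatorname{op}}$ by $\rho'(N)$.

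Both arguments are valid under the hypotheses of Theorem~\ref{thm:convergence-sampled}. What each buys: your route is more modular, reusing the already-stated Lemma~\ref{lemma:avella} rather than reaching into the internals of its proof, which makes the derivation easier to verify. The paper's route is slightly sharper: it needs only the simple-graph-versus-weighted-graph deviation to vanish, giving the condition $\frac{1}{N}\|\kappa_N^{-1}G_s^N(W)-G_w^N(W)\|_2 < \alpha_U/\ell_U - 1$, whereas your bound also folds in the weighted-graph-to-graphon term $\rho(N)$ (the full $\rho'(N)$), so it needs $\rho'(N) < \alpha_U/\ell_U - \lambda_1(\boldsymbol{W})$. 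This means your threshold $N_\delta$ can be somewhat larger, and your proof genuinely requires Assumption~\ref{assum:Lipschitz-graphon} (for the $\rho(N)$ part), while the paper's matrix-level argument does not need the Lipschitz hypothesis for this particular lemma. Since that assumption is in force in Theorem~\ref{thm:convergence-sampled} anyway, this is not a gap, only a loss of generality.

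One minor stylistic point: invoking Weyl's inequality is overkill. The chain $\lambda_1(\kappa_N^{-1}\boldsymbol{W_{G_s^N(W)}}) \leq \|\kappa_N^{-1}\boldsymbol{W_{G_s^N(W)}}\|_{\operatorname{op}} \leq \|\boldsymbol{W}\|_{\operatorname{op}} + \|\boldsymbol{W}-\kappa_N^{-1}\boldsymbol{W_{G_s^N(W)}}\|_{\operatorname{op}}$ together with $\|\boldsymbol{W}\|_{\operatorname{op}}=\lambda_1(\boldsymbol{W})$ (\cite{avella2018centrality}, Lemma~1) does the job via the plain triangle inequality.
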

\begin{proof}[Proof of Lemma~\ref{lemma:aux-eigenvalues}]
First, notice that $G^N_w(W)\in [0,1]^{N\times N}$ by Definition~\ref{def:sampling}.
Let $\|\cdot\|_2$ denote the spectral norm of a matrix. Then, due to the fact that the spectral radius of a matrix is bounded by its spectral norm, an application of the triangle inequality, and the fact that the spectral norm is bounded by the Frobenius norm, it holds that
\be\begin{aligned}\label{eq:aux-lemma1}
\lambda_1(\kappa_N^{-1}G^N_s(W))&\leq\|\kappa_N^{-1}G^N_s(W)\|_2\\
&\leq \|\kappa_N^{-1}G^N_s(W)-G^N_w(W)\|_2+\|G^N_w(W)\|_2\\
&\leq \|\kappa_N^{-1}G^N_s(W)-G^N_w(W)\|_2+N.
\end{aligned}\ee
Moreover, it follows from the proof of Theorem~1 in \cite{avella2018centrality} that for any $0<\dl<e^{-1}$ there is an $\smash{\wt{N}_\dl\in\N}$ such that for all $\smash{N\geq \wt{N}_\dl}$ it holds that 
\be\label{eq:aux-lemma2}
\frac{1}{N}\left\|\kappa_N^{-1}G^N_s(W)-G^N_w(W)\right\|_2\leq\sqrt{\frac{\kappa_N^{-1}\log(2N/\dl)}{N}},
\ee
with $\Q$-probability at least $1-2\dl$. Since $\ell_U<\alpha_U$ and the right-hand side of \eqref{eq:aux-lemma2} converges to $0$ as $N\to\infty$ by assumption, it follows from 
\eqref{eq:aux-lemma1}  that there is an $\smash{N_\dl\geq \wt{N}_\dl}$ such that 
$$
\frac{\lambda_1(\kappa_N^{-1}G_s^N(W))}{N}<\frac{\alpha_U}{\ell_U},\quad\textrm{for all }N\geq N_\dl,
$$
with $\Q$-probability at least $1-2\dl$. This concludes the proof.
\end{proof}

\begin{proof}[Proof of Theorem~\ref{thm:convergence-sampled}]
(i) We start with the case of weighted sampled graphs. First notice that all sampled adjacency matrices  $G_w^N(W)$ are contained in $[0,1]^{N\times N}$ by Definition~\ref{def:sampling}. Thus, since $\ell_U<\alpha_U$ by assumption, the sampled network game $\mathcal{G}((\mathcal{A}^0)^N,U,\theta^N,G_w^N(W))$ admits a unique Nash equilibrium $\bar a^N$ for every $N\in\N$, by Corollary~\ref{cor:NE-network}(ii). Second, due to Proposition~\ref{prop:correspondence}, the equilibrium $\bar a^N$ of $\mathcal{G}((\mathcal{A}^0)^N,U,\theta^N,G_w^N(W))$ can be identified with the equilibrium $\bar a^N_{\operatorname{step}}$ of the corresponding step graphon game $\mathcal{G}((\mathcal{A}^0)^{[0,1]},U,\theta^N_{\operatorname{step}},W_{G_w^N(W)})$. Recall \eqref{eq:step-function}. By Proposition~\ref{prop:continuity}, it holds that
\be
    \|\bar a-\bar a^N_{\operatorname{step}}\|_{\mathcal{A}^\infty}\leq \frac{1}{\alpha_U-\ell_U\lambda_{1}(\boldsymbol{W})}\big(\ell_UM\|\boldsymbol{W}-\boldsymbol{W_{G_w^N(W)}}\|_{\operatorname{op}}+\ell_\theta\|\theta-\theta^N_{\operatorname{step}}\|_{\mathcal{A}^\infty}\big).
\ee
Thus, by Lemma~\ref{lemma:avella}, for every $0<\dl<e^{-1}$, it holds for all $N\in\N$ satisfying $Ne^{-N/5}<\dl$ with $\Q$-probability at least $1-\dl$ that
\be\begin{aligned}\label{eq:convergence-weighted}
    \|\bar a-\bar a^N_{\operatorname{step}}\|_{\mathcal{A}^\infty}&\leq \frac{C_W}{\sqrt{8}}\rho(N)+C_\theta\|\theta-\theta^N_{\operatorname{step}}\|_{\mathcal{A}^\infty}\\
    &=\mathcal{O}\Big(\Big(\frac{\log(N/\dl)}{N}\Big)^\frac{1}{4}\vee\|\theta-\theta^N_{\operatorname{step}}\|_{\mathcal{A}^\infty}\Big),
\end{aligned}\ee
where the equality follows from \eqref{eq:rho(N)}. In particular, assume that there exists an $\eps>0$ such that
\be\label{eq:2delta}
\bar\dl:=\Q\left(\limsup_{N\to\infty}\Big\{\|\bar a-\bar a^N_{\operatorname{step}}\|_{\mathcal{A}^\infty}>\eps\Big\}\right)>0.
\ee
Choose $0<\dl<\min\{\bar\dl,e^{-1}\}$. By \eqref{eq:convergence-weighted}, since $\rho(N)$ in \eqref{eq:rho(N)} and $\|\theta-\theta^N_{\operatorname{step}}\|_{\mathcal{A}^\infty}$ converge to 0 as $N\to\infty$, there exists an $N(\eps)\in\N$ such that
\be
\Q\left(\Big\{\|\bar a-\bar a^N_{\operatorname{step}}\|_{\mathcal{A}^\infty}\leq\eps\Big\}\right)\geq 1-\dl,\quad \textrm{for all }N\geq N(\eps),
\ee
which contradicts \eqref{eq:2delta}, and thus yields
$$\|\bar a-\bar a^N_{\operatorname{step}}\|_{\mathcal{A}^\infty}\xrightarrow{N\to\infty}0,\quad \Q\textrm{-almost surely.}
$$
(ii) Next, we focus on the case of simple sampled graphs. Here, notice that the sampled matrices $\kappa_N^{-1}G_s^N(W)$ from Definition~\ref{def:sampling} are not necessarily contained in $[0,1]^{N\times N}$ for density parameters smaller than 1, so Lemma~\ref{lemma:aux-eigenvalues} is needed.
Namely, by Corollary~\ref{cor:NE-network} and Lemma~\ref{lemma:aux-eigenvalues}, for any $0<\dl<e^{-1}$, there exists an $N_\dl\in\N$ such that the sampled network game $\mathcal{G}((\mathcal{A}^0)^N,U,\theta^N,\kappa_N^{-1}G_s^N(W))$ admits a unique Nash equilibrium $\bar b^N$ with $\Q$-probability at least $1-2\dl$ for all $N\geq N_\dl$. Second, due to Proposition~\ref{prop:correspondence} and Remark \ref{rem:correspondence-with-kappa_N}, the equilibrium $\bar b^N$ of $\mathcal{G}((\mathcal{A}^0)^N,U,\theta^N,\kappa_N^{-1}G_s^N(W))$ can be identified with the equilibrium $\bar b^N_{\operatorname{step}}$ of the corresponding step graphon game $\mathcal{G}((\mathcal{A}^0)^{[0,1]},U,\theta^N_{\operatorname{step}},W_{\kappa_N^{-1}G_s^N(W)})$. Now, by Proposition~\ref{prop:continuity}, it holds that
\be
    \|\bar b-\bar b^N_{\operatorname{step}}\|_{\mathcal{A}^\infty}\leq \frac{1}{\alpha_U-\ell_U\lambda_{1}(\boldsymbol{W})}\big(\ell_UM\|\boldsymbol{W}-\boldsymbol{W_{\kappa_N^{-1}G_s^N(W)}}\|_{\operatorname{op}}+\ell_\theta\|\theta-\theta^N_{\operatorname{step}}\|_{\mathcal{A}^\infty}\big).
\ee
Thus, by Lemma~\ref{lemma:avella}, for every $0<\dl<e^{-1}$, it holds for all $N\in\N$ satisfying $Ne^{-N/5}<\dl$ with $\Q$-probability at least $1-2\dl$ that
\be\begin{aligned}\label{eq:convergence-sampled}
    \|\bar b-\bar b^N_{\operatorname{step}}\|_{\mathcal{A}^\infty}&\leq \frac{C_W}{\sqrt{8}}\rho'(N)+C_\theta\|\theta-\theta^N_{\operatorname{step}}\|_{\mathcal{A}^\infty}\\
    &=\mathcal{O}\Big(\Big(\frac{\log(N/\dl)}{N}\Big)^\frac{1}{4}\vee\Big(\frac{\log(N/\dl)}{\kappa_NN}\Big)^\frac{1}{2}\vee\|\theta-\theta^N_{\operatorname{step}}\|_{\mathcal{A}^\infty}\Big),
\end{aligned}\ee
where the equality follows from \eqref{eq:rho'(N)}. Finally, we want to prove almost sure convergence. Assume that there exists an $\eps>0$ such that
\be\label{eq:delta-bar}
\bar\dl:=\Q\left(\limsup_{N\to\infty}\Big\{\|\bar b-\bar b^N_{\operatorname{step}}\|_{\mathcal{A}^\infty}>\eps\Big\}\right)>0.
\ee
Choose $0<\dl<\min\{\bar\dl/2,e^{-1}\}$. By \eqref{eq:convergence-sampled}, since $\rho'(N)$ in \eqref{eq:rho'(N)} and $\|\theta-\theta^N_{\operatorname{step}}\|_{\mathcal{A}^\infty}$ converge to 0 as $N\to\infty$ (because $\smash{\tfrac{\log N}{\kappa_NN}}$ does), there exists an $N(\eps)\in\N$ such that
\be
\Q\left(\Big\{\|\bar b-\bar b^N_{\operatorname{step}}\|_{\mathcal{A}^\infty}\leq\eps\Big\}\right)\geq 1-2\dl>1-\bar\dl,\quad \textrm{for all }N\geq N(\eps),
\ee
which contradicts \eqref{eq:2delta} and thus yields
$$\|\bar b-\bar b^N_{\operatorname{step}}\|_{\mathcal{A}^\infty}\xrightarrow{N\to\infty}0,\quad \Q\textrm{-almost surely.}
$$
This completes the proof.
\end{proof}

\section{Proofs of the Results in Section~\ref{sec:interventions}} \label{sec:interventions-proofs}
Recall the definitions in \eqref{eq:A} and \eqref{eq:Ainfty} of the Hilbert spaces $\mathcal{A}$ and $\mathcal{A}^\infty$ and their norms. 

\begin{proof}[Proof of Theorem~\ref{thm:intervention-existence}] By assumption of Theorem~\ref{thm:intervention-existence}, the players have homogeneous action sets, that is, their sets of admissible actions from \eqref{eq:Aadinfty} satisfy $\mathcal{A}^x=\mathcal{A}^0$ for all $x\in [0,1]$.
Define the image of $(\mathcal{A}^0)^{[0,1]}$ under $\boldsymbol{W}$ as
$$
\boldsymbol{W}\big((\mathcal{A}^0)^{[0,1]}\big):=\big\{\boldsymbol{W}a\mid  a\in (\mathcal{A}^0)^{[0,1]}\big\}\subset\mathcal{A}^\infty.
$$
Now, recalling \eqref{eq:product}, notice that under Assumption~\ref{assum:Ax} it holds that $\boldsymbol{W}((\mathcal{A}^0)^{[0,1]})\subset\mathcal{A}_M$. Together with Assumption~\ref{assum:U2} it follows that the functional
$$(a,z,\theta)\mapsto\int_0^1U\big(a^x, z^x,\theta^x)dx$$
is uniformly bounded from above on $(\mathcal{A}^0)^{[0,1]}\times \boldsymbol{W}((\mathcal{A}^0)^{[0,1]})\times\mathcal{A}^\infty$.  Let $C_B':=\sqrt{C_B}$, where $C_B>0$ is the budget from \eqref{eq:graphon-int-problem}. Recalling \eqref{ad-set-a-m}, given $\smash{z\in\boldsymbol{W}((\mathcal{A}^0)^{[0,1]})}$, define the intervention operator $\boldsymbol{T}_z:(\mathcal{A}^0)^{[0,1]}\to \smash{\mathcal{A}_{C_B'}^\infty}$ by
\be\label{eq:Tz}
\boldsymbol{T}_z (a):=\argmax_{\hat{\theta}\in \smash{\mathcal{A}_{C_B'}^\infty}} \int_0^1U\big(a^x, z^x,\theta^x+\hat{\theta}^x\big)dx,
\ee
which assigns to any fixed action profile $a\in(\mathcal{A}^0)^{[0,1]}$ the optimal intervention, subject to the budget constraint from \eqref{eq:graphon-int-problem}. As $U$ is $\beta_U$-strongly concave in $\tilde\theta$ by Assumption~\ref{assum:U2}, the map 
$$\hat{\theta}\mapsto\int_0^1U(a^x,z^x,\theta^x+\hat{\theta}^x)dx$$
is $\beta_U$-strongly concave as well. Thus, $\boldsymbol{T}_z$ is well-defined because  $\smash{\mathcal{A}_{C_B'}^\infty}$ is bounded, closed, and convex (see \cite{bauschke2017}, Chapters~11.3--11.4, Corollary~11.9 and Proposition~11.15). Moreover, it is Lipschitz continuous by the auxiliary Lemma~\ref{lemma:C_z} stated below this proof. Second, define the Nash equilibrium operator $\boldsymbol{N}:\smash{\mathcal{A}_{C_B'}^\infty}\to\mathcal{A}^\infty$ by
$$
\boldsymbol{N}(\hat\theta):=\textrm{Nash equilibrium of }\mathcal{G}((\mathcal{A}^0)^{[0,1]},U,\theta+\hat\theta,W),
$$
where the domain of $\boldsymbol{N}$ is chosen as $\smash{\mathcal{A}_{C_B'}^\infty}$ so that the budget constraint from \eqref{eq:graphon-int-problem} is satisfied.
This operator is well defined due to Theorem~\ref{thm:NE} and its image is contained in $(\mathcal{A}^0)^{[0,1]}$ by definition. Moreover, it is Lipschitz continuous by Proposition~\ref{prop:continuity}. Now consider the best-response product operator 
\be\label{eq:P}
\boldsymbol{P}:\smash{\mathcal{A}_{C_B'}^\infty}\times(\mathcal{A}^0)^{[0,1]}\to\smash{\mathcal{A}_{C_B'}^\infty}\times(\mathcal{A}^0)^{[0,1]},\quad \boldsymbol{P}(\hat{\theta},a):=\left(\boldsymbol{T}_{(\boldsymbol{W}a)}(a)\, , \boldsymbol{N}(\hat{\theta})\right),
\ee
where $\smash{\mathcal{A}_{C_B'}^\infty}\times(\mathcal{A}^0)^{[0,1]}\subset \mathcal{A}^\infty\times\mathcal{A}^\infty$ is equipped with the product topology and the norm on ${\mathcal{A}^\infty\times\mathcal{A}^\infty}$ is given by 
\be\label{eq:prod-norm}
\|(a_1,a_2)\|^2_{\mathcal{A}^\infty\times\mathcal{A}^\infty}:=\|a_1\|^2_{\mathcal{A}^\infty}+\|a_2\|^2_{\mathcal{A}^\infty},\quad a_1,a_2\in\mathcal{A}^\infty,
\ee
see \cite{conway2019course}, Chapter~1.6, Definition~6.1. Then, by \eqref{eq:prod-norm}, Proposition~\ref{prop:continuity}, Lemma~ \ref{lemma:C_z}, and the fact that $\|\boldsymbol{W}\|_{\operatorname{op}}=\lambda_{1}(\boldsymbol{W})$ (see \cite{avella2018centrality}, Lemma~1), it holds for any $\hat{\theta}_1,\hat{\theta}_2\in\smash{\mathcal{A}_{C_B'}^\infty}$ and $a_1,a_2\in(\mathcal{A}^0)^{[0,1]}$ that 
\be\begin{aligned}
&\big\|\boldsymbol{P}(\hat{\theta}_1,a_1)-\boldsymbol{P}(\hat{\theta}_2,a_2)\big\|_{\mathcal{A}^\infty\times\mathcal{A}^\infty}\\
&=\sqrt{\|\boldsymbol{T}_{(\boldsymbol{W}a_1)}(a_1)-\boldsymbol{T}_{(\boldsymbol{W}a_2)}(a_2)\|_{\mathcal{A}^\infty}^2+\|\boldsymbol{N}(\hat{\theta}_1)-\boldsymbol{N}(\hat{\theta}_2)\|_{\mathcal{A}^\infty}^2}\\
&\leq\sqrt{\Big(\frac{1}{\beta_U}\big(\ell_a\|a_1-a_2\|_{\mathcal{A}^\infty}+\ell_z\|\boldsymbol{W}a_1-\boldsymbol{W}a_2 \|_{\mathcal{A}^\infty}\big)\Big)^2+\Big(\frac{\ell_\theta}{\alpha_U-\ell_U\lambda_{1}(\boldsymbol{W})}\|\hat{\theta}_1-\hat{\theta}_2\|_{\mathcal{A}^\infty}\Big)^2}\\
&\leq\sqrt{\Big(\frac{\ell_a+\ell_z\lambda_1(\boldsymbol{W})}{\beta_U}\|a_1-a_2\|_{\mathcal{A}^\infty}\Big)^2+\Big(\frac{\ell_\theta}{\alpha_U-\ell_U\lambda_{1}(\boldsymbol{W})}\|\hat{\theta}_1-\hat{\theta}_2\|_{\mathcal{A}^\infty}\Big)^2}\\
&\leq \max \Big(\frac{\ell_a+\ell_z\lambda_1(\boldsymbol{W})}{\beta_U},\frac{\ell_\theta}{\alpha_U-\ell_U\lambda_{1}(\boldsymbol{W})}\Big)\cdot\sqrt{\|a_1-a_2\|_{\mathcal{A}^\infty}^2+\|\hat{\theta}_1-\hat{\theta}_2\|_{\mathcal{A}^\infty}^2}.
\end{aligned}\ee
That is, $\boldsymbol{P}$ is Lipschitz continuous. Moreover, $\boldsymbol{P}$ is a nonexpansive operator whenever 
$$
\max \Big(\frac{\ell_a+\ell_z\lambda_1(\boldsymbol{W})}{\beta_U},\frac{\ell_\theta}{\alpha_U-\ell_U\lambda_{1}(\boldsymbol{W})}\Big)\leq 1.
$$
Since $\smash{\mathcal{A}_{C_B'}^\infty}\times(\mathcal{A}^0)^{[0,1]}$ is a nonempty, bounded,
closed, convex subset of $\mathcal{A}^\infty\times\mathcal{A}^\infty$, it follows from Assumption~\ref{assum:U2} and Browder's fixed point theorem (see \cite{bauschke2017}, Chapter~4.4, Theorem~4.29) that $\boldsymbol{P}$ has at least one fixed point $(\bar\theta,\bar a)\in \smash{\mathcal{A}_{C_B'}^\infty}\times(\mathcal{A}^0)^{[0,1]}$, that is, by \eqref{eq:P},
$$
\bar\theta=\boldsymbol{T}_{(\boldsymbol{W}\bar a)}(\bar a),\quad \bar a= \boldsymbol{N}(\bar{\theta}).
$$
This implies that $\bar\theta$ is an optimal intervention. If the inequality in Assumption~\ref{assum:U2} is strict, the fixed point is unique by Banach's fixed point theorem (see \cite{bauschke2017}, Chapter~1.12, Theorem~1.50), so that the optimal intervention is unique as well.
\end{proof}
Next we state and prove the regularity lemma for the operator $\boldsymbol{T}_{z}$ from \eqref{eq:Tz}, which was used in the proof of Theorem~\ref{thm:intervention-existence}. 
\begin{lemma}\label{lemma:C_z}
Under the assumptions of Theorem~\ref{thm:intervention-existence}, the best-response map $\boldsymbol{T}_z$ introduced in \eqref{eq:Tz} is Lipschitz continuous. That is, for any $a_1,a_2\in(\mathcal{A}^0)^{[0,1]}$ and $z_1,z_2\in\boldsymbol{W}((\mathcal{A}^0)^{[0,1]})$, it holds that
$$
\|\boldsymbol{T}_{z_1}(a_1)-\boldsymbol{T}_{z_2}(a_2)\|_\mathcal{A^\infty}
\leq \frac{1}{\beta_U}\big(\ell_a\|a_1-a_2\|_{\mathcal{A}^\infty}+\ell_z\|z_1-z_2 \|_{\mathcal{A}^\infty}\big).
$$
\end{lemma}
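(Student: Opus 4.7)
The plan is to mirror the proof of Lemma~\ref{lemma:B_theta}, now with the roles of the optimization variable and the parameters exchanged: $\hat\theta$ is the variable being optimized, while the pair $(a,z)$ plays the role that $(z,\theta)$ did previously. I would set $\bar{\hat\theta}_i:=\boldsymbol{T}_{z_i}(a_i)$ for $i=1,2$. By Assumption~\ref{assum:U2}, the functional $\hat\theta\mapsto\int_0^1 U(a_i^x,z_i^x,\theta^x+\hat\theta^x)\,dx$ is continuously G\^ateaux differentiable and $\beta_U$-strongly concave on the Hilbert space $\mathcal{A}^\infty$, while the budget ball $\mathcal{A}_{C_B'}^\infty$ is nonempty, closed, convex, and bounded. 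Hence $\bar{\hat\theta}_1$ and $\bar{\hat\theta}_2$ exist, are unique, and are characterized by the variational inequalities
\[
\int_0^1\bigl\langle\nabla_{\tilde\theta}U(a_i^x,z_i^x,\theta^x+\bar{\hat\theta}_i^x),\,\tilde\eta^x-\bar{\hat\theta}_i^x\bigr\rangle_{\mathcal{A}}\,dx\leq 0,\qquad \tilde\eta\in\mathcal{A}_{C_B'}^\infty,\ i=1,2.
\]

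I would then exploit the fact that strong concavity of $U$ in $\tilde\theta$ (Assumption~\ref{assum:U2}) translates into $\beta_U$-strong monotonicity of the operator $-\nabla_{\tilde\theta}U(\tilde a,\tilde z,\theta+\cdot)$ on $\mathcal{A}^\infty$. Testing the first variational inequality with $\tilde\eta=\bar{\hat\theta}_2$ and the second with $\tilde\eta=\bar{\hat\theta}_1$, adding the two inequalities, and inserting $\pm\nabla_{\tilde\theta}U(a_1,z_1,\theta+\bar{\hat\theta}_2)$ inside the integrand produces a direct global analogue of the sensitivity argument in Lemma~\ref{lemma:VI}. After an application of Cauchy--Schwarz and division by $\|\bar{\hat\theta}_1-\bar{\hat\theta}_2\|_{\mathcal{A}^\infty}$, this yields
\[
\bigl\|\bar{\hat\theta}_1-\bar{\hat\theta}_2\bigr\|_{\mathcal{A}^\infty}\leq \frac{1}{\beta_U}\bigl\|\nabla_{\tilde\theta}U(a_1,z_1,\theta+\bar{\hat\theta}_2)-\nabla_{\tilde\theta}U(a_2,z_2,\theta+\bar{\hat\theta}_2)\bigr\|_{\mathcal{A}^\infty}.
\]

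To close the estimate I would apply the Lipschitz continuity of $\nabla_{\tilde\theta}U(\tilde a,\tilde z,\cdot)$ in $(\tilde a,\tilde z)$ with constants $\ell_a,\ell_z$ from Assumption~\ref{assum:U2} pointwise in $x\in[0,1]$, and then invoke Minkowski's inequality in $L^2([0,1],\mathbb{R})$ to split the resulting norm into the desired sum $\ell_a\|a_1-a_2\|_{\mathcal{A}^\infty}+\ell_z\|z_1-z_2\|_{\mathcal{A}^\infty}$. I do not anticipate a substantial obstacle: the whole argument is a global analogue of Lemma~\ref{lemma:B_theta}. The only point requiring some care is that the budget constraint couples the components of $\hat\theta$ across $x$, so the variational inequality cannot be split into pointwise (in $x$) problems as in Lemma~\ref{lemma:B_theta}; nevertheless, the strong monotonicity and the $(\ell_a,\ell_z)$-Lipschitz estimates transfer seamlessly from the pointwise setting to the global setting on $\mathcal{A}^\infty$.
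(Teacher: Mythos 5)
Your proposal is correct and follows essentially the same route as the paper: characterize the two optimizers by variational inequalities on $\mathcal{A}^\infty$, exploit $\beta_U$-strong monotonicity of $-\nabla_{\tilde\theta}$ to obtain the sensitivity estimate (the paper packages this as an application of Lemma~\ref{lemma:VI}, while you unfold it by hand — same content), and finish with the $(\ell_a,\ell_z)$-Lipschitz bound on the gradient plus the triangle inequality in $L^2([0,1],\R)$. The one step you take for granted that the paper spells out is the identity $\nabla_{\hat\theta}\int_0^1 U(a_i^x,z_i^x,\theta^x+\hat\theta^x)\,dx = \int_0^1\nabla_{\tilde\theta}U(a_i^x,z_i^x,\theta^x+\hat\theta^x)\,dx$ underlying your variational inequality; this interchange of the G\^ateaux limit with the $dx$-integral requires a dominated-convergence argument, for which the paper uses the local Lipschitz bound $\ell_{M'}=\ell_0(1+M')$ in $\tilde\theta$ from Assumption~\ref{assum:U2}. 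Adding that justification would make the proof complete.
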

\begin{proof}
Recall that the heterogeneity profile $\theta\in\mathcal{A}^\infty$ from \eqref{eq:graphon-int-problem} is fixed.
Let $a_1,a_2\in(\mathcal{A}^0)^{[0,1]}$ and $z_1,z_2\in\boldsymbol{W}((\mathcal{A}^0)^{[0,1]})$. Then, the map $\hat\theta\mapsto\int_0^1 U(a_1^x,z_1^x,\theta^x+\hat\theta^x)dx$ is $\beta_U$-strongly concave. Moreover, its G\^ateaux derivative is given by
\be\begin{aligned}\label{eq:gateaux}
&\langle\nabla_{\hat\theta}\big( \int_0^1U(a_1^x,z_1^x,\cdot)dx\big)({\theta}+\hat\theta),\xi\rangle_{\mathcal{A}^\infty}\\
&:=\lim_{\eps\to 0}\frac{\int_0^1U(a_1^x,z_1^x,\theta^x+\hat\theta^x+\eps\xi^x)dx-\int_0^1U(a_1^x,z_1^x,\theta^x+\hat\theta^x)dx}{\eps},\quad \xi\in\mathcal{A}^\infty.
\end{aligned}\ee
To apply the dominated convergence theorem to the right-hand side of \eqref{eq:gateaux}, recall that by Assumption~\ref{assum:U2}, there exists a constant $\ell_0$  such that for any constant $M'$, the utility functional $U(\tilde a,\tilde z,\tilde \theta)$ is Lipschitz continuous in $\tilde\theta$ on $\mathcal{A}_M\times \mathcal{A}_M\times\mathcal{A}_{M'}$ with Lipschitz constant $\ell_{M'}=\ell_0(1+M')$. Therefore, for $\xi\in\mathcal{A}^\infty$ and $\eps>0$, it follows from the Cauchy-Schwarz inequality that
\be\begin{aligned}\label{eq:DCT}
&\frac{1}{\eps}\left| \int_0^1U(a_1^x,z_1^x,\theta^x+\hat\theta^x+\eps\xi^x)dx-\int_0^1U(a_1^x,z_1^x,\theta^x+\hat\theta^x)dx \right|\\
&\leq \frac{1}{\eps}\int_0^1\left|U(a_1^x,z_1^x,\theta^x+\hat\theta^x+\eps\xi^x)-U(a_1^x,z_1^x,\theta^x+\hat\theta^x)\right|dx \\
&\leq \frac{1}{\eps}\int_0^1 \ell_0\big(1+\|\theta^x\|_{\mathcal{A}}+\|\hat\theta^x\|_{\mathcal{A}}+\|\eps\xi^x\|_{\mathcal{A}}\big)\|\eps\xi^x\|_{\mathcal{A}}dx\\
&\leq \ell_0\big\|1+\|\theta^x\|_{\mathcal{A}}+\|\hat\theta^x\|_{\mathcal{A}}+\|\eps\xi^x\|_{\mathcal{A}}\big\|_{L^2}\|\xi\|_{\mathcal{A}^\infty}\\
&\leq \ell_0\big(1+\|\theta\|_{\mathcal{A}^\infty}+\|\hat\theta\|_{\mathcal{A}^\infty}+\eps\|\xi\|_{\mathcal{A}^\infty}\big)\|\xi\|_{\mathcal{A}^\infty}\\
&<\infty,
\end{aligned}\ee
where we used the triangle inequality in the end. For $(\tilde a,\tilde z,\tilde \theta)\in\mathcal{A}_M\times\mathcal{A}_M\times\mathcal{A}$, denote by 
$$\langle\nabla_{\tilde\theta} U(\tilde a,\tilde z,\tilde \theta),\tilde\xi\rangle_{\mathcal{A}}:=\lim_{\eps\to 0}\frac{1}{\eps}\big(U(\tilde a,\tilde z,\tilde \theta+\eps\tilde\xi)-U(\tilde a,\tilde z,\tilde \theta)\big),\quad\tilde\xi\in\mathcal{A},$$ 
the G\^ateaux derivative on $\mathcal{A}$. Then, it follows from \eqref{eq:gateaux} and \eqref{eq:DCT} that
\be\begin{aligned}\label{eq:gateaux2}
&\langle\nabla_{\hat\theta}\big( \int_0^1U(a_1^x,z_1^x,\cdot)dx\big)({\theta}+\hat\theta),\xi\rangle_{\mathcal{A}^\infty}\\
&=\int_0^1\lim_{\eps\to 0}\frac{U(a_1^x,z_1^x,\theta^x+\hat\theta^x+\eps\xi^x)-U(a_1^x,z_1^x,\theta^x+\hat\theta^x)}{\eps}dx\\
&=\int_0^1 \langle\nabla_{\tilde\theta} U(a_1^x,z_1^x,\theta^x+\hat\theta^x),\xi^x\rangle_\mathcal{A}dx\\
&=\langle\nabla_{\tilde\theta} U(a_1^\cdot,z_1^\cdot,{\theta}^\cdot+\hat\theta^\cdot),\xi\rangle_\mathcal{A^\infty},\quad \xi\in\mathcal{A}^\infty.
\end{aligned}\ee
Therefore, by \eqref{eq:gateaux2}, the map $\hat\theta\mapsto\int_0^1 U(a_1^x,z_1^x,\theta^x+\hat\theta^x)dx$ is G\^ateaux differentiable, and together with its $\beta_U$-strong concavity, we get that $\smash{-\nabla_{\hat\theta}\big( \int_0^1U(a_1^x,z_1^x,\theta^x+\cdot)dx\big)}$ is $\beta_U$-strongly monotone on $\mathcal{A}^\infty$, that is, 
\be\begin{aligned}\label{eq:monotone2}
&\big \langle \hat\theta-\xi, -\nabla_{\hat\theta} (\int_0^1 U(a_1^x,z_1^x,\theta+\hat\theta)dx)-\big(-\nabla_{\hat\theta} (\int_0^1 U(a_1^x,z_1^x,\theta+\xi)dx)\big)\big\rangle_{\mathcal{A}^\infty}\\
&\geq \beta_U\|\hat\theta-\xi  \|_{\mathcal{A}^\infty}^2,\quad \textrm{for all }\hat\theta,\xi\in\mathcal{A}^\infty,
\end{aligned}\ee
(see \cite{bauschke2017}, Chapter~17, Exercise 17.5). Consider the corresponding the bilinear form $F$ on $\mathcal{A}^\infty$ given by 
\be\label{eq:F}
F(\hat\theta,\xi):= -\nabla_{\hat\theta} (\int_0^1 U(a_1^x,z_1^x,\theta^x+\hat\theta^x)dx), \xi\rangle_{\mathcal{A}^\infty},\quad \hat\theta,\xi\in\mathcal{A}^\infty.
\ee
Setting $\xi=0$ in \eqref{eq:monotone2} shows that this bilinear form is coercive with constant $\beta_U$, that is,
$$
F(\hat\theta,\hat\theta)=-\langle \nabla_{\hat\theta} \big(\int_0^1U(a_1^x,z_1^x,\cdot)dx\big)({\theta}+\hat\theta), {\hat{\theta}}\rangle_\mathcal{A^\infty}\geq \beta_U \|{\hat{\theta}}\|^2_\mathcal{A^\infty},\quad \textrm{for all }{\hat{\theta}}\in\mathcal{A}^\infty.
$$
Next, note that the unique best-responses $\boldsymbol{T}_{z_1}(a_1)$ and $\boldsymbol{T}_{z_2}(a_2)$ are given by the unique solutions to the variational inequalities 
\be\label{eq:VI1-2}
\langle  \nabla_{\hat\theta} \big(\int_0^1U(a_1^x,z_1^x,\cdot)dx\big)(\theta+\hat{\theta}), \xi-\hat{\theta}\rangle_\mathcal{A^\infty}\leq 0,\quad \textrm{ for all } \xi \in\mathcal{A}^\infty_{C_B'},
\ee
and
\be\label{eq:VI2-2}
\langle  \nabla_{\hat\theta} \big(\int_0^1U(a_2^x,z_2^x,\cdot)dx\big)(\theta+\hat{\theta}), \xi-\hat{\theta}\rangle_\mathcal{A^\infty}\leq 0,\quad \textrm{ for all } \xi \in\mathcal{A}^\infty_{C_B'},
\ee
respectively.
 Recalling \eqref{eq:F}, inequality \eqref{eq:VI1-2} can equivalently be written as 
\be\label{eq:VI1-2-rewritten}
F(\hat\theta,\xi-\hat\theta)\geq 0, \quad  \textrm{ for all } \xi \in\mathcal{A}^\infty_{C_B'},
\ee
and, adding $F(\hat\theta,\xi-\hat\theta)$ to both of its sides, inequality \eqref{eq:VI2-2} can equivalently be written as 
\be
F(\hat\theta,\xi-\hat\theta)\geq \langle  \nabla_{\hat\theta} \big(\int_0^1U(a_2^x,z_2^x,\cdot)dx\big)(\theta+\hat{\theta}), \xi-\hat{\theta}\rangle_\mathcal{A^\infty}+F(\hat\theta,\xi-\hat\theta),\quad  \textrm{ for all } \xi \in\mathcal{A}^\infty_{C_B'},
\ee
which by \eqref{eq:F} simplifies to 
\be\begin{aligned}\label{eq:VI2-2-rewritten}
&F(\hat\theta,\xi-\hat\theta)\geq \langle  \nabla_{\hat\theta} \big(\int_0^1U(a_2^x,z_2^x,\cdot)dx\big)(\theta+\hat{\theta})- \nabla_{\hat\theta} \big(\int_0^1U(a_1^x,z_1^x,\cdot)dx\big)(\theta+\hat{\theta}), \xi-\hat{\theta}\rangle_\mathcal{A^\infty},\\
&\textrm{for all } \xi \in\mathcal{A}^\infty_{C_B'},
\end{aligned}\ee
Now, recalling \eqref{eq:gateaux2}, an application of Lemma~\ref{lemma:VI} to the variational inequalities \eqref{eq:VI1-2-rewritten} and \eqref{eq:VI2-2-rewritten} (which are equivalent to \eqref{eq:VI1-2} and \eqref{eq:VI2-2}, respectively) with 
$$r_1=0,\quad r_2=\nabla_{\hat\theta} \big(\int_0^1U(a_2^x,z_2^x,\cdot)dx\big)(\theta+\hat{\theta})- \nabla_{\hat\theta} \big(\int_0^1U(a_1^x,z_1^x,\cdot)dx\big)(\theta+\hat{\theta}),$$
the fact that $\nabla_{\tilde\theta} U(\tilde a,\tilde z,\cdot)$ is Lipschitz continuous in $\tilde a,\tilde z$ with constants $\ell_a,\ell_z$ by assumption, and denoting by $\boldsymbol{T}_{z_2}(a_2)(x)$ the intervention $\boldsymbol{T}_{z_2}(a_2)$ at player $x\in[0,1]$ yield that
\be\begin{aligned}\label{eq:sensitivity2}
&\|\boldsymbol{T}_{z_1}(a_1)-\boldsymbol{T}_{z_2}(a_2)\|_\mathcal{A^\infty}\\
&\leq \frac{1}{\beta_U}\big\| \nabla_{\tilde\theta} U\big(a_1^\cdot,z_1^\cdot,\theta^\cdot+\boldsymbol{T}_{z_2}(a_2)(\cdot)\big)- \nabla_{\tilde\theta} U\big(a_2^\cdot,z_2^\cdot,\theta^\cdot+\boldsymbol{T}_{z_2}(a_2)(\cdot)\big) \big\|_\mathcal{A^\infty}\\
&= \frac{1}{\beta_U}\Big\|\big\| \nabla_{\tilde\theta} U\big(a_1^\cdot,z_1^\cdot,\theta^\cdot+\boldsymbol{T}_{z_2}(a_2)(\cdot)\big)- \nabla_{\tilde\theta} U\big(a_2^\cdot,z_2^\cdot,\theta^\cdot+\boldsymbol{T}_{z_2}(a_2)(\cdot)\big) \big\|_\mathcal{A}\Big\|_{L^2}\\
&\leq \frac{1}{\beta_U}\big\|\ell_a\|a_1^\cdot-a_2^\cdot\|_\mathcal{A}+\ell_z\|z_1^\cdot-z_2^\cdot \|_\mathcal{A} \big\|_{L^2}\\
&\leq \frac{1}{\beta_U}\big(\ell_a\|a_1-a_2\|_\mathcal{A^\infty}+\ell_z\|z_1-z_2 \|_\mathcal{A^\infty}\big),
\end{aligned}\ee
where the last inequality follows from the triangle inequality.  
\end{proof}

The following lemma is needed for the proof of Theorem~\ref{thm:intervention-convergence}.
\begin{lemma}\label{lemma:U-Lipschitz}
Assume that the utility functional $U(\tilde a,\tilde z,\tilde \theta)$ is jointly Lipschitz in $(\tilde a,\tilde z,\tilde\theta)$ with Lipschitz constant $L_U$. Then it holds for all $a_1,a_2,z_1,z_2,\theta_1,\theta_2\in\mathcal{A}^\infty$ that 
$$
\left|\int_0^1\hspace{-1mm}U(a^x_1,z^x_1,\theta^x_1)dx-\hspace{-1.9mm}\int_0^1 U(a^x_2,z^x_2,\theta^x_2)dx\right|\leq L_U\sqrt{\|a_1-a_2\|_{\mathcal{A}^\infty}^2\hspace{-1mm}+\|z_1-z_2\|_{\mathcal{A}^\infty}^2\hspace{-1mm}+\|\theta_1-\theta_2\|_{\mathcal{A}^\infty}^2}.
$$
\end{lemma}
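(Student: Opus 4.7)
The plan is to reduce the bound to a pointwise application of the joint Lipschitz property followed by a single Cauchy--Schwarz (or Jensen) step to pass from an $L^1$-norm in the spatial variable $x$ to an $L^2$-norm. This is the natural route because the left-hand side involves an integral of $U$ over $x$, while the right-hand side involves $\mathcal{A}^\infty$-norms, which are themselves $L^2$-norms over $x$ of the $\mathcal{A}$-norms (recall definition \eqref{eq:Ainfty}).

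First, I would bring the absolute value inside the integral via the triangle inequality, writing
\[
\left|\int_0^1 U(a_1^x,z_1^x,\theta_1^x)\,dx-\int_0^1 U(a_2^x,z_2^x,\theta_2^x)\,dx\right|\leq \int_0^1\bigl|U(a_1^x,z_1^x,\theta_1^x)-U(a_2^x,z_2^x,\theta_2^x)\bigr|\,dx.
\]
Then, since $U$ is assumed jointly Lipschitz with constant $L_U$ in the product norm on $\mathcal{A}\times\mathcal{A}\times\mathcal{A}$, the integrand is bounded pointwise by
\[
L_U\sqrt{\|a_1^x-a_2^x\|_{\mathcal{A}}^2+\|z_1^x-z_2^x\|_{\mathcal{A}}^2+\|\theta_1^x-\theta_2^x\|_{\mathcal{A}}^2}.
\]

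Next, I would apply the Cauchy--Schwarz inequality (equivalently, Jensen's inequality for the concave square root) on the probability space $([0,1],\nu_{[0,1]})$ to obtain
\[
\int_0^1\sqrt{\|a_1^x-a_2^x\|_{\mathcal{A}}^2+\|z_1^x-z_2^x\|_{\mathcal{A}}^2+\|\theta_1^x-\theta_2^x\|_{\mathcal{A}}^2}\,dx\leq \sqrt{\int_0^1\bigl(\|a_1^x-a_2^x\|_{\mathcal{A}}^2+\|z_1^x-z_2^x\|_{\mathcal{A}}^2+\|\theta_1^x-\theta_2^x\|_{\mathcal{A}}^2\bigr)\,dx}.
\]
Finally, recognizing that each of the three terms under the square root equals the square of the corresponding $\mathcal{A}^\infty$-norm (by \eqref{eq:Ainfty}), the right-hand side collapses to the desired bound.

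There is no real obstacle here: the argument is a clean pointwise-then-integrate estimate, and the joint Lipschitz hypothesis was stated exactly so that it matches the Euclidean combination of the three $\mathcal{A}^\infty$-norms appearing on the right-hand side. The only (minor) thing to verify is measurability of the integrand $x\mapsto U(a_i^x,z_i^x,\theta_i^x)$, which follows from the joint continuity of $U$ together with the measurability of the action profiles built into Definition~\ref{def:action-profile}.
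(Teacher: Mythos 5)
Your proof is correct and follows exactly the same route as the paper's: triangle inequality to move the absolute value inside the integral, pointwise application of the joint Lipschitz bound, then Jensen's inequality (concavity of the square root) to pass from the $L^1$ to the $L^2$ average over $x$, yielding the Euclidean combination of $\mathcal{A}^\infty$-norms. No gaps.
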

\begin{proof} By the linearity of the integral, the Lipschitz continuity of $U$, and the concavity of the square-root function, it holds that
\be\begin{aligned}
    \Big|&  \int_0^1  U(a^x_1,z^x_1,\theta^x_1)dx-\int_0^1 U(a^x_2,z^x_2,\theta^x_2)dx\Big|\\
    &\leq L_U\int_0^1\sqrt{\|a^x_1-a^x_2\|_{\mathcal{A}}^2+\|z^x_1-z^x_2\|_{\mathcal{A}}^2+\|\theta^x_1-\theta^x_2\|_{\mathcal{A}}^2}dx\\
    &\leq L_U\sqrt{\int_0^1\|a^x_1-a^x_2\|_{\mathcal{A}}^2+\|z^x_1-z^x_2\|_{\mathcal{A}}^2+\|\theta^x_1-\theta^x_2\|_{\mathcal{A}}^2dx}\\
    &= L_U\sqrt{\|a_1-a_2\|_{\mathcal{A}^\infty}^2+\|z_1-z_2\|_{\mathcal{A}^\infty}^2+\|\theta_1-\theta_2\|_{\mathcal{A}^\infty}^2}.
\end{aligned}\ee
\end{proof}

\begin{proof}[Proof of Theorem~\ref{thm:intervention-convergence}] Let $\theta^N \in \mathcal A^N $ and $\theta \in \mathcal A^\infty$ be heterogeneity processes for problems \eqref{eq:network-int-problem} and \eqref{eq:graphon-int-problem}. 
First, notice that there is an $N_0\in\N$ such that for all $N\geq N_0$ the network game $\mathcal{G}((\mathcal{A}^0)^N,U,\theta^N+\hat{\theta}^N,G^N)$ admits a unique Nash equilibrium for all $\hat\theta^N\in\mathcal{A}^N$ by Theorem~\ref{thm:convergence}. Throughout the proof, let $N\geq N_0$. We first embed the network intervention problem \eqref{eq:network-int-problem} into the graphon framework. For this, define
$$A^{N,\infty}:=\left\{a\in\mathcal{A}^\infty\big|a\textrm{ is a step function w.r.t. }\mathcal{P}^N\right\},
$$
where $\mathcal{P}^N=\{\mathcal{P}_i^N\}_{i=1}^N$ is the partition from Section~\ref{subsec:correspondence}. Then  problem \eqref{eq:network-int-problem} can be reformulated as
\be\begin{aligned}\label{eq:network-int-problem-reformulated}
    \bar\theta^N\in&\argmax_{\hat{\theta}^N\in\mathcal{A}^{N,\infty}}\ T^N(\hat{\theta}^N)= \argmax_{\hat{\theta}^N\in\mathcal{A}^{N,\infty}}\int_0^1 U\big(\bar a_{\hat{\theta}^N}^{x,N},\bar z_{\hat{\theta}^N}^{x,N},\theta_{\operatorname{step}}^{x,N}+\hat{\theta}^{x,N}\big)dx,\\
    \quad \textrm{s.t.}\quad & \bar a_{\hat{\theta}^N}^{N}\textrm{ is a Nash equilibrium of }\mathcal{G}((\mathcal A^0)^{[0,1]},U,{\theta}^N_{\operatorname{step}}+\hat{\theta}^N,W_{G^N}),\quad \bar z_{\hat{\theta}^N}^{N}=\boldsymbol{W_{G^N}}\bar a_{\hat{\theta}^N}^{N},\\
    & \|\hat{\theta}^N\|^2_{\mathcal{A}^\infty}\leq C_B,
\end{aligned}\ee
Note that it is not clear in general whether problem \eqref{eq:network-int-problem-reformulated} has an optimizer $\bar\theta^N$. However, recalling that $\smash{T^N_{\operatorname{opt}}}$ denotes the optimal value of problem \eqref{eq:network-int-problem-reformulated}, for a given $\eps>0$, we can find an $\eps$-optimizer $\smash{\bar\theta_\eps^N}$, that is, $T^N(\hat{\theta}_\eps^N)\geq T^N_{\operatorname{opt}}-\eps$. Recalling the definition \eqref{eq:approx-intervention} of the approximate intervention, it follows from the fact that $\bar\theta$ is a maximizer of the graphon intervention problem \eqref{eq:graphon-int-problem} that  
\be\begin{aligned}\label{eq:T_opt}
    T^N(\bar\theta^N_W)&\geq T(\bar\theta)-|T^N(\bar\theta^N_W)- T(\bar\theta)|\\
    &\geq T(\bar\theta^N_\eps)-|T^N(\bar\theta^N_W)- T(\bar\theta)|\\
    &\geq T^N(\bar\theta^N_\eps)-\underbrace{|T^N(\bar\theta^N_W)- T(\bar\theta)|}_{=:T_1}-\underbrace{|T(\bar\theta^N_\eps)-T^N(\bar\theta^N_\eps)|}_{=:T_2}\\
    &\geq T^N_{\operatorname{opt}}-\eps-T_1-T_2,
\end{aligned}\ee
where we used  the $\eps$-optimality of $\bar\theta^N_\eps$ for the last inequality. Next, we need to bound the terms $T_1$ and $T_2$. Using Lemma~\ref{lemma:U-Lipschitz} and recalling the fixed heterogeneity processes from \eqref{eq:convergence-ass-2} , we get
\be\label{eq:T_1}
T_1\leq L_U\sqrt{\|\bar a_{\bar\theta^N_W}^N-\bar a_{\bar\theta}\|_{\mathcal{A}^\infty}^2+\|\bar z_{\bar\theta^N_W}^N-\bar z_{\bar\theta}\|_{\mathcal{A}^\infty}^2+\|\bar\theta^N_W-\bar\theta\|_{\mathcal{A}^\infty}^2+\|\theta_{\operatorname{step}}^N-\theta\|_{\mathcal{A}^\infty}^2}
\ee
and 
\be\label{eq:T_2}
T_2\leq L_U \sqrt{\|\bar a_{\bar\theta^N_\eps}^N-\bar a_{\bar\theta^N_\eps}\|_{\mathcal{A}^\infty}^2+\|\bar z_{\bar\theta^N_\eps}^N-\bar z_{\bar\theta^N_\eps}\|_{\mathcal{A}^\infty}^2+\|\theta_{\operatorname{step}}^N-\theta\|_{\mathcal{A}^\infty}^2}.
\ee
By Proposition~\ref{prop:continuity}, for any $\hat\theta_1,\hat\theta_2\in\mathcal{A}^\infty$, there is a constant $C_0>0$ such that  
\be\begin{aligned}\label{eq:int-convergence-aux}
    \|\bar a^N_{\hat\theta_1}-\bar a_{\hat\theta_2}\|_{\mathcal{A}^\infty}&\leq C_0\left(\|\boldsymbol{W_{G^N}}-\boldsymbol{W}\|_{\operatorname{op}}+\|(\theta_{\operatorname{step}}^N+\hat\theta_1)-(\theta+\hat\theta_2)\|_{\mathcal{A}^\infty}\right)\\
    &\leq C_0\left(\sqrt{8\|{W_{G^N}}-W\|_\Box}+\|\hat\theta_1-\hat\theta_2\|_{\mathcal{A}^\infty}+\|\theta_{\operatorname{step}}^N-\theta\|_{\mathcal{A}^\infty}\right),\\
\end{aligned}\ee
where the second inequality follows from Lemma~\ref{lemma:relation-cut-op} and the triangle inequality. In a similar way, we obtain from the triangle inequality, the definition \eqref{eq:operator-norm} of the operator norm,  Lemma~\ref{lemma:B_theta}, Lemma~\ref{lemma:relation-cut-op}, and \eqref{eq:int-convergence-aux} that
\be\begin{aligned}\label{eq:int-convergence-aux2}
    \|\bar z^N_{\hat\theta_1}-\bar z_{\hat\theta_2}\|_{\mathcal{A}^\infty}&=\|\boldsymbol{W_{G^N}}\bar a^N_{\hat\theta_1}-\boldsymbol{W}\bar a_{\hat\theta_2}\|_{\mathcal{A}^\infty}\\
    &\leq \|\boldsymbol{W_{G^N}}\bar a^N_{\hat\theta_1}-\boldsymbol{W}\bar a^N_{\hat\theta_1}\|_{\mathcal{A}^\infty}+\|\boldsymbol{W}\bar a^N_{\hat\theta_1}-\boldsymbol{W}\bar a_{\hat\theta_2}\|_{\mathcal{A}^\infty} \\
    &\leq \|\boldsymbol{W_{G^N}}-\boldsymbol{W}\|_{\operatorname{op}}\|\bar a^N_{\hat\theta_1}\|_{\mathcal{A}^\infty}+\|\boldsymbol{W}\|_{\operatorname{op}}\|\bar a^N_{\hat\theta_1}-\bar a_{\hat\theta_2}\|_{\mathcal{A}^\infty}\\
    &\leq \sqrt{8\|{W_{G^N}}-W\|_\Box}M\\
    &\quad+\lambda_1(\boldsymbol{W})C_0\left(\sqrt{8\|{W_{G^N}}-W\|_\Box}+\|\hat\theta_1-\hat\theta_2\|_{\mathcal{A}^\infty}+\|\theta_{\operatorname{step}}^N-\theta\|_{\mathcal{A}^\infty}\right)\\
    &\leq C_1\left(\sqrt{8\|{W_{G^N}}-W\|_\Box}+\|\hat\theta_1-\hat\theta_2\|_{\mathcal{A}^\infty}+\|\theta_{\operatorname{step}}^N-\theta\|_{\mathcal{A}^\infty}\right),\\    
\end{aligned}\ee
where the last inequality follows from defining the constant $C_1:=\max(M,\lambda_1(\boldsymbol{W})C_0)$. Furthermore, we need to bound $\|\bar\theta^N_W-\bar\theta\|_{\mathcal{A}^\infty}$. For this, consider the step function process $\bar\theta^N:=((\bar\theta(\frac{i}{N}))_{i=1}^N)_{\operatorname{step}}$ corresponding to $(\bar\theta(\frac{i}{N}))_{i=1}^N$. Then, by assumption, $\bar\theta$ satisfies $\bar\theta^x\in \mathcal{A}_{\bar\theta_{\operatorname{max}}}$ for all $x\in[0,1]$ and a constant $\bar\theta_{\operatorname{max}}$, and there exist $L_{\bar\theta}\in\R$ and a finite partition $\{I_1,\ldots,I_{K_{\bar\theta}+1}\}$ of $[0,1]$ such that for any $1\leq k\leq K_{\bar\theta}+1$ and $x,x'\in I_k$ it holds that $\|\bar\theta^x-\bar\theta^{x'}\|_{\mathcal{A}}\leq L_{\bar\theta}|x-x'|$.  
Thus, by the triangle inequality, we get that
\be\begin{aligned}\label{eq:int-convergence-aux3}
    \|\bar\theta^N-\bar\theta\|_{\mathcal{A}^\infty}^2&=\int_0^1\|\bar\theta^N(x)-\bar\theta(x)\|_\mathcal{A}^2dx\\
    &=\sum_{i=1}^N\int_{\mathcal{P}_i^N}\|\bar\theta(\frac{i}{N})-\bar\theta(x)\|_\mathcal{A}^2dx\\
    &\leq \left(\sum_{i=1}^N\int_{\mathcal{P}_i^N}L_{\bar\theta}^2\big|\frac{i}{N}-x\big|^2dx\right)+K_{\bar\theta}\frac{1}{N}(2\bar \theta_{\operatorname{max}}^2+2\bar\theta_{\operatorname{max}}^2)\\
    &\leq \frac{L_{\bar\theta}^2}{N^2}+\frac{4K_{\bar\theta}\bar\theta_{\operatorname{max}}^2}{N}.
\end{aligned}\ee
Moreover, by definition \eqref{eq:approx-intervention} of the approximate intervention, we have
\be\label{eq:theta_W^N}
\bar\theta_W^N=\bar\theta^N\frac{\|\bar\theta\|_{\mathcal{A}^\infty}}{\|\bar\theta^N\|_{\mathcal{A}^\infty}},
\ee
with $\bar\theta_W^N=0$ if $\bar\theta^N=0$. Hence, by the triangle inequality and by plugging in \eqref{eq:theta_W^N}, we get for $\bar\theta^N\neq 0$ that
\be\begin{aligned}\label{eq:aux-ineq}
    \|\bar\theta_W^N-\bar\theta\|_{\mathcal{A}^\infty}&\leq \|\bar\theta_W^N-\bar\theta^N\|_{\mathcal{A}^\infty} +\|\bar\theta^N-\bar\theta\|_{\mathcal{A}^\infty}\\
    &\leq \left|\frac{\|\bar\theta\|_{\mathcal{A}^\infty}}{\|\bar\theta^N\|_{\mathcal{A}^\infty}}-1\right|\|\bar\theta^N\|_{\mathcal{A}^\infty}+\|\bar\theta^N-\bar\theta\|_{\mathcal{A}^\infty}\\
    &=\left|\|\bar\theta\|_{\mathcal{A}^\infty}-\|\bar\theta^N\|_{\mathcal{A}^\infty}\right|+\|\bar\theta^N-\bar\theta\|_{\mathcal{A}^\infty}\\
    &\leq 2\|\bar\theta^N-\bar\theta\|_{\mathcal{A}^\infty},
\end{aligned}\ee
where the last inequality follows from the lower triangle inequality. For $\bar\theta_W^N=0=\bar\theta^N$, inequality \eqref{eq:aux-ineq} holds as well. Finally, combining \eqref{eq:T_1},  \eqref{eq:int-convergence-aux}, \eqref{eq:int-convergence-aux2}, \eqref{eq:aux-ineq} we obtain
\be\begin{aligned}\label{eq:T_1-bound}
T_1&\leq L_U\bigg((C_0^2+C_1^2)\left(\sqrt{8\|{W_{G^N}}-W\|_\Box}+2\|\bar\theta^N-\bar\theta\|_{\mathcal{A}^\infty}+\|\theta_{\operatorname{step}}^N-\theta\|_{\mathcal{A}^\infty}\right)^2\\
&\quad+4\|\bar\theta^N-\bar\theta\|_{\mathcal{A}^\infty}^2+\|\theta_{\operatorname{step}}^N-\theta\|_{\mathcal{A}^\infty}^2\bigg)^\frac{1}{2}\\
&=\mathcal{O}\left(\|{W_{G^N}}-W\|_\Box^\frac{1}{2}\vee N^{-\frac{1}{2}}\vee\|\theta_{\operatorname{step}}^N-\theta\|_{\mathcal{A}^\infty}\right),
\end{aligned}\ee
where the equality follows from \eqref{eq:int-convergence-aux3}. Analogously, we get get from \eqref{eq:T_2},  \eqref{eq:int-convergence-aux}, \eqref{eq:int-convergence-aux2}, \eqref{eq:int-convergence-aux3}, \eqref{eq:aux-ineq} that 
\be\begin{aligned}\label{eq:T_2-bound}
T_2=\mathcal{O}\left(N^{-\frac{1}{2}}\vee\|\theta_{\operatorname{step}}^N-\theta\|_{\mathcal{A}^\infty}\right).
\end{aligned}\ee
Now it follows from \eqref{eq:T_opt}, \eqref{eq:T_1-bound}, \eqref{eq:T_2-bound} that 
\be
T^N_{\operatorname{opt}}-T^N(\bar\theta^N_W)-\eps=\mathcal{O}\left(\|{W_{G^N}}-W\|_\Box^\frac{1}{2}\vee\|\theta_{\operatorname{step}}^N-\theta\|_{\mathcal{A}^\infty}\vee N^{-\frac{1}{2}}\right),
\ee
for all $\eps>0$, so that letting $\eps\to 0$ completes the proof.
\end{proof}

\begin{proof}[Proof of Corollary~\ref{cor:intervention-convergence-sampled}]
The bound on the convergence rate follows from adopting the setting from Theorem~\ref{thm:convergence-sampled} and plugging the bounds from Lemma~\ref{lemma:avella} into Theorem~\ref{thm:intervention-convergence}, where $\kappa_N\equiv 1$ for the statement of the corollary. The almost sure convergence then follows as in the proof of Theorem~\ref{thm:convergence-sampled}.   
\end{proof}

\section{Proofs of the Results in Section~\ref{sec:LQ}}\label{sec:LQ-proofs} 
In this section, we extend the proof of Theorem~1 in \cite{galeotti2020targeting} to the dynamic setting. We start with the finite-player case.
\begin{proof}[Proof of Theorem~\ref{thm:LQ}]
Letting $\eta^N=\theta^N+\hat\theta^N$ and recalling Remark \ref{rem:property}, the goal is to solve the optimization problem
    \be\begin{aligned}
\max_{\eta^N\in\mathcal{A}^N}w\cdot \E\bigg[&\int_0^T (\bar{a}^N_t)^\top\bar{a}^N_tdt\bigg],\\
\textrm{s.t.}\quad & \bar{a}^N=(I^N-\frac{\beta}{N}G^N)^{-1}\eta^N,\\
    & \frac{1}{N}\|\eta^N-\theta^N\|^2_{\mathcal{A}^N}=\frac{1}{N}\|\hat{\theta}^N\|^2_{\mathcal{A}^N}\leq C_B.
\end{aligned}\ee
To this end, we reformulate the maximization problem by means of the basis of $\R^N$ given by the principal components of $G^N$. Then, using the fact that norms do not change under the orthogonal transformation $(U^N)^\top$ introduced before \eqref{eq:theta-LQ}, we get, 
$$
\frac{1}{N}\|\hat\theta^N\|_{\mathcal{A}^N}^2=\frac{1}{N}\E\bigg[\int_0^T\sum_{k=1}^N (\hat{\underline{\theta}}^N_{k,t})^2dt\bigg],
$$
and
$$
w\cdot \E\bigg[\int_0^T (\bar{a}^N_t)^\top\bar{a}^N_tdt\bigg]=w\cdot \|\underline{\bar{a}}^N\|_{\mathcal{A}^N}^2=w\cdot\E\bigg[\int_0^T\sum_{k=1}^N (\bar{\underline{a}}^N_{k,t})^2dt\bigg].
$$
Together with \eqref{eq:Nash-in-PC-LQ} and the definition of $\al^N_k$ in \eqref{eq:alpha-LQ}, the optimization problem can therefore be rewritten as 
\be\begin{aligned}\label{eq-LQ:IT-in-PC}
\max_{\underline{\eta}^N\in\mathcal{A}^N}w\cdot \E\bigg[&\int_0^T \sum_{k=1}^N\al^N_k(\underline{\eta}^N_{k,t})^2dt\bigg],\\
\textrm{s.t.}\quad
    &\frac{1}{N}\E\bigg[\int_0^T\sum_{k=1}^N (\hat{\underline{\theta}}^N_{k,t})^2dt\bigg]\leq C_B.
\end{aligned}\ee
Recalling that $\underline{\theta}^N_k\neq 0$, $\P \otimes dt$-a.e., for each $k$, let $\zeta^N:\Omega\times[0,T]\to\R^N$ be given by $\zeta^N_k=\underline{\hat\theta}^N_k / \underline{\theta}^N_k$. Then \eqref{eq-LQ:IT-in-PC} can be rewritten as
\be\begin{aligned}\label{eq-LQ:IT-with-x}
\max_{\zeta^N}w\cdot \E\bigg[&\int_0^T \sum_{k=1}^N\al^N_k(1+\zeta^N_{k,t})^2(\underline{\theta}_{k,t}^N)^2dt\bigg],\\
\textrm{s.t.}\quad
    &\frac{1}{N}\E\bigg[\int_0^T\sum_{k=1}^N (\underline{\theta}_{k,t}^N)^2(\zeta^N_{k,t})^2dt\bigg]\leq C_B.
\end{aligned}\ee
\textbf{Observations.} Given an optimal solution $\bar{\zeta}^N$, under Assumption~\ref{assum:property}, either $w<0$ and $\frac{1}{N}\|\underline{\theta}^N\|_{\mathcal{A}^N}^2> C_B$, or $w>0$ (see Remark \ref{rem:property}), hence it follows from \eqref{eq-LQ:IT-with-x} that there exist a $k_0\in\{1,\ldots,N\}$  and a set $A\in\mathcal{F}\otimes\mathcal{B}([0,T])$ with positive measure such that $\bar\zeta^N_{k_0}>-1$ on $A$. 
We can draw the following two conclusions.
\begin{itemize}
\item First, at the optimal solution $\bar\zeta^N$ (respectively $\underline{\bar{\eta}}^N$), the budget $C_B$ is fully used. Namely, suppose that it is not fully used. Then,  there exists a sufficiently small $0<\varepsilon<1$ such that an increase or decrease of $\bar\zeta^N_{k_0}$ on $A$ by $\varepsilon \cdot\min\{1,\bar\zeta^N_{k_0}+1\}$ still satisfies the budget constraint. Now if $w>0$ (respectively $w<0$), an increase (respectively decrease) will surely increase (respectively decrease) $(1+\bar\zeta^N_{k_0})^2$ on $A$, by the strict monotonicity of the function $\zeta\mapsto (1+\zeta)^2$ on $[-1,\infty)$.  Together with the facts that $(\underline{\theta}^N_{k_0})^2>0$, $\P\otimes dt$-a.e., and $\al^N_{k_0}>0$ this implies suboptimality and thus yields a contradiction.

\item Second, it holds for the optimal solution $\bar\zeta^N$ $\P \otimes dt$-a.e.~that
\be\label{eq-LQ:optimal-x-constraint}
\bar\zeta^N_{k}\geq 0,\quad  \textrm{for all } k,\textrm{ if }w>0,\quad \textrm{and}\quad \bar\zeta^N_{k}\in [-1,0],\quad  \textrm{for all }k,\textrm{ if }w<0.
\ee
Namely, if $w>0$ and $\bar\zeta^N_k<0$, the aggregate utility in the first part of \eqref{eq-LQ:IT-with-x} can be raised without any costs by flipping the sign of $\bar\zeta^N_k$ in the second part of \eqref{eq-LQ:IT-with-x}. If $w<0$, it follows analogously from \eqref{eq-LQ:IT-with-x} that flipping the sign of $\bar\zeta^N_{k}> 0$ increases the aggregate utility without any costs. Moreover, if $\bar\zeta^N_{k}< 1$, setting $\bar\zeta^N_{k}=-1$ increases the aggregate utility while simultaneously lowering the cost.
\end{itemize}
Next, notice that the Lagrangian $\mathcal{L}:\mathcal{A}^N\times\R\to\R$ corresponding to the optimization problem \eqref{eq-LQ:IT-in-PC} is given by 
$$
\mathcal{L}(\underline{\eta}^N,\mu)=w \cdot\E\bigg[\int_0^T \sum_{k=1}^N\al^N_k(\underline{\eta}_{k,t}^N)^2dt\bigg]
+\mu \bigg(NC_B-\E\bigg[\int_0^T\sum_{k=1}^N ( \underline{\eta}^N_{k,t}-\underline{\theta}^N_{k,t})^2dt\bigg]\bigg).
$$
Consider an arbitrary process $h\in\mathcal{A}$. For $k=1,\ldots,N$, define $h_k:=he_{k}\in\mathcal{A}^N$ with $e_{k}\in\R^N$ being the $k$-th unit vector and note that the corresponding G\^ateaux derivative of $\mathcal{L}$ is given by
\be \begin{aligned}\label{eq-LQ:gateaux}
    \langle  \nabla  \mathcal{L}(\underline{{\eta}}^N,\mu) , h_k  \rangle &= \lim_{\varepsilon \to 0} \frac{ \mathcal{L}(\underline{{\eta}}^N+\varepsilon h_{k},\mu) -  \mathcal{L}(\underline{{\eta}}^N,\mu)}{\varepsilon}\\
    &= \E\bigg[\int_0^T h_t \big(2w\al^N_k\underline{\eta}^N_{k,t}-2\mu( \underline{\eta}^N_{k,t}-\underline{\theta}^N_{k,t})\big)dt\bigg]\bigg).
\end{aligned}\ee
Recalling that the constraint is binding at the optimal intervention, the G\^ateaux derivative \eqref{eq-LQ:gateaux} must vanish at the optimum for any $h\in\mathcal{A}$, which implies 
\be\label{LQ-aux}
w\al^N_k\underline{\bar{\eta}}^N_{k,t}-\mu( \underline{\bar{\eta}}^N_{k,t}-\underline{\theta}^N_{k,t})=0,\quad \P \otimes dt \textrm{-a.e.}
\ee
Thus, it follows from \eqref{LQ-aux}, $w\al^N_k\neq 0$ (see \eqref{eq:alpha-LQ}), and $\underline{\theta}^N_k\neq 0$, $\P \otimes dt$-a.e. (see Assumption~\ref{assum:property}) that $\mu\neq w\al^N_k$, and therefore  that
\be\label{eq-LQ:optimal-bl}
\underline{\bar{\eta}}^N_{k,t}=\frac{\mu\underline{\theta}^N_{k,t}}{\mu-w\al^N_k},\quad \P \otimes dt \textrm{-a.e.}, \ \textrm{for all } k=1,...,N, 
\ee
which in turn implies
\be\label{eq-LQ:optimal-xl}
\bar\zeta^N_{k,t}=\frac{\frac{\mu\underline{\theta}^N_{k,t}}{\mu-w\al^N_k}-\underline{\theta}^N_{k,t}}{\underline{\theta}^N_{k,t}}=\frac{w\al^N_k}{\mu-w\al^N_k},\quad \P \otimes dt \textrm{-a.e.}, \ \textrm{for all } k=1,...,N. 
\ee 
To complete the proof, note that \eqref{eq-LQ:optimal-x-constraint} and \eqref{eq-LQ:optimal-xl} together imply that $\mu>w\al^N_k$ for all $k$, so that all denominators in \eqref{eq-LQ:optimal-xl} are positive. Plugging \eqref{eq-LQ:optimal-xl} into \eqref{eq-LQ:IT-with-x}, yields that $\mu$ is uniquely determined by the equation 
\be
C_B=\frac{1}{N}\E\bigg[\int_0^T\sum_{k=1}^N (\underline{\theta}^N_{k,t})^2\big(\frac{w\al^N_k}{\mu-w\al^N_k}\big)^2dt\bigg]=\frac{1}{N}\sum_{k=1}^N\big(\frac{w\al^N_k}{\mu-w\al^N_k}\big)^2\|\underline{\theta}^N_k\|_{\mathcal{A}}^2.
\ee
Here, letting $\mu_{min}:=\max_{k}(w\al^N_k)$, the existence and uniqueness follow from the fact that $C_B>0$ and that, since $\underline{\theta}^N_k\neq 0$, $\P \otimes dt$-a.e., for all $k$ (see Assumption~\ref{assum:property}), the function 
$$
\mu\mapsto\frac{1}{N}\sum_{k=1}^N\big(\frac{w\al^N_k}{\mu-w\al^N_k}\big)^2\|\underline{\theta}^N_k\|_{\mathcal{A}}^2,
$$
is strictly decreasing on the interval $(\mu_{min},\infty)$ with $\lim_{\mu\downarrow \mu_{min}}=\infty$ and $\lim_{\mu\to \infty}=0$. Finally, by Definition~\ref{eq-LQ:main-cosine-similarity}, the fact that the matrix $U^N$ introduced before \eqref{eq:theta-LQ} is orthonormal, and the definition of $\zeta_k^N$ above \eqref{eq-LQ:IT-with-x},
\be
\rho\big({\bar\theta^N},U^N_{\bullet k}\big)
=\frac{\langle\bar\theta^N,U^N_{\bullet k}\rangle_{\R^N}}{\|{\bar\theta^N}\|_{\R^N}\|U^N_{\bullet k}\|_{\R^N}}
=\frac{\underline{\bar\theta}^N_k}{\|{\bar\theta^N}\|_{\R^N}}
=\frac{\underline{\theta}^N_k \bar\zeta^N_k}{\|{\bar\theta^N}\|_{\R^N}}
=\frac{\|\theta^N\|_{\R^N}}{\|{\bar\theta^N}\|_{\R^N}}\rho\big(\theta^N,U^N_{\bullet k}\big)\bar\zeta^N_k.
\ee
\end{proof}
\begin{remark}
Note that due to the division in the definition of $\zeta$ above \eqref{eq-LQ:IT-with-x}, $\zeta$ is not contained in the Hilbert space $L^2(\Omega\times[0,T],\R^N)$ in general. Therefore, in contrast to \cite{galeotti2020targeting}, in order to circumvent technical intricacies, we solve the constrained optimization problem \eqref{eq-LQ:IT-in-PC} instead of \eqref{eq-LQ:IT-with-x}.
\end{remark}
We now prove the corresponding infinite-player version of Theorem~\ref{thm:LQ}, which follows along similar lines.
\begin{proof}[Proof of Theorem~\ref{thm:LQ-graphon}]
Letting $\eta=\theta+\hat\theta$, the goal is to solve the optimization problem
    \be\begin{aligned}
\max_{\eta\in\mathcal{A}^\infty}w\cdot \E\bigg[&\int_0^T \int_0^1(\bar{a}^x_t)^2dxdt\bigg],\\
    \quad \textrm{s.t.}\quad & \bar{a}=(\boldsymbol{I-\beta\boldsymbol{W}})^{-1}\eta, \\
    & \|\eta-\theta\|^2_{\mathcal{A}^\infty}=\|\hat{\theta}\|^2_{\mathcal{A}^\infty}\leq C_B.
\end{aligned}\ee
For this, we reformulate the maximization problem by means of the spectral decomposition of $\boldsymbol{W}=\boldsymbol{U}^*\boldsymbol{M}\boldsymbol{U}$ introduced after \eqref{eq:graphon-int-problem-LQ}. Using the fact that norms do not change under the unitary  operator $\boldsymbol{U}$,
$$
\|\hat{\theta}\|^2_{\mathcal{A}^\infty}=\E\bigg[\int_0^T\int_0^1 (\hat{\theta}_t^x)^2dxdt\bigg]=\E\bigg[\int_0^T \|\underline{\hat{\theta}}_t\|_X^2 dt\bigg],
$$
and
$$
w\cdot\E\bigg[\int_0^T\int_0^1 (\bar{a}_t^x)^2dxdt\bigg]=w\cdot\E\bigg[\int_0^T \|\bar{\underline{a}}_t\|_X^2 dt\bigg]=w\cdot\E\bigg[\int_0^T \sum_{\lambda\in\sigma(\boldsymbol{W})}\|\bar{\underline{a}}_t(\lambda)\|_{\R^{m(\lambda)}}^2 dt\bigg].
$$
Define $\mathcal{B}^\infty:=\mathcal{A}\otimes X$ with the standard inner product. Together with \eqref{eq-LQ:Nash-in-PC-graphon} and the definition of the scalars $\al_\lambda$ above, the optimization problem can therefore be rewritten as 
\be\begin{aligned}\label{eq-LQ:IT-in-PC-graphon}
\max_{\underline{\eta}\in\mathcal{B}^\infty}w \cdot\E\bigg[&\int_0^T \sum_{\lambda\in\sigma(\boldsymbol{W})}\al_\lambda\|\underline{\eta}_t(\lambda)\|_{\R^{m(\lambda)}}^2dt\bigg],\\
\textrm{s.t.}\quad
    &\|\underline{\hat{\theta}}\|_{\mathcal{B}^\infty}^2=\E\bigg[\int_0^T \|\underline{\eta}_t-\underline{\theta}_t\|_X^2 dt\bigg]\leq C_B.
\end{aligned}\ee
Recalling the last part of Assumption~\ref{assum:property-graphon}, let $\zeta$ be given by $\smash{\zeta(\lambda)_j=\underline{\hat{\theta}}(\lambda)_j / \underline{\theta}(\lambda)_j}$ for $j=1,\ldots,m(\lambda)$ and $\lambda\in\sigma(\boldsymbol{W})$. Then \eqref{eq-LQ:IT-in-PC-graphon} can be rewritten as
\be\begin{aligned}\label{eq-LQ:IT-with-z-graphon}
\max_{\zeta}w\cdot \E\bigg[&\int_0^T \sum_{\lambda\in\sigma(\boldsymbol{W})}\al_\lambda\sum_{j=1}^{m(\lambda)}(1+\zeta_t(\lambda)_j)^2\underline{\theta}_t(\lambda)_j^2dt\bigg],\\
\textrm{s.t.}\quad
    &\E\bigg[\int_0^T\sum_{\lambda\in\sigma(\boldsymbol{W})} \sum_{j=1}^{m(\lambda)}\underline{\theta}_t(\lambda)_j^2\zeta_t(\lambda)_j^2 dt\bigg]\leq C_B.
\end{aligned}\ee
\textbf{Observations.} Given an optimal solution $\bar{\zeta}$, under Assumption~\ref{assum:property-graphon}, either $w<0$ and $\|\underline{\theta}\|_{\mathcal{B}^\infty}^2> C_B$, or $w>0$ (see Remark \ref{rem:property-graphon}), hence it follows from \eqref{eq-LQ:IT-with-z-graphon} that there exist $\lambda\in\sigma(\boldsymbol{W})$, $j\in\{1,\ldots,m(\lambda)\}$, and a set $A\in\mathcal{F}\otimes\mathcal{B}([0,T])$ with positive measure such that $\zeta(\lambda)_j>-1$ on $A$. We can draw the following two conclusions.
\begin{itemize}
\item First, at the optimal solution $\bar{\zeta}$ (respectively~$\bar{\underline{\eta}}$), the budget $C_B$ is fully used. Namely, suppose that it is not fully used. Then, there exists a sufficiently small $0<\varepsilon<1$ such that an increase or decrease of $\bar{\zeta}(\lambda)_j$ on $A$ by $\varepsilon \cdot\min\{1,\bar{\zeta}(\lambda)_j+1\}$ still satisfies the budget constraint. Now if $w>0$ (respectively~$w<0$), an increase (respectively~decrease) will surely increase (respectively~decrease) $(1+\bar{\zeta}(\lambda)_j)^2$ on $A$, by the strict monotonicity of the function $\zeta\mapsto (1+\zeta)^2$ on $[-1,\infty)$.  Together with the facts that $\underline{\theta}(\lambda)_j^2>0$, $\P\otimes dt$-a.e., and $\al_\lambda>0$ this implies suboptimality and thus yields a contradiction.

\item Second, it holds for the optimal solution $\bar{\zeta}$ $\P \otimes dt$-a.e.~that 
\be\label{eq-LQ:optimal-z-constraint}
\bar{\zeta}(\lambda)_j\geq 0\ \textrm{ for all } \lambda,j,\textrm{ if }w>0,\quad \textrm{and}\quad \bar{\zeta}(\lambda)_j\in [-1,0]\ \textrm{ for all } \lambda,j,\textrm{ if }w<0.
\ee
Namely, if $w>0$ and $\bar{\zeta}(\lambda)_j<0$, the aggregate utility in the first part of \eqref{eq-LQ:IT-with-z-graphon} can be raised without any costs by flipping the sign of $\bar{\zeta}(\lambda)_j$ in the second part of \eqref{eq-LQ:IT-with-z-graphon}. If $w<0$, it follows analogously from \eqref{eq-LQ:IT-with-z-graphon} that flipping the sign of $\bar{\zeta}(\lambda)_j> 0$ increases the aggregate utility without any costs. Moreover, if $\bar{\zeta}(\lambda)_j< 1$, setting $\bar{\zeta}(\lambda)_j=-1$ increases the aggregate utility while simultaneously lowering the cost.
\end{itemize}

Next, notice that the Lagrangian $\mathcal{L}:\mathcal{B}^\infty\times\R\to\R$ corresponding to the optimization problem \eqref{eq-LQ:IT-in-PC-graphon} is given by 
$$
\mathcal{L}(\underline{\eta},\mu)=w \cdot\E\bigg[\int_0^T \hspace{-2mm}\sum_{\lambda\in\sigma(\boldsymbol{W})}\al_\lambda\|\underline{\eta}_t(\lambda)\|_{\R^{m(\lambda)}}^2dt\bigg]
+\mu \bigg(C_B-\E\bigg[\int_0^T \hspace{-2mm}\sum_{\lambda\in\sigma(\boldsymbol{W})}\hspace{-2mm}\|\underline{\eta}_t(\lambda)-\underline{\theta}_t(\lambda)\|_{\R^{m(\lambda)}}^2dt\bigg]\bigg).
$$
Consider an arbitrary process $h\in\mathcal{A}$. For $\lambda\in {\sigma(\boldsymbol{W})}$ and $ j\in\{1,\ldots, m(\lambda)\}$, define $h^{\lambda, j}\in\mathcal{B}^\infty$ by
$$
h^{\lambda, j}(\tilde\lambda)_{\tilde j}=\begin{cases}
    h,\quad \textrm{for }(\tilde\lambda,\tilde j)=(\lambda,j),\\
    0,\quad \textrm{otherwise}.
\end{cases}
$$
Note that the corresponding G\^ateaux derivative of $\mathcal{L}$ is given by
\be \begin{aligned}\label{eq-LQ:gateaux2}
    \langle  \nabla  \mathcal{L}(\underline{\eta},\mu) , h^{\lambda,j}  \rangle &= \lim_{\varepsilon \to 0} \frac{ \mathcal{L}(\underline{\eta}+\varepsilon h^{\lambda,j},\mu) -  \mathcal{L}(\underline{\eta},\mu)}{\varepsilon}\\
    &= \E\bigg[\int_0^T h_t \big(2w\al_\lambda\underline{\eta}_t(\lambda)_j-2\mu( \underline{\eta}_t(\lambda)_j-\underline{\theta}_{t}(\lambda)_j)\big)dt\bigg]\bigg).
\end{aligned}\ee
Recalling that the constraint is binding at the optimal intervention,  the G\^ateaux derivative \eqref{eq-LQ:gateaux2} must vanish at the optimum for any $h\in\mathcal{A}$, which implies 
\be\label{LQ-aux2}
w\al_\lambda\bar{\underline{\eta}}_t(\lambda)_j-\mu( \bar{\underline{\eta}}_t(\lambda)_j-\underline{\theta}_{t}(\lambda)_j)=0,\quad \P \otimes dt \textrm{-a.e.~for all }\lambda\in {\sigma(\boldsymbol{W})},\ j=1,\ldots, m(\lambda).
\ee
Thus, it follows from \eqref{LQ-aux2}, $w\al_\lambda\neq 0$ (see before \eqref{eq-LQ:Nash-in-PC-graphon}), and $\underline{\theta}(\lambda)_j\neq 0$, $\P \otimes dt$-a.e. (see Assumption~\ref{assum:property-graphon}) that $\mu\neq w\al_\lambda$, and therefore  that
\be
\bar{\underline{\eta}}_t(\lambda)_j=\frac{\mu\underline{\theta}_t(\lambda)_j}{\mu-w\al_\lambda},\quad \P \otimes dt \textrm{-a.e.~for all }\lambda\in {\sigma(\boldsymbol{W})},\ j=1,\ldots, m(\lambda),
\ee
which in turn implies
\be\label{eq-LQ:optimal-z-lambda}
\bar{\zeta}_t(\lambda)_j=\frac{\frac{\mu\underline{\theta}_t(\lambda)_j}{\mu-w\al_\lambda}-\underline{\theta}_t(\lambda)_j}{\underline{\theta}_t(\lambda)_j}=\frac{w\al_\lambda}{\mu-w\al_\lambda},\quad \P \otimes dt \textrm{-a.e.~for all }\lambda\in {\sigma(\boldsymbol{W})},\ j=1,\ldots, m(\lambda).
\ee
To complete the proof, note that \eqref{eq-LQ:optimal-z-constraint} and \eqref{eq-LQ:optimal-z-lambda} together imply that $\mu>w\al_\lambda$ for all $\lambda$, so that all denominators in \eqref{eq-LQ:optimal-z-lambda} are positive. Plugging \eqref{eq-LQ:optimal-z-lambda} into \eqref{eq-LQ:IT-with-z-graphon}, yields that $\mu$ is uniquely determined by the equation 
\be
C_B=\E\bigg[\int_0^T\sum_{\lambda\in\sigma(\boldsymbol{W})} \sum_{j=1}^{m(\lambda)}\underline{\theta}_t(\lambda)_j^2\big(\frac{w\al_\lambda}{\mu-w\al_\lambda}\big)^2 dt\bigg]=\sum_{\lambda\in\sigma(\boldsymbol{W})}\big(\frac{w\al_\lambda}{\mu-w\al_\lambda}\big)^2\big\|\|\underline{\theta}(\lambda)\|_{\R^{m(\lambda)}}\big\|_{\mathcal{A}}^2.
\ee
Here, letting $\mu_{min}:=\max_{\lambda}(w\al_\lambda)$, the existence and uniqueness follow from the fact that $C_B>0$ and that, since $\underline{\theta}(\lambda)_j\neq 0$, $\P \otimes dt$-a.e., for all $\lambda$ and $j$ (see Assumption~\ref{assum:property-graphon}), the function 
$$
\mu\mapsto\sum_{\lambda\in\sigma(\boldsymbol{W})}\big(\frac{w\al_\lambda}{\mu-w\al_\lambda}\big)^2\big\|\|\underline{\theta}(\lambda)\|_{\R^{m(\lambda)}}\big\|_{\mathcal{A}}^2,
$$
is strictly decreasing on the interval $(\mu_{min},\infty)$ with $\lim_{\mu\downarrow \mu_{min}}=\infty$ and $\lim_{\mu\to \infty}=0$. Finally, by Definition~\ref{eq:main-graphon-cosine-similarity-LQ}, the fact that the eigenfunctions $e_{\lambda,j}$ introduced after \eqref{eq:graphon-int-problem-LQ} are orthonormal, and the definition of $\zeta(\lambda)_j$ above \eqref{eq-LQ:IT-with-z-graphon},
\be
\rho\big(\bar{\theta},e_{\lambda,j}\big)
=\frac{\langle\bar{\theta},e_{\lambda,j}\rangle_{L^2}}{\|\bar{\theta}\|_{L^2}\|e_{\lambda,j}\|_{L^2}}
=\frac{\bar{\underline{\theta}}(\lambda)_j}{\|\bar{\theta}\|_{L^2}}
=\frac{\underline{\theta}(\lambda)_j\bar{\zeta}(\lambda)_j}{\|\bar{\theta}\|_{L^2}}
=\frac{\|\theta\|_{L^2}}{\|\bar{\theta}\|_{L^2}}\rho\big(\theta,e_{\lambda,j}\big)\bar{\zeta}(\lambda)_j.
\ee
\end{proof}

\end{document}